\newtheorem{lemma}{Lemma}
\newtheorem{proposition}{Proposition}
\newtheorem{theorem}{Theorem}
\newtheorem{corollary}{Corollary}
\theoremstyle{definition}
\newcommand{\commentout}[1]{}
\begin{document}

\thispagestyle{empty}

\centerline{\Large\bf Combinatorics and geometry of finite and infinite squaregraphs}

\vspace{6mm}

\centerline{{\sc Hans--J\"urgen Bandelt$^{\small 1}$,}  {\sc Victor
Chepoi$^{\small 2}$,} and {\sc David Eppstein}$^{\small 3}$}

\vspace{3mm}

\centerline{$^{1}$Dept. of Mathematics, University of Hamburg}
\centerline{Bundesstr. 55, D-20146 Hamburg, Germany}
\centerline{bandelt@math.uni-hamburg.de}

\medskip
\centerline{$^{2}$Laboratoire d'Informatique Fondamentale,}
\centerline{Universit\'e d'Aix-Marseille,}
\centerline{Facult\'e des Sciences de Luminy,} \centerline{F-13288
Marseille Cedex 9, France} \centerline{chepoi@lif.univ-mrs.fr}

\medskip
\centerline{$^3$ Computer Science Department,}
\centerline{University of California, Irvine} \centerline{Irvine  CA
92697-3435, USA} \centerline{eppstein@ics.uci.edu}

\vspace{15mm}
\begin{footnotesize} \noindent {\bf Abstract.} Squaregraphs were originally defined as finite plane graphs in which all inner faces are quadrilaterals (i.e.,
4-cycles) and all inner vertices (i.e., the vertices not incident
with the outer face) have degrees larger than three. The planar dual
of a finite squaregraph is determined by a triangle-free chord diagram of
the unit disk, which could alternatively be viewed as a
triangle-free line arrangement in the hyperbolic plane. This representation carries over to infinite plane graphs with finite vertex degrees
in which the balls are finite squaregraphs.
Algebraically, finite squaregraphs are median graphs for which the duals
are finite circular split systems. Hence squaregraphs are at the crosspoint
of two dualities, an algebraic and a geometric one,
and thus lend themselves to several combinatorial interpretations
and structural characterizations. With these and the 5-colorability
theorem for circle graphs at hand, we prove that
every squaregraph can be isometrically embedded into the Cartesian
product of five trees. This embedding result can also be extended to the infinite case without reference to an
embedding in the plane and without any cardinality restriction when formulated for median graphs free of cubes
and further finite obstructions.  Further, we exhibit a class
of squaregraphs that can be embedded into the product of three trees
and we characterize those squaregraphs that are embeddable into the product of just two trees. Finally, finite squaregraphs
enjoy a number of algorithmic features that do not extend to arbitrary median graphs. For instance, we show that
median-generating sets of finite squaregraphs can be computed in polynomial time, whereas, not unexpectedly, the
corresponding problem for median graphs turns out to be NP-hard.
\end{footnotesize}

\section{Avant-propos}
\label{sec:intro}

A \emph{finite squaregraph} is a finite plane graph $G=(V,E)$ (i.e., a finite graph drawn in the plane with no edges crossing) in which all inner faces are quadrilaterals and in which
all inner vertices have four or more incident edges. Three examples are shown in Figure~\ref{fig:examples}. In most
applications one is only interested in the squaregraphs that are
2-connected, and 2-connectivity is sometimes tacitly assumed in those contexts.
The smallest 2-connected squaregraph is then the 4-cycle or
quadrilateral. The blocks of an arbitrary squaregraph $G$ are either
2-connected or bridges (i.e., copies of the two-vertex complete graph
$K_2$).

\begin{figure}[t]
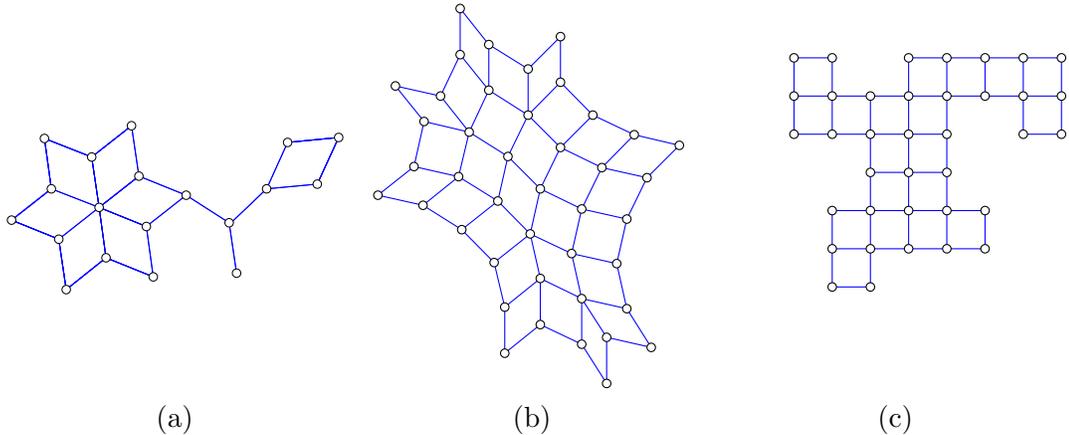

\begin{tabular}{ccc}
\raisebox{0.5in}{\includegraphics[scale=0.2]{non2c}}&
\multicolumn{2}{c}{\includegraphics[scale=0.4]{examples}} \\
(a)&\hspace{0.75in}(b)&\hspace{0.75in}(c)
\end{tabular}
\caption{Three example of squaregraphs: (a) a squaregraph with three articulation points (also known as cut vertices); (b) a 2-connected squaregraph; (c) a polyomino.}
\label{fig:examples}
\end{figure}

Squaregraphs were first introduced in 1973 by Soltan, Zambitskii,
and Pris\v{a}caru \cite{SoZaPr} under the name ``graphs of class
$\mathcal K$''. In that paper several distance and structural
features of squaregraphs were investigated in order to solve the
median problem in this class of graphs; the authors showed that medians in squaregraphs can
be computed using the majority rule, just as in the case of trees.
Some properties of squaregraphs established in \cite{SoZaPr} are
also recycled in the present paper (see Lemmas \ref{4corners} and~\ref{strip} below).
The name ``squaregraph''
was coined in \cite{ChDrVa,ChFaVa}, where also related natural
classes of plane graphs were studied. Squaregraphs include and generalize
the \emph{polyominoes}~\cite{Go} formed by surrounding a region in a square
grid by a simple cycle (Figure~\ref{fig:examples}c). The
local constraint on the degrees of inner vertices entails that the
squaregraphs constitute one of the basic classes of face-regular
plane graphs with combinatorial nonpositive curvature \cite{BrHa}.
Squaregraphs have a plethora of metric and structural properties,
some of which follow from the fact (see below) that they are median
graphs and thus partial cubes (i.e., isometric subgraphs of
hypercubes). Median graphs and, more generally, partial cubes
represent two key classes of graphs in metric graph theory, which
occur in various areas and applications, and have been
(re-)discovered many times and under different guises
\cite{BaCh_survey,ChDrHa,EppFaOv}.

From an algorithmic point of view, squaregraphs constitute an
intermediate class of graphs forming a generalization of trees
and a special case of arbitrary
cube-free median graphs; like trees, squaregraphs admit linear or near-linear
time solutions to a number of problems that are not applicable to broader classes
of graphs. For example, the papers
\cite{ChDrVa,ChFaVa} present linear time algorithms for solving
diameter, center, and median problems in squaregraphs, which would
not necessarily carry over to all cube-free median graphs. More
recently, the embedding result of \cite{BaChEpp} was used in
\cite{ChFeGoVa}  to design a self-stabilizing distributed algorithm
for the median problem on even squaregraphs (i.e., squaregraphs in
which all inner vertices have even degrees). The paper
\cite{ChDrVa_CAT} presents a compact representation of some plane
graphs of combinatorial nonpositive curvature, containing
squaregraphs as a subclass, allowing one to answer distance and routing
queries in a fast way.

Our paper is organized as follows. The next section features some aspects of the algebraic
duality for (not necessarily finite) median graphs. In particular, every median graph
can be recovered from the traces of the convex splits (i.e., pairs of complementary halfspaces)
on any median-generating subset via a kind of conditional ``Hellyfication''.
 In Sections~\ref{sec:circular-split} and~\ref{sec:smallgen}, we will then see
that there are canonical choices for median-generating sets in the case of a finite squaregraph
that express their cyclic structure (e.g. by taking the boundary of the outer face).
In particular, these properties allow one to find a minimum-size median-generating set in polynomial time for finite squaregraphs,
whereas the corresponding problem for median graphs in general is NP-hard. In Section~\ref{sec:local-x}, squaregraphs are characterized among median graphs by forbidden configurations. This offers an ultimate generalization of squaregraphs to a particular class of infinite cube-free median graphs beyond plane graphs. Further, with these preliminaries out of the way, Section~\ref{sec:line-arrangements} introduces a class of
infinite squaregraphs and shows that they can be described in an equivalent way that does not presuppose a plane embedding.
As highlighted in the same section, the cyclic feature of squaregraphs is also reflected by
geometric duality, whereby any squaregraph can be interpreted as the dual of
a triangle-free chord diagram in the plane. In the Klein model of the hyperbolic plane
one can then extend this relationship to infinite squaregraphs as well.
 Section~\ref{sec:tree-products}
deals with isometric embedding into Cartesian products of trees and shows that five trees suffice for all squaregraphs.
We also display the chord diagram and its dual
associated with Ageev's example showing that there are finite squaregraphs that cannot be
embedded into the Cartesian product of just four trees. The proofs of all propositions and
the three theorems are deferred to Sections \ref{proofs:split-to-median}--\ref{proofs:tree-products}. The final section gives a brief outlook to the metric properties of squaregraphs
and their geometric extensions to injective spaces, which will be studied in a follow-up paper.

\section{From split systems to median graphs}
\label{sec:split-to-median}

\begin{figure}[t]
\centering\includegraphics[height=1.25in]{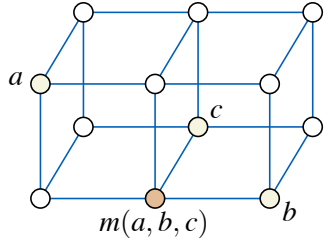}
\caption{The median of three vertices in a median graph.}
\label{fig:median}
\end{figure}

%\begin{figure}[t]
%\centering\includegraphics[width=2.5in]{median}
%\caption{The median $m$ of three vertices $a, b, c$ in a squaregraph. Three shortest paths between $a, b, c$
%passing through $m$ are highlighted in bold. Vertices of degree at most two are marked with shade and one
%boundary vertex of degree three is selected and marked by a cross.}
%\label{fig:median}
%\end{figure}

A \emph{median graph} \cite{MuSchr} is a graph in which every three
vertices $a,b,c$ have a unique \emph{median}, a vertex $m$ that
belongs to some shortest path between each two of the three vertices
$a$, $b$, and~$c$ (Figure~\ref{fig:median}). A \emph{median algebra} \cite{BaHe,Is},
generalizing the median operation in median graphs, consists of a
set of elements and a ternary commutative median operation
$m(a,b,c)$ on that set, such that $m(x,x,y)=x$ and $m(m(x,w,y),w,z)
= m(x,w,m(y,w,z))$ for all $w,x,y,z$. The \emph{median hull} of a subset
of a median algebra is the smallest median subalgebra including this subset.
In particular, if the entire median algebra is the median hull of a subset $S,$ then $S$ is
said to be a {\it median-generating} set of the algebra.  Later
(in Propositions~\ref{prop:median_hull} and
\ref{prop:median_hull_bis}) we shall investigate median-generating sets of squaregraphs.

As we now explain, median graphs may be characterized in terms of splits of their vertex sets.
A \emph{split} $\sigma=\{ A,B\}$ on a set $X$ is a partition of $X$
into two nonempty subsets $A$ and $B.$ A \emph{split system} on $X$ is any set of splits
on $X$. Two splits $\sigma_1=\{ A_1,B_1\}$ and
$\sigma_2=\{A_2,B_2\}$ are said to be \emph{incompatible} if all
four intersections $A_1\cap A_2, A_1\cap B_2, B_1\cap A_2,$ and
$B_1\cap B_2$ are nonempty, and are called \emph{compatible}
otherwise. In order to keep the set $X$ as
small as possible we may stipulate (whenever necessary) that
$\mathcal S$ (or the corresponding copair hypergraph)
\emph{separates the points,} i.e., for any two elements there exists
at least one split from $\mathcal S$ separating this pair. Split systems constitute a basic structure somewhat
analogous to hypergraphs. In fact, split systems can be turned into
\emph{copair hypergraphs} \cite{MuSchr,Mu}, also known as collections of
halfspaces \cite{ChDrHa}, which are hypergraphs in which the complement
of any hyperedge is also a hyperedge. In the context of geometric group theory, split
systems have been dubbed spaces with walls \cite{HaPa}. A split may be encoded
as a map from $X$ to $\{0,1\}$ (where the roles of $0$ and $1$ may be
interchanged), and split
systems may also be encoded by sets of these character
maps. These are the ``binary messages'' considered by
Isbell \cite{Is} in his duality theory for median algebras. In the
context of numerical taxonomy or phylogenetics one would speak of
binary data tables \cite{BaMaRi}.

\begin{figure}[t]
\begin{tabular}{cc}
\multicolumn{2}{c}{\includegraphics[width=3.5in]{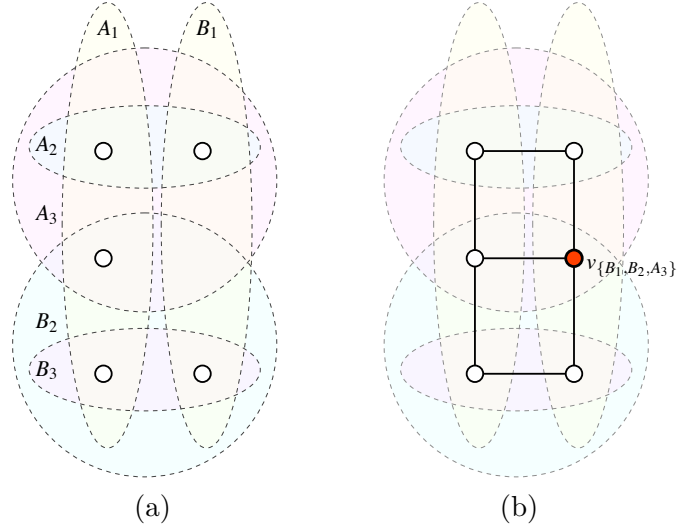}}\\
\hspace{0.65in}(a)&\hspace{0.85in}(b)\\
\end{tabular}
\caption{A split system $\{\{ A_1,B_1\}, \{ A_2,B_2\}, \{ A_3,B_3\}\}$ (left), and
its Hellyfication (right). The sets $B_1$, $B_2$, and $A_3$ form a maximal
pairwise intersecting family in the left split system, so in its Hellyfication
we add a new vertex $v_{\{B_1,B_2,A_3\}}$. The median graph formed by the resulting
Helly system is also shown.}
\label{fig:hellyfication}
\end{figure}

Every split system  $\mathcal S$  on a set $X$ has a natural
extension to a ``Helly'' split system on a larger set, via a construction performed
in \cite{Barth89, DrHuMo_Bune}. Here
``Helly'' refers to the Helly property of the associated copair
hypergraph $\mathcal H$: any pairwise intersecting family has a nonempty
intersection. The ``Hellyfication'' of a hypergraph ${\mathcal H}=
(X,{\mathcal E})$ extends the ground set $X$ and the hyperedges by
adjoining new elements that turn certain intersections of hyperedges
nonempty in order to gain the Helly property. Namely, for every
maximal pairwise intersecting set ${\mathcal F}$ of hyperedges with empty
intersection, add a new element $v_{\mathcal F}$ to $X$ and each
member of $\mathcal F$ (Figure~\ref{fig:hellyfication}). In the thus extended hypergraph $[{\mathcal
H}]$ with new ground set $[X]$ any two hyperedges intersect exactly when
their traces on $X$ intersect. Hence $[{\mathcal H}]$ is a Helly
hypergraph by construction, because every maximal set of pairwise
intersecting hyperedges has an element from $[X]$ in common. In the case
of a copair hypergraph associated with a split system on $X,$ the
maximal sets are transversals of the system, i.e., they comprise
exactly one part from each split. When $X$ is finite, the extended copair hypergraph
defines a median graph, with vertex set $[X]$ and with an edge between
any two vertices that are separated by exactly one split of the hypergraph; the proof of this fact is straightforward
from the main result of \cite{MuSchr} (for proofs, see also
\cite{Mu,BaHe}), which essentially involves the observation that a Helly copair
hypergraph determines its convex splits (the pairs of
complementary halfspaces of a median graph), and vice versa.

Every median graph yields a metric space via the shortest-path
length metric. This metric is determined by the unit-weighted convex
splits: the distance between any two vertices is the sum of the
weights of the splits separating them. If the weights are
arbitrarily positive, then one obtains a {\it median network}, which
is a discrete median metric space \cite{Ba_condorcet}. The notions
employed in the general metric context are then completely analogous
to the usual graph-theoretic notions: A subset $A$ of a metric space
$(X,d)$ is \emph{convex} if the (metric) \emph{interval} $I(u,v)=\{
x\in X: d(u,x)+d(x,v)=d(u,v)\}$ between any two points $u$ and $v$
of $A$ lies entirely in $A.$ The {\it convex hull} of a subset
$B$ of $X$  is the smallest convex set containing $B.$  A subset $Y$ of $X$ is \emph{gated} if
for every point $x\in X$ there exists a (unique) point $x'\in Y$
(the \emph{gate} for $x$ in $Y$) such that $x'\in I(x,y)$ for all
$y\in Y$ (cf. \cite{DrSch}). A split $\sigma=\{ A,B\}$ on $X$ is
called a \emph{gated split} (resp., \emph{convex split}) if the sets
$A$ and $B$ are gated (resp., convex); the parts $A$ and $B$  of a
convex or gated split $\{ A,B\}$ are called \emph{halfspaces}.
In median graphs and networks, all convex sets are gated \cite{VdV}. Moreover, it
is well known that in a median graph $G=(X,E)$ the splits
$\sigma(uv)=\{ W(u,v),W(v,u)\}$ separating the edges $uv\in E$ are
convex and therefore gated \cite{BaCh_survey,Mu,VdV}, where
$W(u,v)=\{ x\in X: d(u,x)<d(v,x)\}$ and $W(v,u)=X\setminus W(u,v).$ We denote
by ${\mathcal S}(G)$ the resulting collection of convex splits of a
median graph $G$ and by ${\mathcal H}(G)$ the corresponding copair
hypergraph of halfspaces  (halfspace hypergraph, for short).

The preceding observations extend to the discrete
infinite case. A helpful example in this context is given by the split system on the grid
$\mathbb{Z}^2$, which we would like to view as an example of an infinite squaregraph. The system
consists of the splits $\{ A_k,B_k\}$ and $\{ A_k',B_k'\}$, where $A_k=\{(x_1,x_2)\mid x_1\le k\}$ and
$A_k'=\{(x_1,x_2)\mid x_2\le k\}$ $(k\in \mathbb{Z})$. Now take the trace ${\mathcal S}={\mathcal S}(X)$
of this system on any subset $X$ of ${\mathbb Z}^2$ that meets all lines $A_{k+1}\cap B_k$
and $A'_{k+1}\cap B'_k$ and all quadrants $A_k\cap A'_k, A_k\cap B'_k, B_k\cap A'_k,$ and
$B_k\cap B'_k$ ($k\in {\mathbb Z}).$ We would like to recover $\mathbb{Z}^2$ as the Hellyfication
of ${\mathcal S}$, however there is a complication:
while certain maximal sets of pairwise intersecting split parts
correspond to the vertices of the grid, others would not, such as the set
$\{ A_k\cap X \mid k\in {\mathbb Z}\}\cup \{ A'_k\cap X\mid k\in \mathbb{Z}\}$. Thus,
to generate $\mathbb{Z}^2$ from ${\mathcal S}$, we cannot create new vertices for every
maximal pairwise-intersecting family (transversal of $\mathcal S$), but must keep some
and discard others. The key
feature discriminating between these two types of transversal
is ``anchoring'', to be described next.

We say that a split system $\mathcal S$ or a hypergraph $\mathcal H$ on a not
necessarily finite set $X$ is \emph{discrete} if any two
points of $X$  are separated by only finitely many splits or hyperedges, respectively; separation,
of course, means that the two points  are not
included in the same split part or hyperedge. A collection $\mathcal D$ of hyperedges in a  hypergraph
${\mathcal H}=(X,{\mathcal E})$ is said to be {\it anchored} if there exists a finite subset $Z$ of $X$
that has a nonempty intersection with every member of $\mathcal D$. In a discrete hypergraph, $\mathcal D$ is anchored exactly
when every point $x\in X$ belongs to all but finitely many
members of $\mathcal D$: if this is the case, the finite set $Z$ may be found by choosing any $x$
and one additional element from each set not containing $x$, while if $Z$ exists then for any $x$
there are only finitely many hyperedges separating
$\{x\}\cup Z$ by discreteness and all other hyperedges must contain $x$.
The Helly property for an infinite hypergraph is usually understood to require
that every {\it finite} pairwise intersecting family of hyperedges has a nonempty intersection. The halfspace
hypergraph of a median graph, however, satisfies a slightly stronger Helly property as we will see next.
We call a hypergraph {\it $^*$Helly} if every anchored family of hyperedges that intersect in pairs has a nonempty intersection.

\begin{proposition}
\label{prop:Helly*} A discrete hypergraph ${\mathcal H}=(X,{\mathcal E})$ is $^*$Helly if
and only if for every triplet $u,v,w$ of points the intersection of the hyperedges containing at least two of
$u,v,w$ is nonempty.
\end{proposition}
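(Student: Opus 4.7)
For the forward direction, given any triple $u, v, w$, I plan to apply $^*$Helly to the family $\mathcal{F}(u, v, w) := \{H \in \mathcal{E} : |H \cap \{u, v, w\}| \geq 2\}$. Any two members of $\mathcal{F}(u, v, w)$ each contain two of the three points, so by pigeonhole they share at least one of them and therefore intersect; moreover $\mathcal{F}(u, v, w)$ is anchored by the finite set $\{u, v, w\}$. A single invocation of $^*$Helly then yields the nonempty intersection demanded by the three-point condition.

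The reverse direction proceeds in two layers. First I will establish a finite Helly property --- every finite pairwise-intersecting subfamily of $\mathcal{E}$ has nonempty intersection --- by induction on the family size $n$. For $n = 3$, pick $a \in H_1 \cap H_2$, $b \in H_2 \cap H_3$, $c \in H_1 \cap H_3$; each $H_i$ then contains two of $\{a, b, c\}$, and the three-point condition yields a point common to $H_1, H_2, H_3$. For $n \geq 4$, the induction hypothesis provides $p_i \in \bigcap_{j \neq i} H_j$ for $i = 1, 2, 3$; one checks that every $H_j$ contains at least two of $\{p_1, p_2, p_3\}$ (the three designated $H_i$'s by construction, and the remaining $H_j$'s contain all three), and one more application of the three-point condition gives a common point.

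Next, for a general anchored pairwise-intersecting $\mathcal{D}$, the plan is to engineer a triple $(x, y, z)$ with $\mathcal{D} \subseteq \mathcal{F}(x, y, z)$, reducing the problem to a final application of the three-point condition. Pick any $x \in X$; by the discreteness characterization of anchoredness, $\mathcal{D}_x := \{H \in \mathcal{D} : x \notin H\}$ is finite. If $\mathcal{D}_x = \emptyset$, then $x \in \bigcap \mathcal{D}$; otherwise finite Helly delivers $y \in \bigcap \mathcal{D}_x$, so $\{x, y\}$ is a two-point anchor for $\mathcal{D}$. Since $\mathcal{D}_y$ is also finite by discreteness, a second application of finite Helly to the finite pairwise-intersecting family $\mathcal{D}_x \cup \mathcal{D}_y$ produces $z \in \bigcap(\mathcal{D}_x \cup \mathcal{D}_y)$. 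A short case analysis --- on whether $H \in \mathcal{D}$ belongs to $\mathcal{D}_x$, to $\mathcal{D}_y$, or contains both $x$ and $y$ --- confirms that every $H \in \mathcal{D}$ contains at least two of $\{x, y, z\}$, so $\mathcal{D} \subseteq \mathcal{F}(x, y, z)$ and the three-point condition finishes the argument.

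I expect the main obstacle to be spotting this final reduction: the finite-Helly induction is routine, but combining discreteness (which renders $\mathcal{D}_x$ and $\mathcal{D}_y$ finite) with a double use of finite Helly --- first to grow an arbitrary one-point ``anchor'' $x$ into a two-point anchor $\{x, y\}$, then into a triple $\{x, y, z\}$ that funnels every $H \in \mathcal{D}$ into a single three-point template --- is the essential idea.
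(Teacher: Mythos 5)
Your proposal is correct and follows essentially the same route as the paper: the forward direction is identical, and your reverse direction reproduces the paper's two-step bootstrap (pick $x$; use discreteness plus finite Helly on the finitely many members missing $x$ to get $y$; then apply finite Helly to the finitely many members separating $x$ and $y$ --- your $\mathcal{D}_x\cup\mathcal{D}_y$ is exactly that separating family --- to get $z$, so that every member of $\mathcal{D}$ contains two of $x,y,z$). The only difference is that you write out the finite-Helly induction from the triplet condition, whereas the paper cites it from Berge.
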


In the finite case this is a classical result on Helly hypergraphs~\cite{BeDu}; see also \cite[Corollary to Theorem 10]{Be}.
The convex sets of a median graph $G $ satisfy the condition of the preceding proposition
because the median of the three points in question guarantees that the intersection is not empty:

\begin{corollary} \label{Helly_convex} The hypergraph ${\mathcal C}(G)$ of all convex sets in a median graph
$G=(V,E)$ has the $^*$Helly property.
\end{corollary}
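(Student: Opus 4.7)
The plan is to invoke Proposition \ref{prop:Helly*}, but \emph{not} directly on $\mathcal{C}(G)$, which is generally not discrete: already in a two-way infinite path, infinitely many convex sets separate two adjacent vertices. Instead I would apply the proposition to the halfspace hypergraph $\mathcal{H}(G)$, which is discrete, and then bootstrap from halfspaces to arbitrary convex sets.

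For discreteness of $\mathcal{H}(G)$: the halfspaces separating two vertices $u,v$ are exactly the two sides of the $d(u,v)$ convex splits $\sigma(e)$ attached to the edges $e$ of any shortest $uv$-path, so only finitely many halfspaces separate them. To check the hypothesis of Proposition \ref{prop:Helly*} for $\mathcal{H}(G)$, pick any triple $u,v,w$ and let $m=m(u,v,w)$. Then $m\in I(u,v)\cap I(u,w)\cap I(v,w)$, so any halfspace $H$ containing two of $u,v,w$ contains (by convexity) the interval between that pair, and therefore $m\in H$. Hence $m$ witnesses that the intersection of all halfspaces containing at least two of $u,v,w$ is nonempty, and Proposition \ref{prop:Helly*} yields that $\mathcal{H}(G)$ is $^*$Helly.

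Next I would transfer this to $\mathcal{C}(G)$. Let $\mathcal{D}\subseteq\mathcal{C}(G)$ be anchored and pairwise intersecting with finite anchor $Z$, and put $\mathcal{D}^{*}=\{H\in\mathcal{H}(G):H\supseteq C\text{ for some }C\in\mathcal{D}\}$. If $H\supseteq C$ and $H'\supseteq C'$ with $C,C'\in\mathcal{D}$, then $H\cap H'\supseteq C\cap C'\neq\emptyset$; and each $H\in\mathcal{D}^{*}$ meets $Z$ because some $C\subseteq H$ does. So $\mathcal{D}^{*}$ is again anchored by $Z$ and pairwise intersecting, and the $^*$Helly property of $\mathcal{H}(G)$ gives $\bigcap\mathcal{D}^{*}\neq\emptyset$. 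Using the fact that every convex set in a median graph equals the intersection of the halfspaces containing it, one gets $\bigcap\mathcal{D}=\bigcap\mathcal{D}^{*}$, completing the proof.

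The main obstacle I anticipate is the ``convex equals intersection of halfspaces'' step in the possibly infinite setting; I would settle it by the gate construction: for any $x\notin C$, let $x'$ be the gate of $x$ in $C$ and $y$ the neighbor of $x$ on a shortest $xx'$-path. Then for every $z\in C$ the gate identity $d(z,x)=d(z,x')+d(x',x)$ combined with $d(z,y)\le d(z,x')+d(x',y)=d(z,x)-1$ shows $C\subseteq W(y,x)$, while $x\in W(x,y)$. Thus $W(y,x)$ is a halfspace containing $C$ but excluding $x$, which is the separating halfspace needed, and the rest of the argument is routine bookkeeping.
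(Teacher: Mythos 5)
Your proposal is correct and follows essentially the same route as the paper: the paper likewise observes that ${\mathcal C}(G)$ need not be discrete but ${\mathcal H}(G)$ is, verifies the triple condition of Proposition~\ref{prop:Helly*} for halfspaces via the median $m(u,v,w)$, and passes the $^*$Helly property from the family $\mathcal F$ of halfspaces containing members of $\mathcal D$ back to $\mathcal D$ using the fact that every convex set is an intersection of halfspaces. Your gate-based justification of that last fact is a correct filling-in of a step the paper takes for granted.
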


Note that the hypergraph ${\mathcal C}(G)$ is not discrete in general but the halfspace hypergraph
${\mathcal H}={\mathcal H}(G)$ is. For any anchored and pairwise intersecting family $\mathcal D$ of convex sets
consider the family $\mathcal F$ of all halfspaces that include members of ${\mathcal C}(G).$ The median of three vertices
$u,v,w$ belongs to every halfspace that contains at least two of them. Therefore, by
Proposition~\ref{prop:Helly*}, we have $\emptyset\ne\bigcap {\mathcal F}=\bigcap {\mathcal D},$ because every
convex set is an intersection of halfspaces. Then the finite case of
Corollary \ref{Helly_convex}, formulated by Evans \cite[Theorem 3.5]{Ev} as the ``Chinese Reminder Theorem", could also be
viewed as a direct consequence of \cite{BeDu}. This result was later rediscovered by \cite{Ro} (cf. Theorem 2.11 of \cite{Gu}).

\begin{proposition}
\label{prop:Mulder_Schrijver}
The following statements are equivalent for a discrete hypergraph ${\mathcal H}=(X,{\mathcal E}):$

\begin{itemize}
\item[(i)] $\mathcal  H$ is a maximal $^*$Helly copair hypergraph;
\item[(ii)]  $\mathcal H$ is a $^*$Helly copair hypergraph separating the points;
\item[(iii)]  $\mathcal H$ is the halfspace hypergraph of some median graph $G$ with vertex set $X.$
\end{itemize}
\end{proposition}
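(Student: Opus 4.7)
This is the discrete-infinite extension of the classical Mulder--Schrijver theorem, so I follow their implication cycle while replacing ``Helly'' and ``transversal'' throughout by ``$^*$Helly'' and ``anchored pairwise-intersecting family''. Concretely, the goal is to prove (iii) $\Rightarrow$ (ii), (iii) $\Rightarrow$ (i), (i) $\Rightarrow$ (ii), and (ii) $\Rightarrow$ (iii).

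The implication (iii) $\Rightarrow$ (ii) is immediate from the definitions: $\mathcal{H}(G)$ is a copair by construction, it is discrete because two vertices at graph distance $k$ are separated by exactly the $k$ edge-splits $\sigma(u_iu_{i+1})$ along any geodesic, and the $^*$Helly property is Corollary~\ref{Helly_convex} restricted to halfspaces (which are convex). Separation of points follows from taking any edge of a geodesic. The maximality implication (iii) $\Rightarrow$ (i) is nearly as direct: if a new copair $\{H, X\setminus H\}$ could be adjoined to $\mathcal{H}(G)$ while preserving $^*$Helly, both $H$ and $X\setminus H$ would have to be convex in $G$ (otherwise a geodesic witness together with the edge-splits along it yields an anchored pairwise-intersecting family with empty intersection); but every convex split of a median graph is already in $\mathcal{H}(G)$, a contradiction.

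For (i) $\Rightarrow$ (ii), I suppose $\mathcal{H}$ is maximal $^*$Helly copair yet two points $u\neq v$ are not separated. Then $\{u\}\notin\mathcal{E}$, and I check that adjoining the copair $\{\{u\},X\setminus\{u\}\}$ preserves $^*$Helly. The key observation is that, by the non-separation assumption, every old hyperedge containing $u$ contains $v$; so any anchored pairwise-intersecting family in the enlarged system that includes $X\setminus\{u\}$ still has $v$ in its intersection, contradicting maximality.

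The core of the proposition is the construction (ii) $\Rightarrow$ (iii): define $G=(X,E)$ by declaring $xy\in E$ exactly when a single copair of $\mathcal{H}$ separates $x$ from $y$. I would then establish in turn that (a) for any $x,y\in X$ the graph distance equals the finite number $|\mathcal{H}(x,y)|$ of copairs separating $x,y$, obtained by exhibiting a neighbor $x'$ of $x$ with $|\mathcal{H}(x',y)|=|\mathcal{H}(x,y)|-1$ via $^*$Helly applied to $\{B\}\cup\{H:x,y\in H\}$ for an appropriate separating copair $\{A,B\}$; (b) for every triple $x,y,z$ the family of hyperedges containing at least two of them is anchored by $\{x,y,z\}$ and pairwise intersecting, so $^*$Helly together with point separation yields a unique median $m$; and (c) every $H\in\mathcal{E}$ is a halfspace of $G$ (by a further gatedness $^*$Helly move) while every halfspace of $G$ is in $\mathcal{H}$ by the already-proved (iii) $\Rightarrow$ (i).

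\textbf{Main obstacle.} The serious technical step is the neighbor-finding argument in (a): the raw $^*$Helly application only produces a point $x'$ agreeing with $x$ on every halfspace that already contains both $x$ and $y$, but to guarantee that $x'$ differs from $x$ in a \emph{single} copair one must choose $\{A,B\}$ so that $B$ is inclusion-minimal among halfspaces containing $y$ but missing $x$; discreteness of $\mathcal{H}$ is what makes such a minimal choice available in the infinite setting. Once this point is handled, the remaining verifications follow the Mulder--Schrijver finite template more or less verbatim.
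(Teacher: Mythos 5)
Your proposal is correct in substance but takes a genuinely different route for the core implication (ii)$\Rightarrow$(iii). The paper works algebraically: it defines segments $u\circ v=\bigcap\{E\in{\mathcal E}\mid u,v\in E\}$, uses Proposition~\ref{prop:Helly*} to show that $u\circ v\cap v\circ w\cap w\circ u$ is a nonempty singleton, verifies Sholander's axioms to obtain a median algebra, and invokes discreteness only to exclude infinite descending chains of segments so that the algebra is the interval structure of a median graph. You instead build the graph directly (edges = pairs separated by exactly one copair) and prove that graph distance equals the number of separating copairs via a neighbor-finding step, in the Mulder--Schrijver/Djokovi\'c style. Your approach is more hands-on and makes the metric structure explicit from the start; the paper's is shorter because it outsources the combinatorics to Sholander's theorem. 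You correctly isolate the one place where the infinite setting bites in your route --- discreteness is exactly what guarantees an inclusion-minimal halfspace $B$ containing $y$ but not $x$, which is what forces the new point $x'$ to differ from $x$ in a single copair --- and your treatments of (i)$\Rightarrow$(ii) and (iii)$\Rightarrow$(i) coincide with the paper's (your (i)$\Rightarrow$(ii) even supplies the verification the paper omits).

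One small repair is needed in your step (c): the inclusion ${\mathcal H}(G)\subseteq{\mathcal E}$ does not follow from the maximality statement (iii)$\Rightarrow$(i), since maximality of ${\mathcal H}(G)$ only says it cannot be enlarged, not that every $^*$Helly copair hypergraph contained in it must equal it. The correct (and immediate) argument is the paper's: each halfspace of $G$ is $W(u,v)$ for some edge $uv$; since ${\mathcal E}$ separates points, some copair of ${\mathcal E}$ separates $u$ from $v$, and as all members of ${\mathcal E}$ have already been shown to be halfspaces, that copair must be $\{W(u,v),W(v,u)\}$ itself. With that substitution your outline closes up completely.
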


The preceding proposition generalizes the Mulder and Schrijver theorem \cite{MuSchr} from the finite to the infinite case.
Now, as in the finite case, we can expand a discrete copair hypergraph to one that is $^*$Helly.

\begin{proposition}\label{prop:Nica}\cite{Ni}
Let ${\mathcal H}=(X,{\mathcal E})$ be a discrete copair hypergraph separating the points.
Every anchored pairwise intersecting subset $\mathcal D$ of $\mathcal E$ can be expanded to a maximal anchored pairwise
intersecting subset $\mathcal F$ of $\mathcal E$; then $\mathcal F$ will contain one set from each complementary pair of sets in $\mathcal H$.
For every maximal anchored pairwise intersecting subset $\mathcal F$ of $\mathcal E$ that has an empty intersection, create a new element
$v_{\mathcal F}$, and include $v_{\mathcal F}$ as a member of each set in $\mathcal F$.
Then the resulting expanded hypergraph
$[{\mathcal H}]=([X],[{\mathcal E}])$ constitutes the halfspace hypergraph of a median graph with vertex set $[X]$ such that
$[X]$ is the median hull of $X$ and the trace of $[{\mathcal E}]$ on $X$ equals $\mathcal E$.
\end{proposition}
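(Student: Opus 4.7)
\smallskip
\noindent\emph{Proof plan.}
The plan is to reduce the entire statement to Proposition~\ref{prop:Mulder_Schrijver}: once $[\mathcal H]$ is shown to be a discrete $^*$Helly copair hypergraph separating the points of $[X]$, the median-graph structure on $[X]$ is automatic. Three tasks remain: (i)~produce the maximal anchored pairwise intersecting extensions $\mathcal F$ of $\mathcal D$; (ii)~verify that $[\mathcal H]$ is a discrete, point-separating copair hypergraph; (iii)~confirm the $^*$Helly property via the three-point criterion of Proposition~\ref{prop:Helly*}. The median-hull and trace claims will drop out of the construction.

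For (i), I would fix a finite anchor $Z$ of $\mathcal D$ and use discreteness to observe that only finitely many complementary pairs $\{A,B\}$ of $\mathcal H$ separate two points of $Z$, since each pair of points in $Z$ is separated by only finitely many hyperedges. For every pair that does \emph{not} separate $Z$ (the cofinitely many pairs), the anchor lies entirely in one side; I put that side into $\mathcal F$. This side meets every $D\in\mathcal D$ (as $D\cap Z\neq\emptyset$ and $Z$ lies in the chosen side), and keeps $\mathcal F$ anchored by $Z$ itself, whereas including the opposite side would be disjoint from $Z$ and hence break the anchor (and, incidentally, it cannot already belong to $\mathcal D$). For the finitely many pairs that do separate $Z$, a short pairwise-intersection argument --- two members $D_i,D_j\in\mathcal D$ with $D_i\subseteq B$ and $D_j\subseteq A$ would contradict $D_i\cap D_j\neq\emptyset$ --- shows that at least one side of each such pair meets every member of $\mathcal D$; I then appeal to the finite case of the proposition (the classical Mulder--Schrijver theorem applied to the finite substructure on these pairs and their witness points) to make coherent choices compatible with $\mathcal D$ and with the cofinite choices already made. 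The resulting $\mathcal F$ is a pairwise intersecting orientation, still anchored by $Z$ together with finitely many added witnesses, and maximal, since any further $E$ would coexist with its complement $E^c\in\mathcal F$ and force $E\cap E^c=\emptyset$.

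For (ii), every pair $\{A,B\}$ in $\mathcal E$ extends to the complementary pair $\{\tilde A,\tilde B\}$ in $[X]$, because by (i) each maximal $\mathcal F$ chooses exactly one side, so every added $v_\mathcal F$ is placed in precisely one of $\tilde A,\tilde B$. Separation of points is inherited within $X$; a new vertex $v_\mathcal F$ is separated from any other point of $[X]$ by any pair on which the respective orientations differ (such a pair exists since distinct orientations really differ). Discreteness of $[\mathcal H]$ follows from the alternative anchoring description: every $\mathcal F$ differs from each $\sigma_x=\{E\in\mathcal E:x\in E\}$ in only finitely many complementary pairs. For (iii), I would take $u,v,w\in[X]$ and consider the family $\mathcal M\subseteq\mathcal E$ of $E$ whose extension $\tilde E$ contains at least two of $u,v,w$; then $\mathcal M$ is pairwise intersecting by pigeonhole and anchored by the finite union of finite $X$-anchors of $u,v,w$ (the singleton $\{x\}$ if $x\in X$, and the given anchor of the corresponding $\mathcal F$ otherwise). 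By (i), $\mathcal M$ extends to a maximal $\mathcal F$; either $\bigcap\mathcal F$ meets $X$ --- furnishing an $X$-point in $\bigcap_{E\in\mathcal M}\tilde E$ --- or the construction of $[X]$ supplies $v_\mathcal F$ in that intersection.

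Propositions~\ref{prop:Helly*} and~\ref{prop:Mulder_Schrijver} then identify $[\mathcal H]$ as the halfspace hypergraph of a median graph on vertex set $[X]$. The trace equality $\tilde E\cap X=E$ is built into the definition of $[\mathcal E]$; and each new $v_\mathcal F$, being determined by its finite anchor $Z\subseteq X$, arises as the Hellyfication vertex of the finite restriction of $\mathcal H$ to witnesses from $Z$, hence lies in the finite median subalgebra of $[X]$ generated by $Z\subseteq X$. This shows that $[X]$ is the median hull of $X$. The main technical obstacle is the coherent choice of sides in (i) for the finitely many pairs that separate $Z$, handled by invoking the finite case of the proposition --- not a circular appeal, since that finite case reduces to the classical Mulder--Schrijver theorem used here as a black box.
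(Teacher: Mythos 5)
Your overall architecture matches the paper's: build the maximal anchored transversal $\mathcal F$, check that $[\mathcal H]$ is a discrete, point-separating $^*$Helly copair hypergraph, and invoke Proposition~\ref{prop:Mulder_Schrijver}. But the crux of your step (i) --- choosing sides coherently for the finitely many pairs that separate $Z$ --- is exactly where the argument stops being an argument. You correctly show that at least one side of each such pair meets every member of $\mathcal D\cup\{Z\}$, but you must also guarantee that the finitely many sides you pick intersect \emph{each other}, and the tool you invoke does not deliver this: the finite Mulder--Schrijver theorem (or finite Hellyfication) operates on a finite copair hypergraph of traces on a finite witness set, and no finite witness set can encode the constraint ``meets every member of the possibly infinite family $\mathcal D$''; nor does ``extend a pairwise intersecting family to a maximal one'' apply, since the collection of admissible sides is not itself presented as a pairwise intersecting family to be extended. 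The paper avoids the issue with a one-line global rule: fix $z\in Z$ and, for each complementary pair, take the side meeting every member of $\mathcal D\cup\{Z\}$, breaking ties in favour of the side containing $z$; then two chosen sides $A',A''$ cannot be disjoint, because the one missing $z$, say $A'$, was only eligible because $X\setminus A'$ misses some $D\in\mathcal D$, whence $A''\subseteq X\setminus A'$ would miss $D$ too. (A sequential greedy pass over the finitely many separating pairs, reusing your two-sides-cannot-both-fail argument against the growing family, would also work; but that is not what you wrote.)

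The second gap is the median-hull claim. You assert that $v_{\mathcal F}$ lies in the median subalgebra generated by its anchor $Z\subseteq X$; this is false. Take $X=\{a,b,c\}$ with the three splits $\{\{a\},\{b,c\}\}$, $\{\{b\},\{a,c\}\}$, $\{\{c\},\{a,b\}\}$: the transversal $\mathcal F=\{\{b,c\},\{a,c\},\{a,b\}\}$ has empty intersection and is anchored by $Z=\{a,b\}$, yet the resulting median graph is $K_{1,3}$ with $v_{\mathcal F}$ as its centre, and the median hull of $\{a,b\}$ is just $\{a,b\}$. The anchor controls discreteness, not generation. What is needed is that $v_{\mathcal F}$ lies in the median hull of all of $X$; the paper gets this by minimality (no proper subset $W$ with $X\subseteq W\subsetneq[X]$ carries a $^*$Helly trace, since dropping $v_{\mathcal F}$ leaves the anchored pairwise intersecting family $\mathcal F$ with empty intersection), and alternatively one can use the halfspace criterion for median closures quoted after Proposition~\ref{prop:median_hull_bis}: if $v_{\mathcal F}$ lies in two halfspaces $\tilde E_1\cap\tilde E_2$ of $[\mathcal H]$, then $E_1,E_2\in\mathcal F$, so $E_1\cap E_2\ne\emptyset$ already in $X$. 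Your remaining verifications (copair structure, separation, discreteness via agreement with principal ultrafilters, and the three-point $^*$Helly check) are sound and essentially those of the paper.
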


We refer to the extension of a discrete copair hypergraph $\mathcal H$ to $[{\mathcal H}]$ as the \emph{$^*$Hellyfication} of
$\mathcal H$. The maximal anchored pairwise intersecting collections were called almost
principal ultrafilters by Nica \cite{Ni}. His Theorem 4.1 (together with some information from its proof)
is rephrased here as Proposition~\ref{prop:Nica},  with a short proof
based on Proposition~\ref{prop:Mulder_Schrijver}. In the finite case, this result was well known; see \cite{DrHuMo_Bune}.

\section{From 2-compatible circular split systems to squaregraphs}
\label{sec:circular-split}

\begin{figure}[t]
\centering\includegraphics{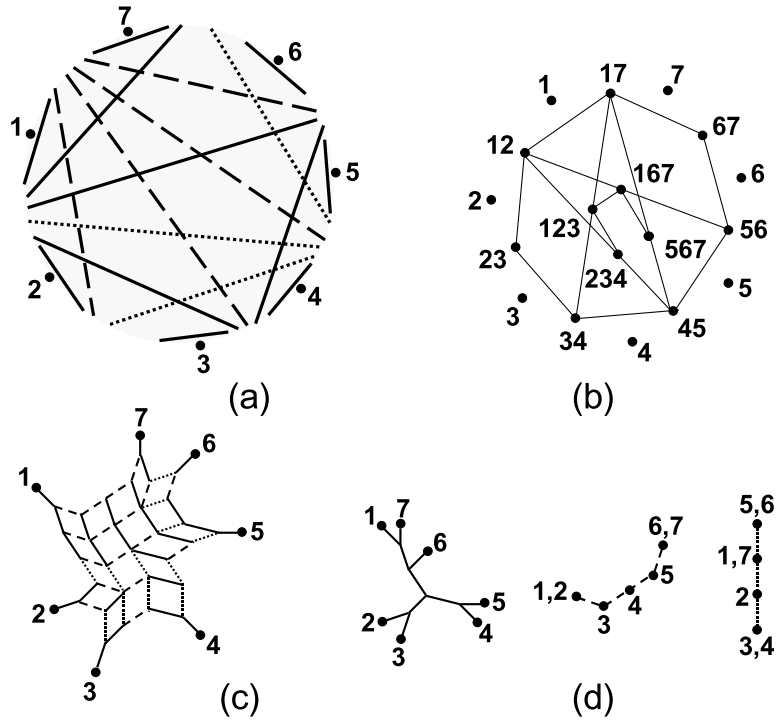}
\caption{(a) 3-Colored
system of 18 chords of a circle that separate points 1-7, where
chords of the same color (either unbroken, or broken, or stippled)
do not intersect, and (b) the corresponding circle graph, where
vertices correspond to the chords that are encoded by the points on
the smaller arcs they each bound. (c) Squaregraph associated with
the split system depicted in (a); every edge is marked with the
color of the split it crosses. (d) Trees supported by each color
class of splits/edges; the squaregraph canonically embeds into the
Cartesian product of these three trees.}
\label{fig:split-7-18}
\end{figure}

A split system $\mathcal S$ is called \emph{2-compatible}~\cite{DrKoMo}
if it does not contain any  three
pairwise incompatible
splits. A split system $\mathcal S$ on a cycle with vertex set $X$
is said to be \emph{circular} if for each split $\sigma=\{ A,B\}$ of
$\mathcal S$, the parts $A$ and $B$ constitute complementary paths
of this cycle. Circular split systems arise as split systems with
certain extremal properties \cite{BaDr,DrKoMo}. To give an example,
Figure~\ref{fig:split-7-18}a displays a circle with seven points highlighted forming the
set $X = \{ 1,2,3,4,5,6,7\}.$ The 18 chords indicate splits on $ X$
forming the system $\mathcal S.$ In this example no triplet of
splits would be pairwise incompatible. As a consequence, the median
graph with vertex set $[X]$ is composed of 4-cycles and bridges
(Figure~\ref{fig:split-7-18}c). Note that  18 is the largest possible
number of 2-compatible splits on a set with $n=7$ elements~\cite{DrKoMo}.

Given a system $\mathcal S$ of splits on $X,$ the {\it
incompatibility graph} Inc$({\mathcal S})$ of $\mathcal S$ has the
splits as vertices and pairs of incompatible splits as edges. Inc$({\mathcal S})$ is
triangle-free exactly when $\mathcal S$ is 2-compatible. In the
case of a circular split system, the incompatibility graph can be
regarded as the intersection graph of chords of a circle, which is
referred to as a \emph{circle graph}~\cite{JeTo}. The incompatibility graph of the split system depicted in
Figure~\ref{fig:split-7-18}a is shown in Figure~\ref{fig:split-7-18}b.
%If, in addition
%$\mathcal S$ is 2-compatible, then Inc$({\mathcal S})$ is
%triangle-free, and vice versa.

The boundary cycle of a finite 2-connected squaregraph plays a key
role in a succinct median description of squaregraphs, which is
expressed by the next propositions. An orientation of a cycle $X$ yields a ternary relation $\beta$ on $X$ where
$\beta (u,v,w)$ expresses that the directed path from $u$ to $w$ passes through $v.$ This relation is total,
asymmetric, and transitive, which can be formulated in terms of Huntington's axioms \cite{Hu}: for any four points $u, v, w, x$ of $X,$

		\begin{itemize}
         \item[] if $u,v,w$ are distinct, then $\beta (u,v,w)$ or $\beta (w,v,u),$
		\item[] $\beta (u,v,w)$ and $\beta (w,v,u)$ is impossible,
		\item[] $\beta(u,v,w)$ implies $\beta (v,w,u),$
		\item[] $\beta (u,v,w )$ and $\beta (u,w,x)$ imply $\beta (u,v,x).$
\end{itemize}

A ternary relation $\beta$ on an arbitrary set $X$ satisfying these axioms is called a {\it total cyclic order.}
It follows from this definition that only triplets of distinct points can be in the relation $\beta$ and that the reverse
(alias opposite) relation $\beta^{op}$ defined by $\beta^{op}(u,v,w)$ exactly when $\beta (w,v,u)$ is also a total cyclic
order. We say that a nonempty proper subset $A$ of $X$ is an arc if there are no four distinct points  $u,v\in A$  and
$x,y\in X\setminus A$  such that  $\beta (u,x,v), \beta (x,v,y), \beta (v,y,u),$ and $\beta (y,u,x)$  hold. If $A$ is an arc,
then so is $X\setminus A$ as well. In the finite case, total cyclic orders are just the orientations of cycles.
A split $\{ A,B\}$ is {\it circular} with respect to the total cyclic order
if its parts are arcs. A split system on $X$ is said to be {\it circular} if all its members are circular with respect to some
total cyclic order on $X.$ Thus, this definition extends the notion of circular split systems to the infinite case. To give a preliminary
definition, we say that
a median algebra $M$ is circular if the system of convex splits restricted to some median-generating
set $X$ of $M$ is circular. For example,
the Cartesian product of a finite path with a ray (one-way infinite path) has an infinite boundary (from which one can retrieve
the entire partial grid) that bears a total cyclic order. In a finite 2-connected squaregraph every convex split has a circular
trace on the oriented boundary cycle. The cyclic order
is uniquely determined
up to reversal by the requirement that the traces of the convex splits be circular; see the next proposition.
In contrast, for a finite tree $T,$ total cyclic orderings exist in abundance for which  the convex splits become
circular \cite{SeSt}. In fact, any planar representation of $T$ in the plane together with an arrangement of
non-crossing (pseudo-)lines such that the lines cross the edges of $T$ in a one-to-one manner (referred to
as a ``Meacham egg''; see Fig. 3 of \cite{BaDr86}) determines such a cyclic ordering, and vice versa.

\begin{proposition}
\label{prop:unique_cyclic_order}
Let $G=(V,E)$ be a finite squaregraph with boundary vertex set $C$.
Then the convex splits of $G$ restrict to circular splits on $C$. The corresponding cyclic order on $C$ is unique (up to reversal) if and only if either $\#V < 4$ or $G$ is 2-connected.
\end{proposition}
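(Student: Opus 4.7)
The proposition combines an existence statement --- that at least one cyclic order on $C$ makes every restricted convex split circular --- with a characterization of when such an order is unique up to reversal. My plan is to establish existence via the outer-face walk and the strip lemma; to handle the non-uniqueness direction by block rearrangement at articulation vertices; and to prove uniqueness in the remaining case by tracking the constraints that boundary-edge splits put on any admissible order. For existence, fix a plane embedding of $G$ and walk around the outer face. If $G$ is $2$-connected, this walk is a simple cycle giving a total cyclic order on $C$; otherwise one obtains such an order by keeping only a single occurrence of each articulation vertex in the walk. By Lemma~\ref{strip}, every $\Theta$-class of a finite squaregraph is the rung-set of a strip of consecutive quadrilateral faces whose two end-rungs lie on the outer boundary; since convex splits of a median graph correspond bijectively to $\Theta$-classes, excising such a strip cuts the disk into two regions whose intersections with the outer-face walk are complementary arcs. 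Hence the trace of every convex split on $C$ is a circular split.

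For the non-uniqueness direction, the case $\#V<4$ is immediate, since any two cyclic orders on a set of at most three elements coincide up to reversal. If $G$ is not $2$-connected, pick an articulation vertex $a$ where at least two nontrivial blocks $B_1,B_2,\dots$ meet. Every $4$-cycle of $G$, and hence every $\Theta$-class, lies inside a single block; consequently the trace on $C$ of a convex split depends only on the internal arrangement of the block containing its $\Theta$-class, while the other blocks contribute vertices that lie entirely on the side of $a$. Permuting two blocks at $a$ in the outer-face walk, or reversing the contribution $C\cap B$ of a single block $B$, therefore yields a second admissible cyclic order on $C$ that cannot coincide with the original even up to global reversal.

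For uniqueness, assume $G$ is $2$-connected with boundary vertices $c_1,\dots,c_n$ in boundary order and $n\ge 4$. Each $\Theta$-class contains exactly two boundary edges, so the boundary edges $e_i=c_ic_{i+1}$ are paired into $n/2$ pairs. The restricted split system on $C$ is $2$-compatible, because incompatibility on $C$ implies incompatibility on $V$, and $\Theta$-classes of a squaregraph have a triangle-free crossing graph. Given any cyclic order $\pi$ on $C$ for which every restricted convex split is circular, the trace $\sigma(e_i)|_C=\{c_{i+1},\dots,c_{k_i}\}\sqcup\{c_{k_i+1},\dots,c_i\}$ (with $c_{k_i}c_{k_i+1}$ the partner of $e_i$) must split $C$ into two $\pi$-arcs. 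Examining simultaneously the restricted splits of $e_{i-1},e_i,e_{i+1}$, I would argue that $c_i$ can lie in $\pi$ only strictly between $c_{i-1}$ and $c_{i+1}$: any other placement would produce three restricted splits that are pairwise incompatible in $\pi$, violating $2$-compatibility. Iterating around the boundary identifies $\pi$ with the boundary order up to reversal. The principal obstacle is this final local-to-global step; it must cope with small $n$ and with the possibility that $\Theta$-partners of consecutive boundary edges coincide or are themselves adjacent on the boundary, and I expect a short induction on $n$ by peeling off a strip whose two boundary edges are consecutive to close the argument cleanly.
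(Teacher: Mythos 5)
Your existence argument (outer-face walk plus Lemma~\ref{strip}, with articulation points collapsed to a single occurrence) and your non-uniqueness argument (permuting or reversing blocks at an articulation vertex) are in substance the same as the paper's, which does this by induction on the blocks via amalgamation; those parts are fine in outline. The genuine gap is in the uniqueness direction for 2-connected $G$, which is the substantive half of the proposition, and your proposed mechanism there cannot work as stated. Whether two splits of $C$ are incompatible is determined by the splits as partitions of $C$ alone; it does not depend on the cyclic order $\pi$. Since the system \emph{is} 2-compatible, no placement of $c_i$ in any candidate order $\pi$ can ``produce three restricted splits that are pairwise incompatible,'' so the contradiction you aim for is unreachable. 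What a bad placement actually violates is the arc condition for some \emph{individual} split, not 2-compatibility; and you yourself flag the local-to-global step as an unresolved obstacle, so the crux of the proof is missing.

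Here is what closes it, and it is close to what you set up before veering off. There are $k$ convex splits (one per $\Theta$-class, each containing exactly two boundary edges by Lemma~\ref{strip}) and $2k$ boundary vertices. In any admissible order $\pi$ each split, being an arc pair, separates exactly two $\pi$-adjacent pairs, giving $2k$ incidences; each of the $2k$ $\pi$-adjacent pairs is separated by at least one split (separation of points), so by pigeonhole each is separated by \emph{exactly} one. In a median graph the number of convex splits separating $u$ and $v$ equals $d(u,v)$, so every $\pi$-adjacent pair is an edge of $G$; hence $\pi$ is a Hamiltonian cycle of the subgraph $H$ induced by $C$. Now either invoke the uniqueness of the Hamiltonian cycle in a 2-connected outerplanar graph (the paper's shortcut), or argue as the paper does: a chord $ab$ of $C$ lying in $C^*$ would be a cut edge by Lemma~\ref{4corners}, splitting $G$ into an amalgam along $\{a,b\}$, and the split through the square of the far constituent adjacent to its boundary edge $uv$ fails to be an arc on $C^*$ --- a contradiction. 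Your alternative plan of ``peeling off a strip'' could perhaps be made to work, but it is not developed, and the 2-compatibility route should be abandoned.
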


\section{Small generating sets}
\label{sec:smallgen}

In the next section we will define a class of infinite squaregraphs, including the integer grid $\mathbb{Z}^2$, which may be completely devoid of a boundary (as $\mathbb{Z}^2$ is) or may at least be deficient in boundary vertices. This deficiency will manifest itself in the property that the median hull of the boundary is not the entire squaregraph. In contrast, the boundary of a finite squaregraph generates the entire squaregraph  as its median hull. In fact, an even smaller subset than the boundary suffices.

\begin{proposition}
\label{prop:median_hull}
Every finite squaregraph $G=(V,E)$ has at least
$\min\{ 4 , \#E\}$ vertices of degrees one or two. In a finite squaregraph, the vertices of degree at most three are exactly the articulation points of degree two or three and the endpoints of maximal convex paths. Let $X$ be a subset of $V$ that includes all vertices of degrees one and two in $G$ and includes at least one vertex of every maximal convex path $P$ of $G$ that does not pass through an articulation point of $G$. Then  every vertex of $G$ is the median of three vertices from $X$.
\end{proposition}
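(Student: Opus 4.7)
The proposition decomposes into three assertions, which I would tackle in order, spending most of the effort on the median-generation claim.

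For the lower bound on degree-$\leq 2$ vertices, when $\#E\le 3$ the graph is a tree on at most four vertices and the count is checked directly. For $\#E\ge 4$ the ``four corners'' statement of Lemma~\ref{4corners} handles any 2-connected block $B$ through the discrete Gauss--Bonnet identity $\sum_{v\in C_B}(3-\deg v)=4$ on its boundary cycle $C_B$ (each degree-$k$ boundary vertex contributes exterior angle $(3-k)\pi/2$, summing to $2\pi$), forcing at least four boundary vertices of degree two; a block--cut tree accumulation---leaf blocks contributing either pendants or four corners---carries the bound to $G$. For the characterization of degree-$\le 3$ vertices, inner vertices have degree $\ge 4$ by definition, so such a vertex must lie on the outer face: a degree-$1$ pendant is the endpoint of a bridge, a degree-$2$ boundary vertex has both its edges on the boundary and therefore terminates two maximal convex boundary paths, and a degree-$3$ boundary vertex either is an articulation or has exactly one interior edge, which initiates a maximal convex path ending at $v$. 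The converse follows because any endpoint of a maximal convex path $P$ cannot extend $P$ in the ``straight through'' direction, which limits its degree to at most three.

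For the median-generation statement, the plan is first to reduce to the 2-connected case by the block decomposition: articulation points are gated in any median graph, so any triple whose median within one block $B$ equals $v$ has the same median in $G$, and triples crossing blocks factor through the gate. Fix then a 2-connected squaregraph $G$ with boundary cycle $C$ carrying the canonical cyclic order of Proposition~\ref{prop:unique_cyclic_order}, and let $v\notin X$; necessarily $\deg(v)\ge 3$. Enumerate the edges at $v$ cyclically as $e_1=vw_1,\ldots,e_d=vw_d$, and write $S_i=W(w_i,v)$ for the halfspace opposite to $v$ across $e_i$. The median equation $v=\mathrm{med}(x_1,x_2,x_3)$ translates to $|S_i\cap\{x_1,x_2,x_3\}|\le 1$ for every $i$. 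Planarity combined with the $4$-face and cube-free structure of a squaregraph yields three geometric facts: (i)~each $S_i$ meets $C$ in an arc $A_i$; (ii)~the arcs $A_1,\ldots,A_d$ inherit the cyclic order of the edges at $v$; (iii)~$S_i\cap S_j\ne\emptyset$ exactly when $e_i,e_j$ bound a common $4$-face at $v$, equivalently when $i,j$ are cyclically consecutive, while any triple intersection $S_i\cap S_j\cap S_k$ is empty.

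The triple $(x_1,x_2,x_3)$ is then constructed by selecting three indices $i_1,i_2,i_3$ and placing $x_j$ in the relative interior of $A_{i_j}$, so that each $x_j$ belongs to $S_{i_j}$ and no other $S_i$. Every non-empty relative interior contains an $X$-vertex: a boundary corner of $A_i$ that is not a seam is in $X$ via the degree-$2$ clause, and whenever the relative interior collapses to a single degree-$3$ vertex $u$, that $u$ is necessarily an endpoint of some maximal convex path $P$ of $G$ that misses every articulation point, so the designated $X$-representative of $P$ furnishes a valid $X$-candidate in $S_{i_j}$ only. When $d\ge 6$ three pairwise non-adjacent indices are available, making the three $x_j$'s automatically mutually exclusive across halfspaces. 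The main obstacle I expect is the small-degree cases $d\in\{3,4,5\}$, where some chosen pair of indices must be cyclically consecutive; here the relative-interior restriction is essential to prevent an $x_j$ from leaking into a neighboring halfspace, and the delicate sub-case of a ``thin'' arc reduced to a pair of seam corners alone is exactly what the convex-path representative clause in the definition of $X$ is tailored to resolve. After handling every 2-connected case, the block-decomposition reduction lifts the result to the general squaregraph.
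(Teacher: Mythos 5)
Your reformulation of the median condition is a clean way to set up the problem and is correct: writing $S_i=W(w_i,v)$ for the halfspaces away from $v$, every halfspace missing $v$ is contained in some $S_i$, so $v=m(x_1,x_2,x_3)$ holds exactly when no $S_i$ contains two of the $x_j$; and your description of which $S_i$ meet (edges sharing an inner face at $v$, no triple intersections) is right, except that for a boundary vertex the two outermost edges are cyclically consecutive yet bound no common face. The sketches of the first two assertions follow the paper's Euler-formula count and far-vertex lemma in spirit, though your boundary ``identity'' $\sum(3-\deg v)=4$ is only an inequality $\ge 4$ unless every inner degree equals four.

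The genuine gap is in the construction of the triple. Requiring each $x_j$ to lie in $T_{i_j}=S_{i_j}\setminus\bigcup_{i\ne i_j}S_i$ is not always achievable: take $G$ to be the $2\times 2$ grid and $v$ its centre. The four sets $T_i$ are the four side-midpoints, all of degree three; two opposite midpoints are the two endpoints of one maximal convex path (the middle row, resp.\ the middle column), so a minimal admissible $X$ (the four corners plus one representative per middle line) meets only two of the four $T_i$, and no triple of $X$-vertices each lying in exactly one $S_i$ exists. The centre is nonetheless the median of the two chosen midpoints together with the \emph{opposite corner}, a point of $S_i\cap S_{i+1}$ --- exactly the kind of point your restriction forbids. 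Relatedly, your rescue clause for a collapsed $T_i=\{u\}$ is false as stated: the maximal convex path $P$ ending at $u$ passes through $v$ and continues into an opposite halfspace, so its designated $X$-representative may lie in a completely different $S_j$ (or be $v$ itself), not ``in $S_{i_j}$ only''. What is actually needed, and what the paper supplies for inner vertices, is a separation argument: after securing $X$-points $p,q$ on two crossing maximal convex paths through $v$, the two remaining half-paths $P'$ and $Q'$ end at degree-three boundary vertices and cut off a sub-squaregraph, whose own four low-degree vertices (first assertion) yield a third point $r\in X$ avoiding the halfspaces containing $p$ and $q$; this $r$ is permitted to lie in the intersection of two consecutive $S_i$. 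Your reduction to the $2$-connected case is also under-justified: the gate map preserves medians only in one direction, and $X$ restricted to a block need not satisfy the hypotheses for that block, which is why the paper works in $G$ directly and reaches into other blocks via far vertices of degree at most two.
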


In general, a finite squaregraph may not be the median hull of its vertices of degrees one or two: the domino $K_2\Box K_{1,2}$ constitutes the smallest pertinent example. Even when the median hull of the vertices of degrees at most two is the entire squaregraph, not every vertex needs to be the median of three vertices of degree at most two. Take, for instance,  the vertex of degree 6 in the graph of Figure \ref{fig:examples}a, where a second step of iterating the median operation is necessary to obtain this vertex. The next proposition fully characterizes the median-generating sets. A convex path $P$ is said to be an {\it inner line} of a finite squaregraph $G$ if its two endpoints are boundary vertices of degree 3 and all its inner vertices are inner vertices of degree 4 in $G.$ The endpoints of an inner line $P$ cannot be articulation points of $G$, and $P$ is necessarily a maximal convex path.

%\begin{proposition}
%\label{prop:median_hull_bis} A boundary vertex $v$ of a finite squaregraph $G=(V,E)$ is either the median of three vertices of degree at most two or it bounds a maximal convex path $P$ of some block $B$ of $G$ such that $P$ equals the intersection of two distinct halfspaces $H_1$ and $H_2$ of $G.$ Consequently, a subset $X$ of $V$ median-generates $G$ if and only if $X$ includes all vertices of degrees at most two and meets every path $P$ of the preceding kind, in which case every vertex of $G$ is the median of three medians of three vertices each from $X.$ The paths $P$ considered in Proposition \ref{prop:median_hull_bis} are exactly the maximal convex paths of blocks of $G$ for which every interior vertex has degree 4 and the bounding vertices have degree 3 and are not articulation points of $G.$
%\end{proposition}

\begin{proposition}
\label{prop:median_hull_bis} The inner lines of $G$ are exactly the paths that lie entirely in 	2-connected blocks and equal the intersection of two halfspaces of $G.$ A boundary vertex $v$ of a
finite squaregraph $G=(V,E)$ is either the median of three vertices of degree at most two or it bounds an inner line. Consequently, a subset $X$ of $V$ median-generates $G$ if and only if $X$ includes all vertices of degree at most two and meets every inner line, in which case 	every vertex of $G$ is the median of three vertices which are each medians of three vertices from $X.$
\end{proposition}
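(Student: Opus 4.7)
The plan is to establish the three assertions of the proposition in order, reducing the boundary-vertex dichotomy and the median-generating criterion to the geometric characterization of inner lines (the first sentence) together with Proposition~\ref{prop:median_hull}.

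For the inner-line characterization, given an inner line $P = v_0 v_1 \ldots v_k$, I would first build a ``strip'' around $P$. The degree-$4$ condition at the interior vertices, together with the face structure of the squaregraph, produces squares $v_{i-1} v_i a_i a_{i-1}$ and $v_{i-1} v_i b_i b_{i-1}$ for $i = 1, \ldots, k$, where $a_i, b_i$ denote the two non-$P$ neighbors of $v_i$ on opposite sides of $P$; at the degree-$3$ endpoints the two non-$P$ neighbors supply $a_0, b_0$ and $a_k, b_k$. Because opposite edges of a square determine the same convex split, all edges $v_i a_i$ share a single split $\alpha$ and all $v_i b_i$ share a distinct split $\beta$. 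Let $H_\alpha$ and $H_\beta$ denote the halfspaces of $\alpha$ and $\beta$ containing $P$; the inclusion $V(P) \subseteq H_\alpha \cap H_\beta$ is clear, and for the reverse inclusion I would use a gate argument: if $x \in H_\alpha \cap H_\beta$ and $v_j$ is its gate in the convex set $V(P)$, then a shortest path from $v_j$ to $x$ cannot begin along $P$ (by the gate property), so it begins with $v_j a_j$ or $v_j b_j$, placing $x$ on the wrong side of $\alpha$ or $\beta$ unless $x = v_j$. Since the endpoints of $P$ are non-articulation, $P$ lies in a single $2$-connected block. For the converse direction, suppose a path $P$ in a $2$-connected block satisfies $V(P) = H_1 \cap H_2$. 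At each $v_i \in V(P)$, every non-$P$ neighbor must leave $H_1 \cap H_2$ and hence cross $\sigma_1$ or $\sigma_2$; since each split is crossed by at most one edge at a given vertex, interior vertices of $P$ have degree at most $4$ and endpoints at most $3$. Maximal convexity of $P$ combined with $2$-connectedness upgrades these bounds to equalities, and planarity forces the endpoints onto the outer boundary.

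For the second assertion, take a boundary vertex $v$ of degree at least $3$ (the case $\deg(v) \leq 2$ is trivial). By Proposition~\ref{prop:median_hull}, $v$ is an endpoint of some maximal convex path $P$. If $P$ is an inner line, then $v$ is an endpoint of an inner line. Otherwise $P$ terminates at an articulation point or at a degree-$\leq 2$ vertex on each side; tracing $P$ from $v$ together with a second maximal convex path out of $v$ supplies three degree-$\leq 2$ witnesses whose median is $v$, as in the proof of Proposition~\ref{prop:median_hull}.

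For the third assertion, necessity of including all degree-$\leq 2$ vertices is immediate (no such vertex can arise as the median of three other vertices of $G$), and necessity of meeting each inner line $P$ follows from the first assertion: halfspaces are closed under the median operation, so any iterated median of $X$ lying in $V(P) = H_1 \cap H_2$ forces $X \cap V(P) \neq \emptyset$. For sufficiency, observe that $X$ automatically meets every maximal convex path $Q$ not through an articulation point: if $Q$ is an inner line, by hypothesis; otherwise at least one endpoint of $Q$ has degree $\leq 2$ and lies in $X$. Proposition~\ref{prop:median_hull} then yields every vertex of $G$ as a one-step median from $X$, a fortiori as the stated two-step expression. The main obstacle will be the converse direction of the first assertion: extracting the exact degree conditions on $P$ from the halfspace-intersection property requires a careful maximality argument combining planarity and $2$-connectedness to upgrade the upper bounds to equalities and to force the endpoints of $P$ onto the outer boundary with degree exactly $3$.
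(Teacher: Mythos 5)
Your overall plan---reduce everything to Proposition~\ref{prop:median_hull}---is the paper's strategy too, but the sufficiency direction of the third assertion contains a genuine error. You claim that a maximal convex path $Q$ not passing through an articulation point either is an inner line or has an endpoint of degree at most two, and conclude that $X$ satisfies the hypothesis of Proposition~\ref{prop:median_hull}, so that every vertex is a \emph{one-step} median of vertices of $X$. Both claims are false. In the $5$-cogwheel (hub $c$ of degree $5$ surrounded by a $10$-cycle $v_1w_1v_2w_2\ldots v_5w_5$ with the $v_i$ adjacent to $c$), the path $v_1\,c\,v_3$ is a maximal convex path avoiding articulation points whose endpoints both have degree $3$ and whose interior vertex $c$ is an inner vertex of degree $5$; it is not an inner line and contains no vertex of degree at most two. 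Accordingly, with $X=\{w_1,\ldots,w_5\}$ (which contains all degree-$\le 2$ vertices, and there are no inner lines to meet), the hub $c$ is \emph{not} the median of any triple from $X$: that would require three pairwise nonadjacent indices on a $5$-cycle. This is precisely why the proposition asserts only a two-step generation and why the paper points to the degree-$6$ vertex of Figure~\ref{fig:examples}a as needing a second iteration. The paper's argument avoids the trap: it first shows that every \emph{boundary} vertex of degree at least three is a median of three vertices of $X$, by rerunning the $H_1\cap H_2$ dichotomy from the proof of Proposition~\ref{prop:median_hull} and noting that the only case there requiring a vertex of $X$ beyond the degree-$\le 2$ ones is when $H_1\cap H_2$ is a path inside a block---which, by the first assertion, is exactly an inner line; it then applies Proposition~\ref{prop:median_hull} a \emph{second} time with the entire boundary of the block as generating set to reach the inner vertices. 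A smaller gap of the same flavour affects your second assertion: by Proposition~\ref{prop:median_hull} the endpoints of maximal convex paths have degree at most three, so a boundary vertex of degree at least four is not an endpoint of any maximal convex path, and your reduction ``$v$ is an endpoint of some maximal convex path $P$'' simply does not apply to it; again the $H_1\cap H_2$ argument is what covers these vertices.

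On the first assertion, your forward direction (the strip of squares along $P$, the two $\Theta$-classes $\alpha$ and $\beta$, and the gate argument showing $H_\alpha\cap H_\beta=V(P)$) is sound and is essentially the zone argument the paper gestures at. For the converse you correctly obtain the upper bounds (interior vertices of degree at most $4$, endpoints at most $3$) from the fact that at most one edge per $\Theta$-class is incident with a given vertex, but you acknowledge leaving open the upgrade to the exact degrees and the placement of interior vertices as inner vertices demanded by the definition of an inner line; this still needs to be supplied (e.g., by showing that each vertex of $P$ lies on the border paths of both zones, so that both a $\sigma_1$- and a $\sigma_2$-neighbour exist at every vertex of $P$). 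Finally, in the necessity argument for meeting inner lines, the fact you actually need is not that halfspaces are median-closed but that the union $(V\setminus H_1)\cup(V\setminus H_2)$ is median-closed, which follows since the median of three points lies in any convex set containing two of them.
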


This result can be viewed as a tightening in case of squaregraphs of the characterization of median closures in general median algebras and median graphs given in \cite{Ba_gen,Berg,VdV}:  a vertex $v$ of a median graph $G$ belongs to the median closure of a set $X$ if and only if whenever $v$ belongs to the intersection of two halfspaces $H_1$ and $H_2$ of $G$ this intersection also contains a vertex of $X.$ Therefore, a subset $X$ is a median-generating set of $G$ if and only if any pair of halfspaces that intersect in $G$ also intersect in $X.$ Hence, if $X$ median-generates $G,$ then the incompatibility graph Inc$({\mathcal S}(G))$	of convex splits of $G$ is isomorphic to the incompatibility graph Inc$({\mathcal S}(G)|_X)$ of their traces on $X.$

Proposition \ref{prop:median_hull_bis} allows us to design a simple polynomial algorithm for computing a minimum median-generating set of a finite squaregraph $G.$ Since every degree 4 inner vertex of $G$ lies on at most two inner lines, the minimum number of such vertices to be added to a median-generating set can be determined by finding a maximum matching in the cross graph of inner lines: each inner line of $G$ corresponds to a node of the cross graph, and two nodes are linked by an edge if the corresponding inner lines share one common vertex. If $s$ is the total number of inner lines and $h$ is the size of a maximum matching in the cross graph, then $s-h$ vertices need to be added to the set of vertices of degree less than three in order to form a minimum-size median-generating set. In contrast, the task of computing a median-generating set of minimum size is NP-hard for arbitrary median graphs as the following theorem shows.

\begin{theorem}
\label{theorem:npc-gen}
The problem of deciding whether a given graph $G$ has a median-generating set $X$ of size at most a given number $g$ is NP-complete for finite median graphs not containing any 5-cube
but can be solved in polynomial time for finite squaregraphs.
\end{theorem}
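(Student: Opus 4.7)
The plan is to handle the two halves of the theorem separately: the polynomial-time algorithm for squaregraphs essentially formalizes the paragraph preceding the theorem, while the NP-completeness for the 5-cube-free class requires a gadget reduction. Membership in NP follows from the characterization of median-generating sets noted after Proposition \ref{prop:median_hull_bis}: given a candidate $X$ of size at most $g$, one verifies in polynomial time that for every pair of halfspaces $H_1,H_2$ of $G$ with $H_1 \cap H_2 \neq \emptyset$ one also has $H_1 \cap H_2 \cap X \neq \emptyset$, since convex splits correspond bijectively to the polynomially many parallelism classes of edges.

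For the polynomial-time part I would apply Proposition \ref{prop:median_hull_bis} to a finite squaregraph $G$. A subset $X$ median-generates $G$ if and only if $X$ contains every vertex of degree at most two (a fixed set of $n_{\le 2}$ mandatory vertices) and hits every inner line. Since each degree-4 inner vertex lies on at most two inner lines, the cross graph on the set of inner lines (an edge between two lines whenever they share a common vertex) reduces the remaining task to maximum matching: a matching of size $h$ contributes $h$ chosen vertices each hitting two inner lines and leaves $s - 2h$ unmatched lines requiring one extra vertex each, for a total of $s - h$ additional vertices on top of the mandatory $n_{\le 2}$. Hence the minimum generating-set size equals $n_{\le 2} + s - h$, and $n_{\le 2}$, $s$, and a maximum matching in the cross graph are all polynomial-time computable.

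For NP-hardness I plan to reduce from Vertex Cover on cubic graphs. Given $H=(V_H,E_H)$, I would build a median graph $G$ indexing one convex split $\sigma_v$ by each $v \in V_H$ and attaching to each edge $e=uv$ of $H$ a small gadget that forces every median-generating set of $G$ to contain at least one vertex on the $u$-side or on the $v$-side. The gadget extends a 4-cycle realizing $\sigma_u$ and $\sigma_v$ by auxiliary pendants whose compulsory inclusion in any generating set (forced by their degree-one status and the characterization of Proposition \ref{prop:median_hull_bis}) leaves precisely one genuine binary choice per edge. Gadgets are glued via the $^*$Hellyfication of Proposition~\ref{prop:Nica} so that all edges belonging to a common $\sigma_v$ merge into a single convex split of $G$. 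Cubicity of $H$ bounds the number of pairwise-crossing hyperplanes at any vertex of $G$ by a constant, ruling out a 5-cube. A minimum median-generating set of $G$ then equals the minimum vertex cover of $H$ plus a fixed additive overhead from the pendants.

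The main obstacle will be designing the edge-gadget so that, when multiple edge-gadgets are glued along shared splits, the halfspace-intersection condition precisely forces a vertex cover of $H$ and nothing more, while simultaneously preserving the median-graph property and avoiding any 5-cube. A combination of small explicit gadgets with the free median extension of Proposition~\ref{prop:Nica} should resolve all three constraints at once.
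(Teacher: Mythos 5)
Your NP-membership argument (via the halfspace characterization quoted after Proposition~\ref{prop:median_hull_bis}) and your polynomial-time algorithm for squaregraphs (mandatory degree-$\le 2$ vertices plus a maximum matching in the cross graph of inner lines, giving $n_{\le 2}+s-h$) are both correct and match what the paper does. The gap is in the NP-hardness half, which you have only sketched and which, as sketched, does not fit the structure of the problem. By the very characterization you invoke, $X$ median-generates $G$ if and only if $X$ meets \emph{every} nonempty intersection $H_1\cap H_2$ of two halfspaces. This is a pure hitting-set condition: a conjunction of constraints, one for each nonempty pairwise intersection, and for an incompatible pair of splits that means \emph{all four} quadrants must be hit. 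There is no natural way to extract from it the disjunctive constraint ``$X$ contains a vertex on the $u$-side \emph{or} on the $v$-side'' that Vertex Cover requires; moreover a single element of $X$ is a transversal of all splits simultaneously and therefore hits every quadrant it lies in, so one chosen vertex can service many edge-constraints at once in a way that has no analogue in Vertex Cover's accounting. Your own text concedes that the edge gadget --- the entire content of the reduction --- remains to be designed, and I do not believe it can be designed along the lines you describe.

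The paper's reduction exploits exactly the covering structure you are fighting against: it reduces from \emph{edge clique cover} (on $K_5$-free graphs, NP-complete by Rosgen and Stewart). Given $F$, attach a pendant vertex to each vertex of $F$ to form $F'$ and take $G=\kappa(F')$, the simplex graph. The convex splits of $G$ correspond to vertices of $F'$, and for an edge $uv$ of $F$ the intersection of the halfspaces ``simplices containing $u$'' and ``simplices containing $v$'' is precisely the set of simplices of $F'$ containing the edge $uv$; hence hitting all pairwise halfspace intersections is (up to the $n$ forced pendant edges, which handle the remaining quadrants) exactly covering all edges of $F$ by $k$ cliques. The $K_5$-freeness of $F$ keeps $\kappa(F')$ both polynomial in size and free of $5$-cubes. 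If you want to salvage your approach, you should redirect your reduction toward a covering problem of this type rather than Vertex Cover; as it stands, the hardness direction is not proved.
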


There is a metric concept of generation (producing a kind of injective envelope) that makes do only with the vertices of degree less than three. Given any subset $X$ of vertices from a finite bipartite graph $G$ with shortest-path metric $d$, we say that a map $f$ from $X$ to the nonnegative integers $\mathbb{N}$ is a \emph{parity-integer metric form} on $X$ if
$$f(x) + f(y) - d(x,y) \ge 0 \mbox{ and  } f(x) + f(y) + d(x,y) \mbox{  is even  for all } x,y \in X.$$
Such a map $f$ is said to be minimal if any parity-integer metric form $g$ on $X$ that is pointwise below $f$ equals $f$. Every vertex $u$ of the bipartite graph $G$ is associated with the minimal parity-integer metric form $d_u$ on $X$ defined by $d_u(v) = d(u,v)$. $G$ is called an \emph{absolute retract of bipartite graphs} if $G$ can be retracted from every bipartite supergraph $H$ in which $G$ is isometric~\cite{BaDr86}.

\begin{proposition}
\label{prop:injective-hull}
In any finite cube-free median graph $G = (V,E)$ with shortest-path metric $d$, every vertex is uniquely determined by its distances to the vertices of degrees one or two. Specifically, $(V,d)$ is isomorphic to the subspace of the $L_\infty$ space $\mathbb{N}^X$ that consists of the minimal parity-integer metric forms on the metric subspace $(X,d|_X)$ of $(V,d)$ comprising the vertices of degrees one or two in $G$. Thus, $G$ is the smallest absolute retract of bipartite graphs extending the metric subspace $(X,d|_X)$.
\end{proposition}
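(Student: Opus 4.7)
The plan is to prove Proposition~\ref{prop:injective-hull} in three stages. First, I would establish a structural lemma about cube-free median graphs that pins down the role of the degree-at-most-two vertices. Second, I would use this lemma to verify that the map $u \mapsto d_u|_X$ isometrically embeds $G$ into the set of minimal parity-integer metric forms on $(X, d|_X)$. Third, I would appeal to classical injective-hull theory for bipartite absolute retracts to conclude that this embedding is a bijection.

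\textbf{Key lemma.} In a finite cube-free median graph $G$, every vertex $w$ that is peripheral with respect to some vertex $v$ --- meaning every neighbor of $w$ is strictly closer to $v$ than $w$ is --- has degree at most two. I would prove this by contradiction: suppose such a $w$ has three distinct neighbors $w_1, w_2, w_3$, all at distance $d(v,w)-1$ from $v$. A short analysis of $m(w_i, w_j, v)$ shows that the interval $I(w_i, w_j)$ cannot reduce to the length-two path through $w$; hence each such interval is a 4-cycle and supplies a second common neighbor $a_{ij}$ of $w_i$ and $w_j$. Applying the same distance-two technique to the pairs $(a_{ij}, a_{ik})$ then forces $m(a_{12}, a_{13}, a_{23})$ to be a common neighbor of all three $a_{ij}$, so the median subalgebra generated by $\{w_1, w_2, w_3\}$ realizes all eight vertices of a 3-cube, which is convex and hence an isometric $Q_3$ subgraph of $G$, contradicting the cube-free hypothesis. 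This case analysis is the main technical obstacle.

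\textbf{Embedding and minimality.} Granting the key lemma, for any $u \neq v$ in $V$ one extends a shortest path from $v$ through $u$ inside the finite graph $G$ until one reaches a $v$-peripheral vertex $y$; the lemma forces $y \in X$, and then $d(v, y) = d(v, u) + d(u, y)$ yields $|d_u(y) - d_v(y)| = d(u, v)$. This simultaneously proves that $u \mapsto d_u|_X$ is injective and that it isometrically embeds $G$ into $(\mathbb{N}^X, \|\cdot\|_\infty)$. Bipartiteness of $G$ makes the parity condition automatic, and applying the lemma once more --- extending, for each $x \in X$, a geodesic from $x$ through $u$ to an $x$-peripheral vertex $y' \in X$ --- produces the witness $d(x, u) + d(u, y') = d(x, y')$ for minimality of $d_u$ at $x$. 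Hence the image of $u \mapsto d_u|_X$ lies inside the set of minimal parity-integer metric forms on $(X, d|_X)$.

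\textbf{Surjectivity and injective-hull conclusion.} For the reverse inclusion I would invoke two classical facts. First, median graphs are absolute retracts in the class of bipartite graphs, as can be read off from the $^*$Helly property of the halfspace hypergraph established in Proposition~\ref{prop:Mulder_Schrijver}. Second, by the Bandelt--Dress description of the bipartite tight span, the injective hull of a finite bipartite metric space $(X, d|_X)$ in the class of bipartite graphs is realized as the set of minimal parity-integer metric forms on $X$ equipped with the $L_\infty$ metric. Since $G$ is such an absolute retract containing $(X, d|_X)$ isometrically, the injective hull $T$ embeds isometrically into $G$; combined with the isometric embedding $G \hookrightarrow T$ from the previous paragraph, a finite cardinality count together with the injectivity of $u \mapsto d_u|_X$ forces both embeddings to be bijective. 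Hence $G \cong T$, which yields both the claimed $L_\infty$ isomorphism with the space of minimal parity-integer metric forms and, as a direct corollary, the ``smallest absolute retract of bipartite graphs'' characterization.
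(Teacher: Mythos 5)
Your first two stages track the paper's own argument closely: your key lemma is precisely the paper's Lemma~\ref{lem:far-deg2} (whose proof indeed only uses that all neighbors of $w$ are strictly closer to $v$, so your ``peripheral'' formulation is the one that proof actually establishes), and the geodesic-extension argument giving $\|d_u|_X-d_v|_X\|_\infty=d(u,v)$ is the same as the paper's, with the welcome addition that you explicitly verify minimality of $d_u$ (the paper asserts this in its setup without proof; your witness $y'$ with $d_u(x)+d_u(y')=d(x,y')$ is exactly what is needed). For surjectivity you diverge: the paper argues directly --- given a minimal form $f$, adjoin a new vertex $z$ joined to each $x\in X$ by a path of length $f(x)$, retract the resulting bipartite supergraph onto $G$, and observe that the image $u$ of $z$ satisfies $d_u|_X\le f$, hence $d_u|_X=f$ by minimality --- whereas you route the argument through the universal property of the bipartite tight span plus a cardinality count. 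Your route needs the additional input that the set of minimal parity-integer metric forms is itself the injective hull of $(X,d|_X)$ in the bipartite category; the paper's one-step construction avoids this.

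There is, however, a genuine error at the load-bearing step of your third stage. It is not true that median graphs are absolute retracts of bipartite graphs, and in particular this cannot be ``read off'' from the $^*$Helly property of the halfspace hypergraph in Proposition~\ref{prop:Mulder_Schrijver}, since that property holds for \emph{every} median graph. The cube $Q_3$ is a counterexample: adjoin two adjacent new vertices $z_1$ and $z_2$, with $z_1$ adjacent to $001,010,100$ and $z_2$ adjacent to $011,101,110$; the resulting graph is bipartite and contains $Q_3$ isometrically, yet any retraction would have to send $z_1$ to $000$ and $z_2$ to $111$, which are at distance three. The fact you actually need --- that finite \emph{cube-free} median graphs are absolute retracts of bipartite graphs --- is true, but it genuinely uses cube-freeness and is not proved in the paper either; the paper imports it from Corollary 4.5 of \cite{BaDr86}. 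As written, your surjectivity argument therefore rests on a false lemma; it is repaired by citing (or proving) the correct external result, after which either your tight-span route or the paper's direct path-attachment argument completes the proof.
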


A coarser concept of generation is the convex closure yielding the convex hull of any subset $Y$ of the vertex set. Trivially, every finite squaregraph $G$ is the convex hull of its vertices with degrees one or two. But in contrast to the previous generation concepts one can dispense with some of the vertices of degree two. The minimal number of vertices needed to generate the entire squaregraph $G$ as the convex hull has been termed the {\it hull number} $h(G)$. This evidently equals the minimum cardinality of a set of vertices that is incident with every minimal halfspace of $G$ \cite{Mu_exp}. The hull number of any median graph $G$ equals two exactly when $G$ is the covering graph of a finite distributive lattice $L$~\cite{BirKis-BotAMS-47}; in the case of a squaregraph, $L$ is embedded in a product of two chains. The minimal halfspaces of a finite squaregraph $G$ are of two types: any degree-one vertex is itself a minimal halfspace, as is each (maximal) convex path on the boundary of $G$ that has two degree-two endpoints and that does not contain any degree-four vertex or articulation point of $G.$ Note that the intersection graph $I(G)$  of the minimal halfspaces of $G$ constitutes an induced subgraph of the incompatibility graph Inc$({\mathcal S}(G))$ of convex splits.
Two intersecting minimal halfspaces can only share one common degree-two endpoint, and this shared vertex then corresponds to the edge linking the two minimal halfspaces in the intersection graph. Hence $I(G)$ is an even or odd cycle exactly when $G$ is 2-connected and all boundary vertices have degrees two or three. In all other cases, this intersection graph is a disjoint union of paths. The edges of this incompatibility graph correspond to vertices of degree two in $G,$  whereas the singleton components either comprise a single vertex of degree one in $G$ or represent some boundary path between two vertices of degree two. Therefore the hull number $h(G)$ equals the minimum number of edges and isolated vertices from $I(G)$ that jointly cover all vertices of this graph. Closely related to the hull number is another parameter for $G.$ The largest size $k$ of an independent set in the intersection graph $I(G)$ of all minimal halfspaces of $G$ is referred to as the {\it star-contraction number} $s(G),$ because the contraction of all halfspaces not belonging to this maximum independent set produces a star with $k$ leaves as a median homomorphic image of $G$ \cite{Mu_exp}. Summarizing this discussion, we obtain the following result:

\begin{proposition}
\label{prop:convex-hull}
The hull number $h(G)$ and star-contraction number $s(G)$ of a finite squaregraph G can be read off from the connected components (a single cycle or several paths) of the intersection graph $I(G)$ of minimal halfspaces of $G:$ in the case of a single cycle component, $h(G)$ equals half the size of the cycle rounded up to the next integer and $s(G)$ equals half the size of the cycle rounded down to the next integer, and otherwise, each path component of $I(G)$ contributes to both $h(G)$ and $s(G)$ half the number of path vertices rounded up to the next integer. Consequently, $h(G)=s(G)+1$ when $I(G)$ is an odd cycle, and in all other cases, $h(G)=s(G).$ In particular, $h(G)=s(G)$ if $G$ is an even squaregraph.
\end{proposition}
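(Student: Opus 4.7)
The plan is to reduce the computation of $h(G)$ and $s(G)$ to elementary parameters of the intersection graph $I(G)$, whose global structure---either a single cycle, or a disjoint union of paths (with isolated vertices regarded as one-vertex paths $P_1$)---is already recorded in the paragraph immediately preceding the proposition. There, $h(G)$ is identified with the minimum number of edges and isolated vertices of $I(G)$ needed to cover all of its vertices, while $s(G)$ is by definition the independence number $\alpha(I(G))$. It therefore suffices to evaluate both quantities on cycles and on paths, and to sum over components in the disconnected case.

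For the single-cycle case $I(G)=C_n$, the standard identities $\alpha(C_n)=\lfloor n/2\rfloor$ and minimum edge cover of size $\lceil n/2\rceil$ apply: take $\lfloor n/2\rfloor$ disjoint edges, plus one extra edge when $n$ is odd; no isolated-vertex cover is available since the cycle has no isolated vertex. This yields the first clause and, in particular, $h(G)=s(G)+1$ exactly when $n$ is odd.

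For a disjoint union of path components (including $P_1$'s), a greedy pairing from one endpoint shows that a path on $n\ge 2$ vertices has minimum edge cover of size $\lceil n/2\rceil$, and an isolated component $P_1$ contributes $1=\lceil 1/2\rceil$; also $\alpha(P_n)=\lceil n/2\rceil$ in each case. Summing over components gives the second clause and the equality $h(G)=s(G)$.

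For an even squaregraph $G$ one must rule out the odd-cycle alternative. By the discussion preceding the proposition this would require $G$ to be $2$-connected with all boundary degrees in $\{2,3\}$, and the cycle length equals the number $n_2$ of degree-$2$ boundary vertices. Two parity observations close the argument: the boundary cycle has even length $\ell=n_2+n_3$ because $G$, being a median graph, is bipartite; and the handshake lemma, combined with the hypothesis that every interior vertex has even degree, forces the number of odd-degree boundary vertices---namely $n_3$---to be even. Hence $n_2=\ell-n_3$ is even, ruling out the odd-cycle case and delivering $h(G)=s(G)$. I expect this final parity check to be the only nontrivial step; the cover-and-independence calculations on cycles and paths are entirely routine.
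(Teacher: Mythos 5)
Your proposal is correct and follows essentially the same route as the paper, which offers no separate proof but presents the proposition as a summary of the preceding discussion reducing $h(G)$ to a minimum cover of $I(G)$ by edges and isolated vertices and $s(G)$ to the independence number of $I(G)$, evaluated on a cycle or a disjoint union of paths. The only piece the paper leaves entirely implicit is the even-squaregraph clause, and your handshake-plus-bipartiteness parity argument (using that the cycle length of $I(G)$ equals the number of degree-two boundary vertices) supplies it correctly.
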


The relationship between  $h(G)$ and $s(G)$ provided by Proposition \ref{prop:convex-hull} cannot be generalized to larger classes of median graphs: there exist cube-free median graphs
for which $h(G)-s(G)$ can be arbitrarily large \cite{BaCh_acyclic}. Note also that the equality $h(G)=s(G)$ does not characterize the even squaregraphs:  to find squaregraphs $G$ in which
all inner vertices have odd degrees while $I(G)$ is an even cycle,  just take a sneak preview to Figure \ref{fig:45tess} in the next section.

Concepts related to hull number and star-contraction number refer to the compatibility graph of the convex splits of $G.$ The clique number of the latter was denoted in \cite{BaCh_acyclic} by $t(G),$ whereas the chromatic number was denoted by $c(G).$ Then $t(G)$ is the maximum number of edges in any median-homomorphic tree image, whereas $c(G)$ is the minimum number of squares and single edges that together meet all halfspaces of $G.$ In the simplex graph $G$ of a 5-cycle one has $c(G)=t(G)+1=3.$ Now, there are squaregraphs with an arbitrarily large number of convex subgraphs isomorphic to this simplex graph such that each halfspace meets at most one of those distinguished subgraphs (to find such graphs, see again  Figure \ref{fig:45tess} in the next section). Therefore $c(G)-t(G)$ can attain any nonnegative number for finite squaregraphs $G,$ although even squaregraphs $G$ are again characterized by equality $c(G)=t(G).$
Passing from the compatibility graph to the incompatibility graph then changes the picture considerably. The clique number of the incompatibility graph of the convex splits of $G$ is trivially 2 unless $G$ is a tree (where it is 1), but the chromatic number will then determine how many tree factors are needed for coordinatizing $G.$ This is exactly the problem that we will solve in
Section \ref{sec:tree-products}.

\section{Local characterization of squaregraphs}
\label{sec:local-x}

Finite squaregraphs can be characterized among finite median graphs in terms of forbidden subgraphs (which can be assumed
to be either induced or isometric or convex, respectively), without reference to an embedding into the plane.
The forbidden configurations are then reflected by halfspace patterns as well.
Given a discrete copair hypergraph ${\mathcal H}= (X,{\mathcal E}),$
a \emph{suspended cycle} in $\mathcal H$ consists of $k+1$ hyperedges
$A_0, A_1,\ldots, A_{k-1}, A_k=A_0$ $(k > 3)$ and $D$ such that (i)
$A_i$ and $A_j$ intersect exactly when $|j-i|=1$ and (ii) $D$ is
included in all complementary hyperedges $X\setminus A_i$ $(i=0,\ldots, k-1).$ A
minimal set $X$ realizing the $k+1$ splits involving a suspended
cycle has $k+1$ elements. The median graph obtained from the
Hellyfication of the corresponding copair hypergraph is then a
\emph{suspended cogwheel}; see Figure~\ref{fig:forbidden}. A suspended cogwheel is thus
formed by a {\it cogwheel} \cite{KlKo} (alias bipartite wheel) plus
a pendant vertex adjacent to the center of the wheel. We dot not regard the cube minus a vertex as a cogwheel here.
In particular, a $k$-{\it cogwheel} then consists of a central vertex and a cycle of length 	$2k > 6$ such that every
second vertex is adjacent to the center. A $k$-cogwheel 	is said to be {\it even} or {\it odd,} respectively,
depending on the parity of $k.$

\begin{proposition}
\label{prop:forbidden}
For a finite graph $G=(V,E),$ the following
statements are equivalent:
\begin{itemize}
\item[(i)]   $G$ has a plane embedding as a squaregraph;
\item[(ii)]   $G$ is a median graph such that $G$ does not contain any of
the following graphs as induced subgraphs as induced subgraphs
(or isometric subgraphs, or convex subgraphs, respectively): the cube,
$K_2\Box K_{1,3}$, and suspended cogwheels;
\item[(iii)]   $G$ is the median graph associated with the
Hellyfication of a  copair hypergraph $\mathcal H$ on some finite
set $X$ which has no triplet of properly intersecting hyperedges, no halfspace that
properly intersects three pairwise disjoint halfspaces,  and no
suspended cycle;
\item[(iv)]    $G$ is a cube-free median graph that is the
median hull of some subset $X$ of the vertex set $V$ such that the
trace of the system of convex splits of $G$ on $X$ is circular.
\end{itemize}
\end{proposition}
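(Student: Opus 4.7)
The plan is to establish the cycle of implications (i) $\Rightarrow$ (ii) $\Rightarrow$ (iii) $\Rightarrow$ (iv) $\Rightarrow$ (i), relying on the halfspace duality of Proposition~\ref{prop:Mulder_Schrijver} as the dictionary between median graphs and their halfspace hypergraphs, and invoking Proposition~\ref{prop:Nica} and Proposition~\ref{prop:unique_cyclic_order} when needed to pass between a median graph and a cyclically ordered generating set.

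For (i) $\Rightarrow$ (ii), I first invoke the fact recalled in the introduction that every finite squaregraph is a median graph. Each forbidden configuration is then ruled out by combining planarity with the degree constraint on inner vertices. A convex $3$-cube would localize three quadrilateral faces meeting at a common ``top'' vertex, which is then forced to be an inner vertex of degree exactly three, contradicting the squaregraph definition. A convex copy of $K_2 \Box K_{1,3}$ produces the same local picture around the degree-$4$ vertex of the product, and a suspended cogwheel is ruled out by observing that the pendant vertex adjacent to the wheel's centre must protrude into one of the quadrilateral faces meeting at that centre, again creating an inner vertex of degree three.

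For (ii) $\Leftrightarrow$ (iii), I apply Proposition~\ref{prop:Mulder_Schrijver} with $X = V$, which identifies finite median graphs with finite $^*$Helly copair hypergraphs separating the points. Under this correspondence, I translate each forbidden subgraph into a forbidden hyperedge pattern and vice versa: three pairwise properly intersecting halfspaces reconstruct a convex $3$-cube (by taking medians of their pairwise meetings); a halfspace properly intersecting three pairwise disjoint halfspaces reconstructs $K_2 \Box K_{1,3}$, with the central edge spanning the crossing halfspace and its complement; a suspended cycle of $k+1$ hyperedges reconstructs a suspended $k$-cogwheel, the pendant vertex being the element shared by all complements of the $A_i$. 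For (iii) $\Rightarrow$ (iv), cube-freeness of $G$ is immediate from the first bullet. Taking $X$ to be the ground set of $\mathcal H$, Proposition~\ref{prop:Nica} guarantees that $X$ median-generates $G$ and that the trace of ${\mathcal S}(G)$ on $X$ equals $\mathcal H$; I then produce a cyclic order on $X$ by induction on $\#\mathcal H$, peeling off at each stage an ``outermost'' split, whose existence is forced by combining the triangle-free incompatibility structure (first bullet), the absence of the star pattern (second bullet), and the absence of suspended cycles (third bullet).

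For (iv) $\Rightarrow$ (i), I place $X$ on a circle in the given cyclic order, realize each convex split of $G$ as a chord, and appeal to the chord-diagram duality described in Section~\ref{sec:intro}: cube-freeness forbids three pairwise crossing chords, so the planar dual of the chord arrangement is a plane graph with all inner faces quadrilateral, and the ``median hull of $X$'' condition forces the inner vertices to have degree at least four; by Proposition~\ref{prop:Mulder_Schrijver} this dual graph is isomorphic to $G$. The main obstacle is the circular-realizability step inside (iii) $\Rightarrow$ (iv): showing that a $^*$Helly 2-compatible copair hypergraph whose incompatibility graph avoids the two additional patterns truly admits a total cyclic order realizing all splits as chords. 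The selection of a removable split at each inductive step requires combining all three bullets, and it is precisely the suspended-cycle ban that rules out the triangle-free circle-graph-like obstructions lacking an outermost chord.
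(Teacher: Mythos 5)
Your cycle of implications is a reasonable organization, but the proof has a genuine gap at the step you yourself flag as ``the main obstacle'': the construction of the total cyclic order in (iii)\,$\Rightarrow$\,(iv). You assert that an ``outermost'' split can always be peeled off and that the three forbidden patterns force its existence, but you give no argument for why such a split exists, why the residual system still satisfies the hypotheses, or why the removed split can be re-inserted as a chord consistently with the cyclic order obtained by induction. This is exactly where the paper does its real work: Lemma~\ref{2connected} shows by induction that a \emph{minimal} halfspace $W(u,v)$ must coincide with the zone side $P(u,v)$ and hence be a convex path whose removal leaves a smaller 2-connected squaregraph (this is the rigorous form of ``outermost split''), which yields the plane embedding; circularity is then read off the boundary cycle via the ladder structure of zones (Lemma~\ref{strip}) and extended to the non-2-connected case by an explicit amalgamation of the cyclic orders of the blocks (Lemma~\ref{amalgam}). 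In other words, the paper obtains the cyclic order \emph{through} the plane embedding rather than directly from the hypergraph axioms; your direct combinatorial route would have to reconstruct essentially the same induction, and as written the cycle of implications is broken at this point.

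There are also weaknesses in (i)\,$\Rightarrow$\,(ii). All three of your exclusion arguments tacitly assume that the 4-cycles of the forbidden subgraph are \emph{inner faces} of $G$; for an induced (or even convex) subgraph this is not automatic and is precisely the content of the fourth assertion of Lemma~\ref{4corners} (every 4-cycle of a squaregraph bounds an inner face), which rests on the count of degree-two vertices. Without citing that fact, ``three quadrilateral faces meeting at a common vertex'' is unjustified. Moreover, the local pictures are not quite right: for $K_2\Box K_{1,3}$ the obstruction is that the central edge would lie on three distinct quadrilateral faces (an edge borders at most two faces in the plane), not a degree-three inner vertex; for the suspended cogwheel the obstruction is that the pendant edge would have to enter one of the inner faces surrounding the hub. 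Finally, you take for granted that finite squaregraphs are median graphs; the paper proves this (Lemma~\ref{median}) by a minimal-face argument, and condition (ii) cannot be verified without it. The translation (ii)\,$\Leftrightarrow$\,(iii) and the chord-diagram argument for (iv)\,$\Rightarrow$\,(i) are in order, modulo the identification of $G$ with the Hellyfication of its trace system, which follows from Proposition~\ref{prop:Nica} as you indicate.
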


\begin{figure}[t]
\centering\includegraphics{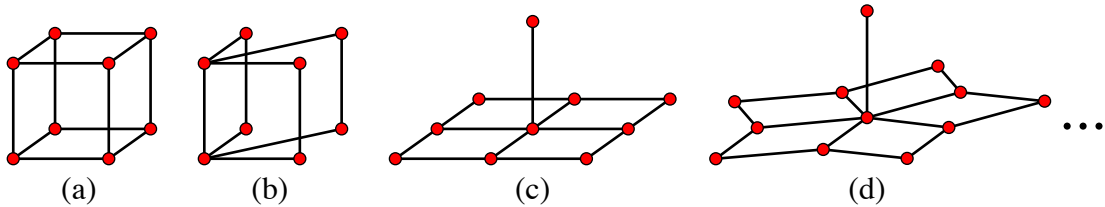}
\caption{Forbidden induced subgraphs of squaregraphs: (a) cube; (b) $K_2\Box K_{1,3}$; (c)\&(d)~the first two suspended cogwheels.}
\label{fig:forbidden}
\end{figure}

This structural characterization of squaregraphs can be generalized
to infinite median graphs in a straightforward way.   An infinite plane graph
is called an {\it infinite  squaregraph} if every finite convex subgraph
consitutes a finite squaregraph.  In a finite
squaregraph one has a well-defined boundary cycle. In the infinite
case, however, we have to resort to a substitute, the ``virtual
boundary", where rays come into play.
Let $G$ be an infinite graph such that the convex hull of every
finite set in $G$ is a finite squaregraph.  Then $G$ is necessarily
a cube-free median graph, in which every edge $uv$ gives rise to a zone $Z(uv)$
just as in the case of a finite squaregraph: $Z(uv)$ is the ladder subgraph
induced by the vertices incident with the edges from the equivalence class
$\Theta(uv)$ (see Section \ref{proofs::circular-split}).
Then $P(u,v)=Z(uv)\cap W(u,v)$ and $P(v,u)=Z(uv)\cap W(v,u)$ are convex paths,
which are both  either finite or one-way or two-way infinite. Now, extend the
vertex set $V$ of $G$ by adding virtual endpoints to all rays that border a zone.
The same endpoint is added to all zonal rays that are comparable with respect to
inclusion. The {\it virtual boundary} of $G$ consists of all endpoints of
border paths of zones and virtual endpoints of zonal rays of $G.$ We call a graph
$G$ {\it circular} if the
traces of convex splits of $G$ on the virtual boundary of $G$ form a circular split system
(whenever a ray is included in a halfspace $H$ of $G,$ then its endpoint is included in the trace of
$H$ on the virtual boundary).

\begin{proposition}
\label{prop:forbidden-infinite}
For an infinite median graph $G,$  the following
statements are equivalent:
\begin{itemize}
\item[(i)]   the convex hull of every finite set in $G$ is a finite squaregraph;
\item[(ii)] $G$ is the directed union of convex subgraphs that are
finite squaregraphs;
\item[(iii)]  $G$ is cube-free such that $K_2\Box K_{1,3}$ and the suspended
cogwheel are not induced (or isometric or convex) subgraphs of $G$;
\item[(iv)] $G$ has no triplet of properly intersecting halfspaces,
no halfspace that properly intersects three pairwise disjoint
halfspaces, and no cycle of properly intersecting halfspaces
included in another halfspace;
\item[(v)]   $G$ is cube-free and circular.
\end{itemize}
\end{proposition}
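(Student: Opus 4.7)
The plan is to run the implications cyclically using Proposition~\ref{prop:forbidden} as the finite engine for (i)--(iv), and to extract (v) by gluing the cyclic orders supplied by Proposition~\ref{prop:unique_cyclic_order} on a cofinal chain of finite subsquaregraphs. I would first establish (i) $\Rightarrow$ (ii): the family of convex hulls of finite subsets of $V$ is directed (take $\mathrm{conv}(F_1\cup F_2)$ as a common upper bound) and exhausts $V$, and under (i) each member is a finite squaregraph. For (ii) $\Rightarrow$ (iii), any hypothetical induced, isometric, or convex copy of a forbidden finite graph has only finitely many vertices, so it lies in one of the convex squaregraphs of the directed union, contradicting Proposition~\ref{prop:forbidden}(ii). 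For (iii) $\Rightarrow$ (i) I would first show that in a cube-free median graph the convex hull of a finite set is finite: each interval $I(u,v)$ embeds into the $d(u,v)$-cube, and the cube-free hypothesis bounds the dimension so that the closure under medians of triples stabilizes after finitely many rounds. The resulting finite convex subgraph still avoids the cube, $K_2\Box K_{1,3}$, and suspended cogwheels, and is therefore a finite squaregraph by Proposition~\ref{prop:forbidden}.

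\emph{Equivalence (iii) $\Leftrightarrow$ (iv).} This is the halfspace dictionary translated from the finite case. A cube in $G$ corresponds to three pairwise properly intersecting convex splits; a $K_2\Box K_{1,3}$ in $G$ corresponds to a halfspace properly meeting three pairwise disjoint halfspaces; a suspended cogwheel corresponds to a cycle of properly intersecting halfspaces whose union is contained in the complement of the pendant halfspace. Proposition~\ref{prop:Mulder_Schrijver} legitimizes the translation in either direction in the infinite setting, and combining this dictionary with the finite case from Proposition~\ref{prop:forbidden}(iii) yields (iii) $\Leftrightarrow$ (iv) directly.

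\emph{Equivalence with (v).} Under (ii), I would pick a cofinal chain $G_1\subseteq G_2\subseteq\cdots$ of finite convex squaregraphs exhausting $G$. By Proposition~\ref{prop:unique_cyclic_order} each $G_n$ admits a total cyclic order on its boundary, unique up to reversal in the $2$-connected case, and the convexity of $G_n$ inside $G_{n+1}$ forces the $G_n$-boundary order to be a coherent restriction of the $G_{n+1}$-order. A compatible orientation is chosen once and for all by a diagonal argument; zonal rays whose finite truncations form nested boundary arcs collapse to virtual endpoints, and the coherent finite cyclic orders assemble into a total cyclic order on the virtual boundary whose arcs are precisely the traces of the convex splits, giving (v). Conversely, if (v) holds, circularity of the boundary trace is $2$-compatible and hence excludes three pairwise properly intersecting halfspaces (no cube), while the analogous non-arc obstructions on the virtual boundary exclude the halfspace configurations associated with $K_2\Box K_{1,3}$ and with suspended cogwheels, recovering (iii) and (iv).

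The hardest step will be the construction of the total cyclic order on the virtual boundary in (v): one must simultaneously identify virtual endpoints coming from comparable zonal rays, orient the finite boundary cycles compatibly across the cofinal chain, and verify that halfspaces with infinite support still trace arcs on the virtual boundary. The last verification is reduced to the finite case via (ii), but the bookkeeping around virtual endpoints is the genuinely new ingredient not already contained in Proposition~\ref{prop:forbidden}.
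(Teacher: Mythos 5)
Your proposal follows essentially the same route as the paper: everything is funneled through the finite case (Proposition~\ref{prop:forbidden}) using the fact that convex hulls of finite sets in a median graph are finite median subgraphs, the forbidden-subgraph and halfspace-pattern conditions are matched by exhibiting finite convex witnesses, and condition (v) is handled by assembling the cyclic orders of a directed family of finite squaregraphs (which is exactly what the paper's auxiliary lemma on the virtual boundary does). Two points in your treatment of (v) need repair, however. First, the claim that ``circularity of the boundary trace is $2$-compatible'' is false: three chords of a circle can pairwise cross, so a circular split system may well contain three pairwise incompatible splits; the absence of a triplet of properly intersecting halfspaces must instead be drawn from the cube-freeness that is explicitly part of hypothesis (v) (three pairwise incompatible convex splits of a median graph span a convex cube). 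Second, in (ii)$\Rightarrow$(v) the boundary of $G_n$ is in general not a subset of the boundary of $G_{n+1}$ --- in $\mathbb{Z}^2$ with $G_n$ the box $[-n,n]^2$ the two boundary vertex sets are disjoint --- so the $G_n$-order cannot literally be ``a coherent restriction'' of the $G_{n+1}$-order. Coherence has to be formulated on the ends of zones, equivalently on the intersections of maximal convex paths with the successive boundary cycles (this is how the paper propagates a consistent orientation of the inner faces through the directed union); you gesture at the right objects with the zonal rays and virtual endpoints, but the mechanism as stated does not work and is precisely the nontrivial content of that step.
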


%Note that the conditions of this proposition are less restrictive than those of
%Theorem~\ref{theorem:infinite} below; for instance, an infinite star $K_{1,\infty}$
%satisfies the conditions of this proposition but not those of Theorem~\ref{theorem:infinite}.

\section{Infinite squaregraphs and geometric duality}
\label{sec:line-arrangements}

\begin{figure}[b]
\centering\includegraphics[scale=0.57]{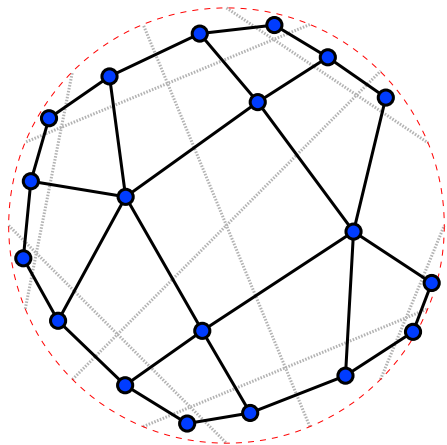}
\caption{A chord diagram and its planar dual, a squaregraph.}
\label{fig:dual}
\end{figure}

There is yet another natural way of interpreting the representation
of a squaregraph by a diagram of chords within a unit disk of the
Euclidean plane. Consider any finite arrangement of (straight-line)
chords of a unit circle such that no two chords intersect on the
circle and no three chords intersect pairwise. The resulting
configuration is then called a {\it triangle-free chord diagram}.
For further background and references concerning arbitrary chord
diagrams, see \cite{Sa02}. Alternatively, one can view a chord
diagram as a certain cyclic double permutation of the numbers from 0
to $n-1$: label $2n$ distinct points on the circle arbitrarily by
using each number exactly twice; a pair of points with the same
label is then connected by a chord. This labeling yields a
triangle-free chord diagram exactly when no three numbers $i,j,k$
are traversed along the oriented circle in the order $i,j,k,i,j,k.$
Every (finite) triangle-free chord diagram yields a squaregraph by
taking the planar dual of the map bounded by the chorded circle; see
Figure~\ref{fig:dual}.
Indeed, every region of the map not bounded by an arc of the circle
has degree at least 4 in the dual graph because the chord diagram is
triangle-free; moreover, all faces of the planar dual are 4-cycles
since every intersection point of the chords and the circle belongs
to either exactly two chords or one chord and the circle. Note that
the interpretation of a chorded circle in the preceding section
simply refers to the boundary regions of the planar map, where the
chords then represent the splits for the boundary regions (which are
represented in Figure~\ref{fig:forbidden} by points on the circle). Therefore the
planar dual simply reflects the algebraic dual \cite{Is} of the
2-compatible circular split system.

\begin{figure}[t]
\centering\includegraphics[height=3in]{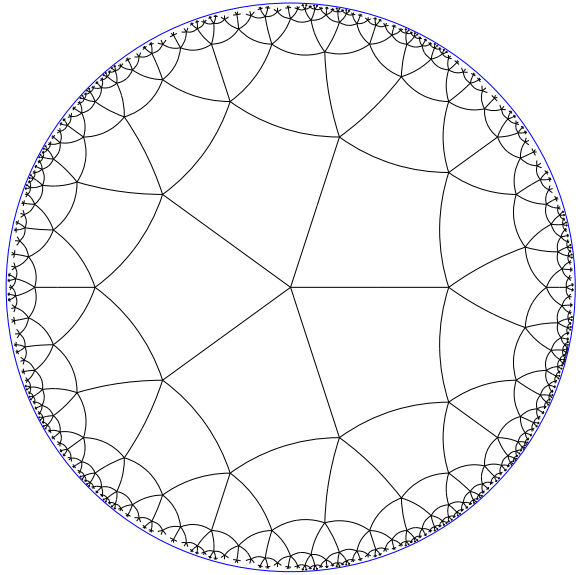}
\caption{The $\{4,5\}$ tesselation of the hyperbolic plane, reproduced from Figure 10 of \cite{Epp_orbi}.
Image produced using a Java applet written by Don Hatch, http://www.plunk.org/$\sim$hatch/HyperbolicApplet/.}
\label{fig:45tess}
\end{figure}

For finding a suitable extension to the infinite case, it is helpful
to view a chord diagram as a set of lines in the Klein model of the
hyperbolic plane. So, a finite squaregraph can equivalently be
described as the dual of a finite triangle-free hyperbolic line
arrangement. This suggests that the planar dual of an arbitrary locally finite triangle-free arrangement of
straight lines in the hyperbolic plane could be regarded as a (not necessarily finite) squaregraph. A pertinent
example of an infinite squaregrpah is the $\{ 4,5\}$ tesselation of the hyperbolic plane, which
yields a regular infinite squaregraph where every vertex has degree
5; see Figure 10 of \cite{Epp_orbi} (reproduced here as Figure~\ref{fig:45tess}),
shown in the Poincar\'e model of the hyperbolic plane. The hyperbolic lines of the arrangement  pass through opposite midpoints of the faces of each edge of this tiling.

By ``locally finite'' we mean that any open disk in the hyperbolic plane intersects only a finite number of lines of the arrangement.
Since closed disks are compact, this definition is equivalent to requiring that for every point $v$ there exists some $\epsilon>0$ such that the
$\epsilon$-disk centered at $v$ meets only finitely many lines. Since the hyperbolic plane is covered by a countable (directed) union of disks,
it follows that a locally finite arrangement comprises at most 	countably many lines. The result that the
dual of such an arrangement is a partial cube would then follow directly from known results on locally finite pseudoline arrangements \cite{BjLVStWhZi,EppFaOv}.
The lines of the arrangement correspond to the convex splits of the associated squaregraph, just as in the finite case. Note that,
by compactness of line segments, any two points of the hyperbolic plane are separated by no more that a finite number of lines from the locally finite 	arrangement.
The arrangement  is called {\it triangle-free} if no three lines intersect pairwise. If ${\mathcal A}$ is a hyperbolic line arrangement, then the complement of the union
of all lines from ${\mathcal A}$ in the hyperbolic plane is partitioned into cells via the intersection of the equivalence relations induced by the complementary open halfplanes
obtained from ${\mathcal A}.$ The {\it cell copair hypergraph} of ${\mathcal A}$ has the cells of ${\mathcal A}$ as its vertices and the sets of all cells included in open
halfplanes associated with the lines of ${\mathcal A}$ as its hyperedges. The {\it dual graph} of ${\mathcal A}$ has the cells as its vertices with two cells being adjacent
exactly when there exists no more than one line from ${\mathcal A}$ separating them.

\begin{theorem}
\label{theorem:infinite}
For a connected graph $G$ with finite vertex degrees, the following statements are equivalent:
\begin{itemize}
\item[(i)] $G$ has a locally finite squaregraph embedding, that is, $G$ can be drawn in the plane with its vertices
as points and its edges as disjoint rectifiable curves in such a way that every
bounded subset of the plane intersects finitely many vertices and
edges of $G$, every vertex either belongs to the boundary of an unbounded connected component of the complement of the drawing or has at
least four incident edges, and every bounded connected component of the complement of the drawing is the region bounded by a cycle of four edges of $G$;
\item[(ii)] for any vertex $v$ in $G$ and any integer $r$, the ball $B_r(v)$ induced by the vertices with distance at most $r$ from $v$ is isomorphic to a finite squaregraph;
\item[(iii)] $G$ can be covered by a countable chain of finite subgraphs $G_0\subset G_1\subset\cdots$ each of which is isomorphic to a squaregraph;
\item[(iv)] $G$ is a median graph that is isomorphic to a connected component of the dual graph of  some triangle-free  arrangement $\mathcal A$ in the hyperbolic
plane such that any cell represented by a vertex of $G$ is bordered by finitely many lines of $\mathcal A$. If, in addition, $\mathcal A$ is locally finite, then $G$ is the dual graph of $\mathcal A$.
\end{itemize}
\end{theorem}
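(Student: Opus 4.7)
The plan is to establish the cycle (i)~$\Rightarrow$~(ii)~$\Rightarrow$~(iii)~$\Rightarrow$~(iv)~$\Rightarrow$~(i). Two of these steps are routine. For (i)~$\Rightarrow$~(ii), local finiteness of the drawing confines each ball $B_r(v)$ to a compact region of the plane, forcing it to be finite, and the plane embedding inherited by the induced subgraph satisfies the squaregraph axioms (bounded faces are quadrilaterals, interior vertices have degree at least four) since $G$ has these properties locally. To handle ``boundary vertices'' of the ball cleanly, it is convenient to first check that $G$ is a cube-free median graph by verifying the finite forbidden configurations of Proposition~\ref{prop:forbidden-infinite} directly in the drawing, and then invoke the fact that balls in median graphs are convex. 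For (ii)~$\Rightarrow$~(iii) one picks a basepoint $v_0$ and sets $G_n = B_n(v_0)$: by (ii) each $G_n$ is a finite squaregraph, the inclusions $G_n \subset G_{n+1}$ are clear, and $G = \bigcup_n G_n$ by connectedness.

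For (iii)~$\Rightarrow$~(iv), I first deduce that $G$ is cube-free median and avoids $K_2 \Box K_{1,3}$ and all suspended cogwheels: every such forbidden pattern is finite and would therefore lie in some $G_n$, contradicting the fact that $G_n$ is a squaregraph. Consequently the incompatibility graph of the convex splits of $G$ is triangle-free, and by Proposition~\ref{prop:forbidden-infinite}(v) the traces of these splits on the virtual boundary form a circular split system for some total cyclic order. To realize the splits as a hyperbolic line arrangement, I would embed this (countable) cyclic order into the ideal circle of the Klein disk model of the hyperbolic plane, and associate to each convex split the hyperbolic chord whose endpoints separate its two arcs. The resulting arrangement $\mathcal A$ is triangle-free because no three splits of $G$ are pairwise incompatible, and $G$ is obtained as the component of the dual graph of $\mathcal A$ consisting of the cells corresponding to its vertices; the finite-boundary condition on those cells just reflects the finite vertex degrees of $G$. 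I expect this step to be the main obstacle, because verifying that the abstract combinatorial incidences among splits are faithfully realized by line crossings requires a careful argument, most naturally carried out as a coherent limit of the finite chord-diagram realizations of the $G_n$.

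For (iv)~$\Rightarrow$~(i), place each vertex of $G$ at an interior point of its cell in the Klein disk, draw each edge as a curve crossing the line segment shared by the two adjacent cells, and transport the picture through a homeomorphism of the open disk onto $\mathbb R^2$ to obtain a plane drawing. Bounded faces are 4-cycles because each crossing of $\mathcal A$ involves exactly two lines and is surrounded by four cells; inner vertices have degree at least four because triangle-freeness forbids bounded cells from being triangles; and local finiteness of the drawing follows from the hypothesis that each cell corresponding to a vertex of $G$ is bordered by only finitely many lines of $\mathcal A$, together with the fact that $G$ itself is locally finite as a graph.
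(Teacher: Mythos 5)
Your overall architecture (a cycle through the same four conditions, using the Klein model and the circularity of the split system) is sound, but two of your steps have genuine problems.

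First, your (i)~$\Rightarrow$~(ii) leans on the assertion that balls in median graphs are convex, which is false: already in the $2\times2$ grid $P_3\Box P_3$ the ball $B_1(v)$ around a corner $v$ omits the fourth vertex of the square spanned by the two neighbours of $v$, hence is not convex. Moreover, the claim that the induced drawing of $B_r(v)$ ``inherits the squaregraph axioms locally'' is exactly the point that needs proof, since deleting the vertices at distance $>r$ merges faces and changes which vertices are inner. The paper avoids this by routing (i)~$\Rightarrow$~(iii) first (clipping the drawing by concentric Euclidean circles to produce a chain of finite squaregraphs) and then deducing (ii) from (iii) via Lemma~\ref{lem:Gvr-is-square}, whose inductive proof peels off a maximally distant vertex, which by Lemma~\ref{lem:far-deg2} has degree at most two. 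You need some version of that lemma; your one-line justification does not supply it.

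Second, and more seriously, your (iii)~$\Rightarrow$~(iv) is a plan rather than a proof, as you yourself acknowledge. Placing chord endpoints on the ideal circle according to the cyclic order of the virtual boundary correctly reproduces the incompatibility pattern of the splits (two chords cross iff the four endpoints interleave), but that alone does not show that $G$ is a component of the \emph{dual graph} of the arrangement. The crux is that for each vertex $v$ the region $R_v$, the intersection of the open halfplanes corresponding to the halfspaces containing $v$, is a single face of $\mathcal A$, i.e.\ that no line belonging to a zone not incident to $v$ slices through $R_v$; without this, a vertex of $G$ would correspond to several cells and the dual graph would not contain $G$. This is where the paper spends most of its effort: it fixes the cyclic order of zone ends using the plane embedding from (i), proves triangle-freeness of $\mathcal A$ by observing that three pairwise crossing zones would bound a finite squaregraph with only three degree-two boundary vertices (contradicting Lemma~\ref{4corners}), and then runs a two-case analysis on supporting lines of $R_v$ to exclude an extraneous crossing line. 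Your proposed ``coherent limit of the finite chord-diagram realizations of the $G_n$'' would additionally need the uniqueness of the cyclic order (Proposition~\ref{prop:unique_cyclic_order}) to make the diagrams of consecutive $G_n$ compatible, and still would not by itself deliver the face property above. Until that step is carried out, the equivalence with (iv) -- and, in your scheme, the recovery of (i) from (ii) and (iii) -- remains unproved.
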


We regard Theorem \ref{theorem:infinite} as the natural extension of the concept of squaregraph to the infinite case, as a number of intuitive concepts carry over from the finite to the infinite. This does not preclude that some countable median graphs beyond infinite squaregraphs can also be represented in the hyperbolic plane as dual graphs of line arrangements. For instance, it is not difficult to see that every countable tree is the dual of a locally finite and triangle-free line arrangement in the hyperbolic plane.

From Theorem~\ref{theorem:infinite} we conclude that any finite squaregraph is the dual of a finite triangle-free arrangement of lines in the hyperbolic plane. However, the
arrangements whose dual graphs host infinite locally finite squaregraphs as connected components are not necessarily locally finite, the simplest example illustrating this ``anomaly" being the square grid $\mathbb{Z}^2.$  Figure~\ref{fig:twoarrangements} presents two  such non locally finite arrangements for  $\mathbb{Z}^2.$ The dual graph of the first arrangement has five
different connected components, one of which is the square grid and
the other four are infinite paths. The dual graph of the second arrangement has the sixth connected component composed of a single cell.

\begin{figure}[h]
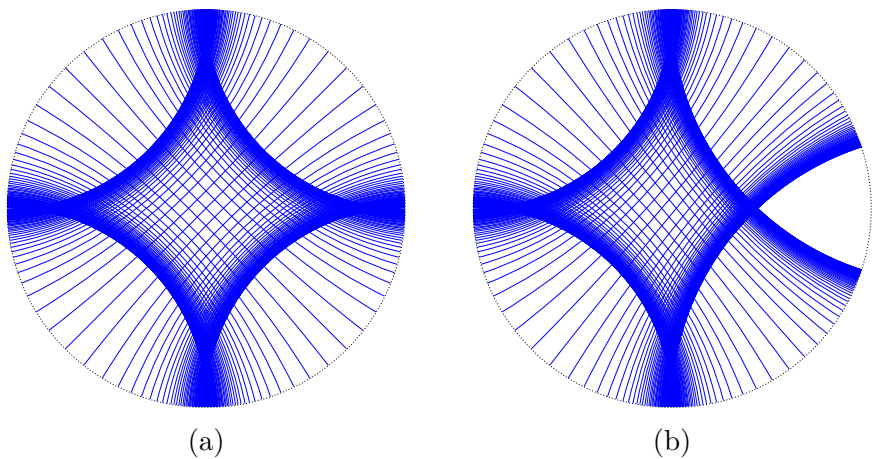

\begin{tabular}{c @{\qquad} c}
\includegraphics[width=0.35\textwidth]{grid-dual}&
\includegraphics[width=0.35\textwidth]{grid-dual-2}\\
(a)&(b)\\
\end{tabular}
\caption{Two different arrangement representations of the square grid.}
\label{fig:twoarrangements}
\end{figure}

\begin{figure}[h]
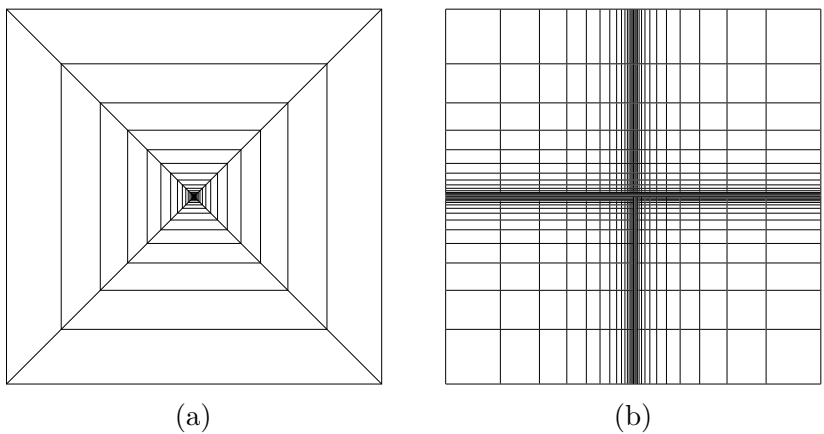

\begin{tabular}{c @{\qquad} c}
\includegraphics[width=2in]{squaretunnel}&
\includegraphics[width=2in]{squarepillows}\\
(a)&(b)\\
\end{tabular}
\caption{Two infinite planar graphs, with quadrilateral faces and no inner vertex of degree less than four, that should not be considered to be infinite squaregraphs.}
\label{fig:squaretunnel}
\end{figure}

The example in Figure~\ref{fig:squaretunnel}a should not be considered a squaregraph, although it
meets many of the defining criteria: it is a plane median graph with finite vertex degrees, every
open connected component of its complement is either unbounded or a quadrilateral, and every
vertex either lies on the boundary or has degree at least four. However, it fails the local finiteness
requirement of Theorem \ref{theorem:infinite}(i), the finite induced subgraphs $B_r(v)$ are not squaregraphs for any $r\ge 2,$
and the family of curves it is dual to do not form a line arrangement (they consist of two crossing
lines and a collection of concentric closed curves surrounding the crossing point). The example in
Figure~\ref{fig:squaretunnel}b again superficially resembles a squaregraph, and is the dual of a
triangle-free line arrangement, but the line arrangement is not locally finite and the graph is not connected.

%The example in Figure~\ref{fig:squaretunnel}a, however, should not be considered a squaregraph, although it meets many of the defining criteria: it is a connected planar median graph with finite vertex degrees, every open connected component of its complement is either unbounded or a quadrilateral, and every vertex either lies on the boundary or has degree at least four. However, it fails the local finiteness requirement of Theorem~\ref{theorem:infinite}(i), the finite induced subgraphs $B_r(v)$ are not squaregraphs for any $r\ge 2$, and the family of curves it is dual to do not form a line arrangement (they consist of two crossing lines and a collection of concentric closed curves surrounding the crossing point). The example of Figure~\ref{fig:squaretunnel}b again superficially resembles a squaregraph, and is the dual of a triangle-free line arrangement, but the line arrangement is not locally finite and the graph is not connected.

It is tempting to attempt to generalize the description of squaregraphs as duals of hyperbolic line
arrangements, and consider instead pseudoline arrangements. However, the dual of any triangle-free pseudoline arrangement is isomorphic to the dual of a hyperbolic
line arrangement having the same ends of lines in the same cyclic order. Therefore, pseudolines give no added generality.

\section{Embedding into products of trees}
\label{sec:tree-products}

One of the important issues in metric graph theory, and more
generally, in metric geometry, is a succinct representation of
graphs (or metric spaces), which can be of use in the design of
efficient algorithms for solving various metric problems on the
input graph or space. For instance, one may search for an exact or
approximate isometric embedding of graphs or spaces into
low-dimensional Euclidean or $l_1$-spaces or into the Cartesian
product of a small number of trees; labelings of the vertices of the
graph by the coordinates of such an embedding can be used to form
data structures for efficient navigation in these graphs
\cite{ChDrVa_CAT}. However, as is shown in \cite{BaVdV}, for every
$k\ge 3$ the problem of recognizing the graphs that are
isometrically embeddable into the Cartesian product of $k$ trees is
NP-complete, even when restricted to cube-free median graphs. In
fact, a median graph $G$ is isometrically embeddable into the
Cartesian product of $k$ trees if and only if the incompatibility
graph of convex splits is $k$-colorable \cite{BaVdV}. Moreover,
every connected graph is the incompatibility graph of some median
graph. In the case of cube-free median graphs, the corresponding
incompatibility graph is triangle-free, but this property alone is
not enough to ensure a low chromatic number \cite{Nill}. On the
other hand, it is shown in \cite{Epp_lattice} how to embed a partial
cube optimally into the Cartesian product of the least number of
paths. In a companion paper \cite{BaChEpp}, it will be proven that
the graphs isometrically embeddable into the Cartesian product of
two trees are exactly the cube-free median graphs not containing
induced odd cogwheels. In particular, this shows that the
squaregraphs in which all inner vertices have even degree (even
squaregraphs) can be embedded into the Cartesian product of two
trees. For other classes of plane graphs one has similar results,
for example, ``benzenoid systems" (i.e., plane graphs with all inner
faces of length six and all inner vertices of degree three) can be
isometrically embedded into the Cartesian product of three trees
\cite{Ch_benzen}. This result directly led to a linear time
algorithm for computing the so-called Wiener number of benzenoids
\cite{ChKl}.

We will employ the facts that finite squaregraphs $G$ are plane median
graphs such that the convex splits of $G$ induce a 2-compatible
circular split system on the boundary cycle of $G$. This establishes
a bijection between the embeddings of $G$ into the product of $k$
trees and the colorings of the underlying triangle-free circle graph
of those circular splits with $k$ colors. Since it is known that
triangle-free circle graphs are 5-colorable but not necessarily
4-colorable~\cite{Ag,JeTo}, we obtain the main result of this paper:

\begin{figure}[t]
\centering\includegraphics[height=3.7in]{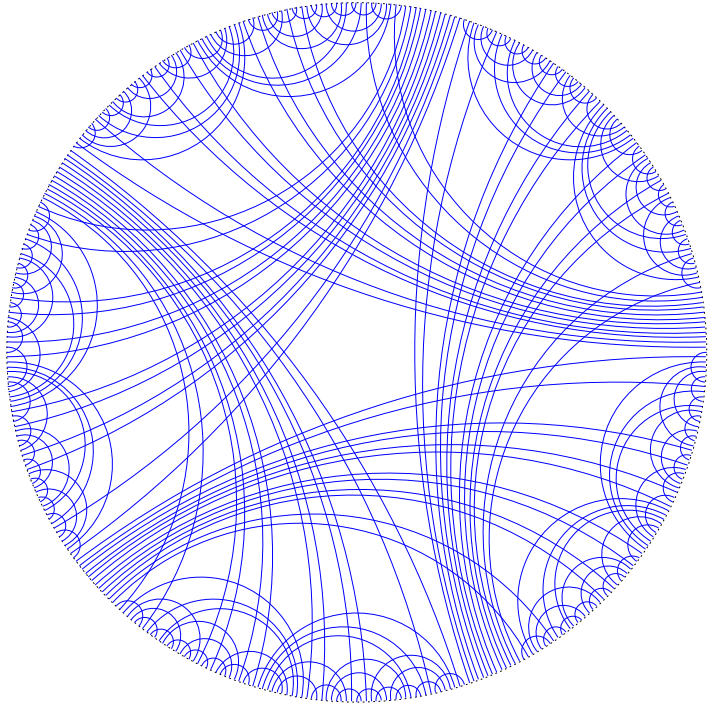} \caption{Ageev's
5-chromatic  graph of 220 chords (drawn hyperbolically).}
\label{fig:220chords}
\end{figure}

\begin{figure}[t]
\centering\includegraphics[height=3.7in]{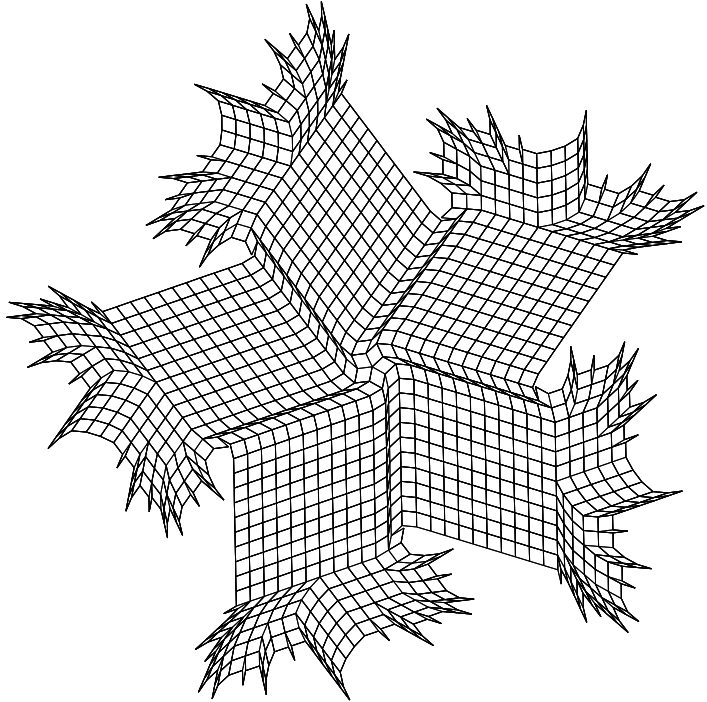} \caption{Squaregraph
dual to Ageev's graph. For the algorithms used to construct this drawing, see~\cite{EppWor-08}.}
\label{fig:dual220chords}
\end{figure}

\begin{theorem}
\label{theorem:five_trees}
Every finite squaregraph $G$ can be isometrically embedded
into the Cartesian product of at most five trees. There exist
2-connected squaregraphs with maximum degree 5 that cannot be
embedded into the Cartesian product of just four trees. A
squaregraph not containing any induced (or isometric) $2\times 2$
grid (i.e., $K_{1,2}\Box K_{1,2}$) can be isometrically embedded
into the Cartesian product of at most three trees. $G$ can be isometrically
embedded into the Cartesian product of at most two  trees exactly when $G$ is even,
that is, every inner vertex has even degree.
\end{theorem}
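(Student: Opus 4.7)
The overall plan is to reduce each part of the theorem to a coloring question via the \cite{BaVdV} characterization: a median graph $G$ embeds isometrically into the Cartesian product of $k$ trees if and only if the incompatibility graph $\mathrm{Inc}(\mathcal{S}(G))$ of its convex splits is $k$-chromatic. The central structural point for a finite squaregraph $G$ is that $\mathrm{Inc}(\mathcal{S}(G))$ is a triangle-free circle graph: by Proposition~\ref{prop:median_hull} the boundary cycle $C$ of $G$ median-generates $G$, so by the remark following Proposition~\ref{prop:median_hull_bis} the graph $\mathrm{Inc}(\mathcal{S}(G))$ is isomorphic to the incompatibility graph of the traces of the convex splits on $C$, and by Proposition~\ref{prop:unique_cyclic_order} these traces form a 2-compatible circular split system, whose incompatibility graph is a triangle-free circle graph.

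For part~(1), Ageev's theorem that every triangle-free circle graph is 5-colorable (\cite{Ag}; see also \cite{JeTo}) then immediately yields the desired embedding. For part~(2), I would take Ageev's 5-chromatic triangle-free chord diagram on 220 chords (Figure~\ref{fig:220chords}) and form its planar dual, which by the geometric duality of Section~\ref{sec:line-arrangements} is a finite squaregraph (Figure~\ref{fig:dual220chords}) whose incompatibility graph of convex splits is precisely Ageev's graph; hence by the necessity direction of \cite{BaVdV} this squaregraph does not embed into any product of four trees. Its 2-connectedness and the bound of $5$ on the maximum degree can be read directly from the chord diagram.

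For part~(3), the key structural step is to argue that $G$ contains no induced (nor isometric) $2\times 2$ grid $K_{1,2}\Box K_{1,2}$ precisely when $\mathrm{Inc}(\mathcal{S}(G))$ contains no induced $K_{2,2}$: an induced $K_{2,2}$ supplies two pairs of incompatible splits $\{\sigma_1,\sigma_2\}$ and $\{\tau_1,\tau_2\}$ with every $\sigma_i$ incompatible with every $\tau_j$ but no incompatibilities within a pair, and by the median property the intersections of the corresponding halfspaces and the medians of suitably chosen witness vertices assemble into a convex $3\times 3$ grid in $G$. It then suffices to show that a triangle-free, $K_{2,2}$-free circle graph is $3$-colorable, which can be obtained by induction on the number of chords, exploiting the fact that girth-$5$ chord diagrams are sufficiently sparse. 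Applying \cite{BaVdV} then yields the embedding into three trees.

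For part~(4), if an inner vertex $v$ has odd degree $d \ge 5$, then the $d$ edges incident with $v$ belong to $d$ pairwise distinct parallel classes, and each pair of consecutive edges at $v$ spans a square of $G$, so their parallel classes are incompatible. This yields a closed walk of odd length $d$ in $\mathrm{Inc}(\mathcal{S}(G))$, forcing an odd cycle and hence non-bipartiteness, so by \cite{BaVdV} the graph $G$ cannot embed into two trees. Conversely, when $G$ is even, the companion paper \cite{BaChEpp} shows that two trees suffice, since it characterizes graphs embeddable into a product of two trees as the cube-free median graphs free of induced odd cogwheels, both of which conditions hold trivially for an even squaregraph. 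The main obstacle lies in part~(3), namely the detailed combinatorial argument that triangle-free, $K_{2,2}$-free circle graphs admit a $3$-coloring, together with the median-graph reconstruction of a $3\times 3$ grid from an induced $K_{2,2}$ in $\mathrm{Inc}(\mathcal{S}(G))$.
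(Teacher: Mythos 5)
Your overall architecture matches the paper's: reduce each assertion to a coloring statement about the triangle-free circle graph $\mathrm{Inc}(\mathcal{S}(G))$ via the characterization of \cite{BaVdV}, use the $5$-colorability of triangle-free circle graphs for the upper bound, use the equivalence between induced $2\times 2$ grids in $G$ and induced $4$-cycles in $\mathrm{Inc}(\mathcal{S}(G))$ (the paper's Lemma~\ref{2x2}) together with the $3$-colorability of girth-$5$ circle graphs for the three-tree bound, and use bipartiteness of $\mathrm{Inc}(\mathcal{S}(G))$ for the two-tree case. Two smaller remarks: the paper invokes \cite{Ko} for $5$-colorability and simply cites \cite{Ag1} for the girth-$5$ $3$-colorability, so your plan to reprove the latter ``by induction on the number of chords, exploiting sparsity'' is both unnecessary and, as sketched, far from a proof --- but since the result is citable this is not a real gap. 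Likewise your converse in part~(4) via the companion paper \cite{BaChEpp} is legitimate (the paper itself advertises that route in Section~\ref{sec:tree-products}), although the paper also supplies a self-contained chord-diagram argument in Lemma~\ref{2trees}.

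The genuine gap is in part~(2). Taking the dual of Ageev's $220$-chord diagram does give a $2$-connected squaregraph that cannot be embedded into the product of four trees, but your claim that the bound of $5$ on the maximum degree ``can be read directly from the chord diagram'' is unsupported: an inner vertex of degree $d$ in the dual squaregraph corresponds to a cell of the arrangement bounded by $d$ chords, and nothing in Ageev's construction forces every cell to be bounded by at most five chords. The paper makes no such claim either; instead it devotes the entire second half of its proof to a degree-reduction argument. Starting from any $2$-connected squaregraph $G_0$ with $r$ vertices of degree greater than $5$ that does not embed into four trees, it repeatedly selects a short convex path through a high-degree vertex, extends it to a minimal separating convex path $P$, and expands along $P$ by inserting a new zone; this keeps the graph a $2$-connected squaregraph, keeps $G_i$ a median homomorphic image of $G_{i+1}$ (so non-embeddability into four trees is preserved, since a $4$-tree embedding would push forward along the homomorphism), and strictly decreases the number of vertices of degree greater than $5$, terminating after at most $r$ steps with a counterexample of maximum degree $5$. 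Without an argument of this kind, your proposal only establishes the weaker statement that some $2$-connected squaregraph fails to embed into four trees, not the stated degree-$5$ refinement.
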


The preceding result result immediately extends to the infinite case, in particular to 	
infinite squaregraphs:

\begin{theorem}
\label{theorem:five_trees_infinite}
Every median graph $G$ not containing any induced (or isometric)
cube, $K_2\Box K_{1,3}$, or suspended cogwheel can be isometrically
embedded into the Cartesian product of at most five trees. If, in addition, the
$2\times 2$ grid is a forbidden induced (or isometric) subgraph, then an
isometric embedding into the Cartesian product of at most three trees is 	
guaranteed. Finally, $G$ can be isometrically embedded into the Cartesian product 	
of at most two trees exactly when $G$ does not contain an induced odd 	cogwheel.
\end{theorem}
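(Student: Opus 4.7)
The plan is to transfer Theorem \ref{theorem:five_trees} from finite squaregraphs to the infinite setting by combining the Bandelt--Van de Vel tree-product criterion with a compactness argument for graph coloring. Recall that a median graph $G$ embeds isometrically into a Cartesian product of $k$ trees if and only if the incompatibility graph Inc$(\mathcal{S}(G))$ of its convex splits is $k$-colorable: from a proper $k$-coloring, each color class $\mathcal{S}_i$ is a system of pairwise compatible convex splits, so quotienting $G$ by the equivalence ``not separated by any split in $\mathcal{S}_i$'' yields a tree $T_i$, and the map $v\mapsto([v]_1,\ldots,[v]_k)$ is isometric because the discreteness of $\mathcal{H}(G)$ (Proposition \ref{prop:Mulder_Schrijver}) makes $d_G(u,v)$ a finite sum of the $T_i$-distances. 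This construction is valid regardless of the cardinality of $G$.

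The core step is then to show that Inc$(\mathcal{S}(G))$ admits a proper 5-coloring (respectively 3- or 2-coloring under the stronger hypotheses). By the de Bruijn--Erd\H{o}s theorem it suffices to color every finite subgraph of Inc$(\mathcal{S}(G))$. Any such subgraph is described by finitely many convex splits $\sigma_1,\ldots,\sigma_n$ together with a list of incompatibility edges among them. For each $\sigma_j$ fix an edge $e_j$ in its $\Theta$-class, and for each marked incompatibility pair $\{\sigma_i,\sigma_j\}$ fix a 4-cycle $C_{ij}$ of $G$ witnessing the crossing. Let $G'$ be the convex hull in $G$ of the finite vertex set spanned by these edges and cycles. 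By (i)$\Leftrightarrow$(ii) of Proposition \ref{prop:forbidden-infinite}, $G'$ is a convex subgraph of $G$ that is a finite squaregraph. Each $\sigma_j$ restricts nontrivially to a convex split of $G'$, and each marked incompatibility is still witnessed in $G'$; hence a proper coloring of Inc$(\mathcal{S}(G'))$ pulls back to one of the chosen finite subgraph of Inc$(\mathcal{S}(G))$.

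The finite case (Theorem \ref{theorem:five_trees}) now furnishes the required coloring of Inc$(\mathcal{S}(G'))$: unconditionally by 5 colors; by 3 colors if $G$ has no induced (or isometric) $2\times 2$ grid, a property that passes to the convex subgraph $G'$; and by 2 colors if $G$ has no induced odd cogwheel, since every finite squaregraph subgraph $G'$ is then even. Assembling via de Bruijn--Erd\H{o}s completes the existence direction in all three cases. For the converse in the 2-tree assertion, any induced odd $k$-cogwheel $H$ of $G$ exhibits $k$ distinct $\Theta$-classes of $H$ whose consecutive pairs (around the rim) cross in $H$ and therefore in $G$---crossing precludes $\Theta$-identification, so these classes remain distinct as $\Theta$-classes of $G$. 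This yields a graph homomorphism from the odd cycle $C_k$ to Inc$(\mathcal{S}(G))$, producing an odd closed walk and hence an odd cycle in Inc$(\mathcal{S}(G))$; thus Inc$(\mathcal{S}(G))$ is not bipartite, and $G$ is not 2-tree embeddable.

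The principal technical obstacle is justifying the first-paragraph correspondence in the possibly uncountable infinite setting: one must check that each color class of pairwise compatible splits really organizes into a tree quotient of $G$ without any cardinality bound, and that the coordinate-wise distances in $T_1\times\cdots\times T_k$ sum correctly to $d_G$ on every pair of vertices. Both points rest on the discreteness built into median graph structure through Proposition \ref{prop:Mulder_Schrijver}: any two vertices are separated by only finitely many convex splits, so the sum defining $d_G$ has finite support. The secondary point---that convex hulls of finite vertex sets of $G$ are finite squaregraphs---is exactly (i)$\Leftrightarrow$(ii) of Proposition \ref{prop:forbidden-infinite}, anchoring the compactness step to the already-proven finite theorem.
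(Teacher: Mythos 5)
Your proposal is correct and follows essentially the same route as the paper: reduce $k$-colorability of Inc$(\mathcal{S}(G))$ to its finite subgraphs via de Bruijn--Erd\H{o}s, realize each finite subgraph inside Inc$(\mathcal{S}(G'))$ for a finite convex hull $G'$ that is a finite squaregraph by the forbidden-subgraph characterization, and invoke Theorem~\ref{theorem:five_trees} (the paper picks one vertex in each of the four quadrants of every incompatible pair rather than your edges and witnessing 4-cycles, and handles the odd-cogwheel converse by passing through Lemma~\ref{2trees} on a finite convex hull rather than by your direct odd-closed-walk argument, but these are cosmetic variations). Your extra care in justifying the Bandelt--Van de Vel coloring criterion in the unrestricted infinite setting via discreteness of the halfspace hypergraph is a point the paper leaves to the cited reference, but it does not change the structure of the argument.
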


The 2-connected squaregraphs not containing $2\times 2$ grids are
negatively curved. For a 2-connected plane graph $G$ the notion of
curvature is gleaned from the Gauss-Bonnet formula for piecewise
Euclidean 2-complexes. Specifically, the {\it curvature} at a vertex
$x$ with degree $deg(x)$ in $G$ equals
$$1-deg(x)/2+\sum_{F}1/|F|, \mbox{\hspace*{0.3cm} if } x \mbox{ is
an inner vertex of } G,$$ and
$$1/2-deg(x)/2+\sum_{F} 1/|F|, \mbox{ \hspace*{0.3cm} if } x \mbox{ is on the boundary
cycle of } G,$$ where the sums extend over all faces  $F$  incident
with  $x.$

\medskip\noindent
Note that Ishida's ``combinatorial'' curvature, often employed for
polyhedral graphs, is a dual notion. The key issue of graph
curvature is whether the curvature of all vertices is always of the
same type (either negative, or nonpositive, or zero, or
nonnegative, or positive, respectively). The curvature of every
finite 2-connected squaregraph is nonpositive, and this also extends
to the infinite case under the assumption of finite vertex degrees.
Specifically, the curvature of a vertex $v$ on the boundary cycle
$C$ equals  $1/4-deg(v)/4+1/|C|,$ which attains $0$ only for the
$4$-cycle, and of an inner vertex $u$ equals  $1-deg(u)/4,$ which
attains $0$ exactly when $deg(u)=4.$ Therefore a 2-connected
squaregraph $G$ has negative curvature exactly when it is not the
4-cycle and has no 4-cogwheel (that is, the $2\times 2$ grid).

\section{Proofs for Section~\ref{sec:split-to-median}}
\label{proofs:split-to-median}

\bigskip\noindent
{\bf Proof of Proposition~\ref{prop:Helly*}:} If $\mathcal H$ is $^*$Helly, then the collection of the hyperedges from
$\mathcal H$ containing at least two of the points $u,v,w$ is anchored and
pairwise intersecting, whence its intersection is nonempty. Conversely, assume
that the triplet condition is satisfied. Let $\mathcal F$ be any anchored and
pairwise intersecting collection of hyperedges. For any $u\in X,$ the collection of
hyperedges from $\mathcal F$ containing $u$ misses only finitely many hyperedges from
$\mathcal F$. Since the finite set ${\mathcal F}'$ of these hyperedges satisfies
the triplet condition (as does any subfamily of $\mathcal F$), the intersection of ${\mathcal F}'$ is nonempty by the
second part of the proof of \cite[Theorem 10 of Ch. 1]{Be}, which uses induction
on $\#{\mathcal F}'.$ Choose any $v$ from $\bigcap {\mathcal F}'.$ Then, by the
same argument, the finite set ${\mathcal F}''$ of hyperedges from $\mathcal F$
separating $u$ and $v$ intersects in at least one point $w.$ By the choice of $u,v,w$ every hyperedge of
$\mathcal F$ contains at least two of $u,v,w,$ and therefore $\bigcap{\mathcal F}$
is nonempty.

\bigskip\noindent
{\bf Proof of Proposition~\ref{prop:Mulder_Schrijver}:} (i)$\Rightarrow$(ii): If $\mathcal H$ did not separate two distinct points $x$
and $y,$ then we could add the complementary pair $\{ x\}, V\setminus \{ x\}$ to $\mathcal E$ without violating the $^*$Helly property.

(ii)$\Rightarrow$(iii): As in the finite case \cite{BaHe} define the segment
$$u\circ v=v\circ u=\cap \{ E\in {\mathcal E} \mid u,v\in E\}$$
for each pair $u,v$ of points. Then $u\circ v\cap v\circ w\cap w\circ u\ne \emptyset$ by
Proposition~\ref{prop:Helly*}. If there were two different points, $x$ and $y,$ in this
intersection, then we could find some $F\in {\mathcal E}$ separating $x$ and $y.$
Necessarily, $F$ would include two of $u,v,w$ and hence a segment
containing  both $x$ and $y,$ a contradiction. Therefore the intersection is a
singleton. In particular, $u\circ u=\{ u\}.$ Monotonicity, that is,
$w\in u\circ v \Rightarrow u\circ w\subseteq u\circ v,$ is trivial by definition
of the segments. We conclude that the segments meet Sholander's conditions for
a median algebra; see \cite[Theorem 2.1]{BaHe}. If some segment contained an infinite
chain $u_1,u_2,\ldots$ such that
$u\circ v \supseteq u_1\circ v\supseteq u_2\circ v\supseteq\ldots,$ then infinitely
many members of ${\mathcal E}$ would separate $u$ and $v$, contradicting the assumption that $\mathcal H$ is discrete.  Hence the median algebra
is itself discrete and constitutes a median graph $G$ on $V$ where the intervals coincide
with the corresponding segments. It remains to show that $\mathcal H$ comprises all
halfspaces of $G.$ By definition of
the segments, every member of $\mathcal E$ is a halfspace of $G.$ Conversely, for
each halfspace $F$ of $G$ there exists an edge $uv$ of $G$ such that $F$ is the unique
halfspace separating $u$ and $v,$ which must belong to $\mathcal H$ because $\mathcal H$
separates the points.

(iii)$\Rightarrow$(i):  By Corollary \ref{Helly_convex}, $\mathcal H$ has the $^*$Helly property.
To show that $\mathcal H$ is maximal, suppose by way of contradiction that we could extend
$\mathcal H$ to a larger $^*$Helly copair hypergraph by adjoining two complementary sets $A$ and
$B$ where $A$ is not convex in $G$.  Because $A$ is non-convex, there exist $u,v$ in $A$ such that a shortest path from $u$ to $v$ passes through some neighbor $x$ of $u$ belonging to $B$.
Let ${\mathcal D}_u$ and ${\mathcal D}_v$ be the sets of halfspaces
containing the edge $ux$ and the interval $I(v,x),$ respectively.
Then the collection ${\mathcal D}:={\mathcal D}_u\cup {\mathcal D}_v\cup \{ A\}$ intersects in pairs and is anchored. However,
$$\bigcap{\mathcal D}=\bigcap {\mathcal D}_u\cap \bigcap {\mathcal D}_v\cap A=\{ u,x\}\cap I(v,x)\cap A=\emptyset,$$
contradicting the $^*$Helly property.

\bigskip\noindent
{\bf Proof of Proposition~\ref{prop:Nica}:} For $\mathcal D$ as described in the proposition, one can find a finite set $Z$
intersecting every member of $\mathcal D$.  Pick any $z\in Z$. For each pair $A, B \in {\mathcal H}\setminus{\mathcal D}$
of complementary sets, choose the one that intersects each member of the pairwise intersecting system
${\mathcal D}\cup \{ Z\}$ and -- in case that both $A$ and $B$ meet
this criterion -- that additionally contains $z$. The resulting set family $\mathcal F$ is clearly anchored, and as we now show it is pairwise intersecting. For, the added sets by construction have nonempty intersections with each member of $\mathcal D$. And, if two sets $A'$ and $A''$ from ${\mathcal F}\setminus{\mathcal D}$  were disjoint,
then one of the sets, say $A'$, would not contain $z$. But we can only include such a set $A'$ in $\mathcal F$ when its complementary set $X\setminus A'$ is disjoint from some member of $\mathcal D$, and if this were the case then $A''\subset X\setminus A'$ would also be disjoint from that member, contradicting the fact that each member of $\mathcal F$ intersects each member of $\mathcal D$. This contradiction shows that $\mathcal F$ is pairwise intersecting, and it is clearly maximal with this property as it includes one member from each complementary pair in $\mathcal H$. This proves the first assertion.

Note that the augmentation of $\mathcal H$ to $[{\mathcal H}]$ does not fill empty pairwise intersections, which entails that compatibility of the corresponding splits is preserved.
By definition, $[{\mathcal H}]$ is a copair hypergraph on the extended set $V=[X]$ such that every member of $[{\mathcal E}]$ restricts to a
member of $\mathcal E$ on $X.$ Then the trace on $X$ of any anchored pairwise intersecting subset of $[{\mathcal E}]$ is an anchored pairwise
intersecting subset of $\mathcal E$, which has a nonempty intersection in $[X]$ by construction.
If $v_{\mathcal F}$ and $w = v_{{\mathcal F}'}$ are new vertices associated with two maximal anchored pairwise intersecting collections ${\mathcal F}$ and ${\mathcal F}'$, then there are finite sets $Z$ and $Z'$ met by all members of ${\mathcal F}$ and ${\mathcal F}'$, respectively. Hence the members of $[E]$ that separate $v_{\mathcal F}$ from $w$ all intersect the finite set $Z \cup Z'$ and hence must be finite in number because $H$ is discrete. If instead $w$ is a pre-existing vertex, then by a similar argument (letting $\{w\}$ play the role of $Z'$) there are again only finitely many members of $[E]$ that separate $v_{\mathcal F}$ from $w$.
Therefore $[{\mathcal H}]$ is a discrete
$^*$Helly copair hypergraph, which obviously separates the points.
By Proposition~\ref{prop:Mulder_Schrijver}, $[{\mathcal H}]$ is the halfspace
hypergraph of a median graph $G$ on $V.$ Since every new vertex $v\in V\setminus X$ is the unique vertex $v=v_{\mathcal F}$ of $[{\mathcal H}]$ that
fills some empty intersection of a maximal anchored pairwise intersecting collection ${\mathcal F}\subseteq {\mathcal E},$ no trace of
$[{\mathcal E}]$ on any proper subset $W$ of $V$ with $X\subseteq W$ could yield a $^*$Helly copair hypergraph. Hence the smallest median
subgraph of $G$ with vertex set extending $X$ must be all of $G.$

\section{Proofs for Section~\ref{sec:circular-split}}
\label{proofs::circular-split}

We start by showing that squaregraphs are median graphs. This
follows from a more general bijection between median graphs and
1-skeletons of cubical complexes with global nonnegative curvature
\cite{BaCh_survey,Ch_CAT,Ro}. Here we will give a self-contained
proof of this property, which will make use of an elementary
counting argument.

\begin{lemma}\label{4corners}\cite{SoZaPr}
Every finite 2-connected squaregraph $G$ contains at least four vertices of degree 2.  Consequently, a finite squaregraph $G$
which is not a tree has at least four vertices of degree at most 2. The plane subgraph $G'$ of a finite squaregraph $G$ which is induced by all vertices of $G$ lying either on some simple cycle $C'$ or inside the plane region bounded by $C'$ is a 2-connected squaregraph. Consequently, any articulation point is on the boundary and any 4-cycle of a squaregraph $G$ is an inner face.
\end{lemma}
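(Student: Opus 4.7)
The plan is to prove the four assertions in the order they are stated: first the Euler-formula estimate for 2-connected squaregraphs, then the planar argument that any region enclosed by a simple cycle is itself a squaregraph, and finally to derive the two ``consequently'' assertions.

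For the first claim, apply Euler's formula to the 2-connected squaregraph $G$ with outer boundary cycle $C$ of length $b$. Counting face--edge incidences with each inner face a $4$-cycle gives $4F_i = 2E - b$, and substituting into $V - E + F_i + 1 = 2$ yields $2E = 4V - b - 4$. Splitting vertices into outer (on $C$) and inner (necessarily of degree $\geq 4$), writing $\deg(v) = 2 + i_v$ for outer $v$ with $i_v\geq 0$ counting the interior edges incident to $v$, and letting $n_k^{\mathrm{out}}$ and $n_k^{\mathrm{in}}$ denote the number of outer and inner vertices of degree $k$, a direct substitution produces
$$n_2^{\mathrm{out}} \;=\; 4 \;+\; \sum_{k\geq 4}(k-3)\,n_k^{\mathrm{out}} \;+\; \sum_{k\geq 4}(k-4)\,n_k^{\mathrm{in}} \;\geq\; 4.$$
Since every degree-$2$ vertex of a 2-connected squaregraph must be outer, this gives $n_2 \geq 4$.

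For the third claim, let $C'$ be a simple cycle in $G$ and $G'$ the induced subgraph on the vertices of $G$ lying on $C'$ or inside the disk it bounds. By planarity, no edge of $G$ crosses $C'$, so every vertex $v$ strictly inside $C'$ satisfies $\deg_{G'}(v) = \deg_G(v)$; because $v$ is enclosed by $C'$ it does not lie on the unbounded face of $G$, so $v$ is an inner vertex of $G$ with $\deg_G(v) \geq 4$. The inner faces of $G'$ are exactly the $4$-faces of $G$ inside $C'$. For 2-connectivity, observe that around each inner vertex $v$ of $G'$ the consecutive neighbors are joined via the opposite vertex of the shared $4$-face, so the neighbors of $v$ form a cycle in $G'-v$; propagating this across the $4$-face tiling of the interior disk and combining with the connectivity of $C'$ rules out cut vertices in $G'$.

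The remaining two claims are consequences. For the second, assume $G$ is not a tree and consider its block-cut tree $T$. By the third claim each 2-connected block $B$ of $G$ is a 2-connected squaregraph, so by the first it has $\geq 4$ degree-$2$ corners, of which at most $\deg_T(B)$ can be cut vertices of $G$. Bridge leaf blocks supply one degree-$1$ vertex each, 2-connected leaf blocks supply $\geq 3$ degree-$2$ non-cut corners each, and 2-connected internal blocks supply $\geq \max(0,\,4 - \deg_T(B))$; a brief case split on the number of 2-connected leaf blocks, using the tree identity $|\mathrm{leaves}(T)| = 2 + \sum_{v\in\mathrm{int}(T)}(\deg_T(v)-2)$ to bound $\deg_T(B)$ for some 2-connected internal block when all leaves are bridges, gives $\geq 4$ degree-$\leq 2$ vertices in every case. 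For the fourth, if $v$ were an articulation point of $G$ not on the outer boundary, enclose $v$ by the simple cycle formed by the boundary of the union of the $4$-faces at $v$; by the third claim the enclosed induced subgraph is 2-connected, so $v$ is not a cut vertex there, and since anything in $G - v$ is connected to this subgraph via $C'$ (which avoids $v$), $v$ cannot be a cut vertex of $G$ either---a contradiction. Likewise, if a $4$-cycle $C$ of $G$ were not an inner face, the third claim applied to $C$ and its interior would produce a 2-connected squaregraph with $C$ as its outer boundary and, by the first claim, at least $4$ degree-$2$ vertices on $C$; this forces the interior of $C$ to be empty, so $C$ is an inner face.

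The main obstacle is the 2-connectivity part of the third claim: the planar/topological picture is clear, but turning the local ``cycle around each inner vertex'' observation into a clean global argument---and ensuring that the enclosing cycle used in the fourth claim is genuinely a simple cycle rather than a closed walk---requires care, exploiting that squaregraphs are cube-free median graphs so that no shortcut collapses the local cycle around an inner vertex. The block-tree case analysis in the second claim is intricate but essentially routine.
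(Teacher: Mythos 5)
Your Euler-formula count for the first assertion is the paper's own argument (the paper writes it as the inequality $2m\ge 4(n-b)+3(b-n_2)+2n_2$ and concludes $n_2\ge 4n-b-2m=4$; your identity is an equivalent, slightly sharper form), and your treatment of the two ``consequently'' statements matches the paper's. For the second assertion you take a genuinely different route: the paper picks one $2$-connected block $B$, and for each of its four degree-$2$ corners that happens to be an articulation point it walks into the hanging branch and finds, via an end block, a distinct vertex of degree at most two there; your block-cut-tree counting with the leaf identity achieves the same conclusion and is valid (each block's non-cut low-degree vertices are pairwise distinct across blocks), though it is more case-heavy than the paper's one-line association.

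The real problem is the third assertion, and you have correctly located it yourself. Your rim argument does show that no \emph{inner} vertex of $G'$ can be a cut vertex (its neighbours lie in one component of $G'-v$, and every other vertex reaches a neighbour of $v$ along a path truncated before $v$). But ``propagating this across the $4$-face tiling and combining with the connectivity of $C'$'' does not rule out a cut vertex $v$ \emph{on} $C'$: connectivity of $C'-v$ only shows the boundary vertices stay together, and you still must exclude a ``pocket'' of inner vertices attached to the rest of $G'$ only through $v$. That exclusion needs its own argument (e.g.\ another Euler count applied to the pocket plus $v$, whose non-$v$ vertices all have degree $\ge 4$, or the paper's route: induct on the number of enclosed faces by locating a degree-$2$ corner $u$ of $C'$ with its face $tuwx$ and replacing $C'$ by $C'\oplus F$). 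Separately, your suggestion to justify the simple-cycle structure of the rim of an inner vertex by ``exploiting that squaregraphs are cube-free median graphs'' is circular in this paper's logical order: Lemma~\ref{median} (squaregraphs are median) is itself proved using Lemma~\ref{4corners}, so nothing about median graphs is available here; the rim's simplicity must come from planarity and the face structure alone. These two points need to be filled in before the proof is complete.
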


\begin{proof}
Let $f$ denote the number of inner faces of $G,$ $m$ the number of
edges, $n$ the number of vertices, $b$ the number of vertices
incident with the outer face, and $n_2$ the number of vertices of
degree 2. Then $f-m+n=1$ and  $4f+b=2m$ hold according to Euler's
formula and the hypothesis that all faces are quadrangles.
Eliminating $f$ yields $4n-2m-b=4.$ The information on the vertex
degrees is turned into the inequality $2m\ge
4(n-b)+3(b-n_2)+2n_2=4n-b-n_2,$ whence $n_2\ge 4n-b-2m=4,$ as
required.

If $G$ is neither 2-connected nor a tree, then it contains a 2-connected block $B,$ which has four vertices of
degree 2 in $B.$ If any such vertex $u$ is an articulation point of $G,$ then select an end block $B'$ of $G$ that has $u$ as its gate in $B.$
Necessarily, $B'$ has at least one vertex $u'$ of degree 1 or 2 that is not an articulation point. In this way, we obtain four distinct vertices of degrees at most two in $G.$

As for the third assertion of the lemma, the plane graph $G'$ which is enclosed by the chosen cycle $C'$ in the plane is included in some 2-connected block $B$ of $G$ and has $C'$ as
its boundary. Every inner vertex of $G'$ is also an inner vertex of $G$ and hence has degree at least four, and every inner face of $G'$ is an inner face of $G$ and thus a 4-cycle, whence $G'$ is a squaregraph. By virtue of the first assertion of the lemma, $C'$ contains a degree-two vertex $u$ of $G'$ along with its two neighbors $t$ and $w$. If the fourth vertex $x$ of the inner face $F$ to which $t$, $u$, and $w$ belong lies on $C'$, then $C'$ is the modulo 2 sum of $F$ and at most two simple cycles. Since any articulation point of $G'$ would be an inner vertex of $G'$, a straightforward inductive argument shows that $G'$ must be 2-connected. Finally, to prove the fourth assertion, if the given squaregraph $G$ contains some 4-cycle $C'$ that is not an inner face of $G$, then the squaregraph $G'$ enclosed by $C'$ in the plane will have fewer than four vertices of degree two, contradicting the first assertion.
\end{proof}

%Note that the assumption of 2-connectivity in the lemma is necessary: there exist arbitrarily large finite squaregraphs (in fact, trees) with no degree two vertex.

\begin{lemma}\label{median}
Every finite squaregraph $G$ is a cube-free and
$K_2\Box K_{1,3}$-free median graph.
\end{lemma}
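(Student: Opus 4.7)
The two forbidden-subgraph conditions fall out of Lemma~\ref{4corners} via short planarity arguments. For cube-freeness, suppose $Q_3$ were an induced subgraph of $G$. Then all six of its square 2-faces would be 4-cycles of $G$, hence inner faces of $G$ by Lemma~\ref{4corners}, with empty interior in the planar embedding. But any planar drawing of $Q_3$ has exactly six faces by Euler's formula, so one of these 4-cycles must bound the unbounded region of $Q_3$'s embedding and thereby enclose the other four vertices of $Q_3$ in its interior, contradicting the emptiness of inner faces of $G$. For $K_2\Box K_{1,3}$-freeness, the edge joining the two copies of the $K_{1,3}$-center lies on three distinct 4-cycles of $K_2\Box K_{1,3}$; were this graph embedded into $G$, three distinct inner faces of $G$ (again by Lemma~\ref{4corners}) would share a single edge, which is impossible in any plane graph.

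For medianness, I would first observe that $G$ is bipartite: the Euler-formula count in the proof of Lemma~\ref{4corners}, applied to the 2-connected squaregraph enclosed by an arbitrary simple cycle $C$, gives $|C| = 2(n - f)$, which is even. I would then induct on $|V(G)|$, using Lemma~\ref{4corners} (specifically its end-block argument) to select a vertex $v$ of degree at most two that is not an articulation point. Deleting $v$ yields a plane graph $G'$ that remains a squaregraph (inner-vertex degrees are preserved, and the only inner face possibly lost is a 4-cycle that merges with the outer face), hence median by induction. If $\deg v = 1$, re-attaching $v$ as a pendant trivially preserves medianness. If $\deg v = 2$ with neighbors $u_1, u_2$, the local plane structure at the boundary vertex $v$ forces an inner 4-cycle $v u_1 w u_2$, so $u_1$ and $u_2$ already share the neighbor $w$ in $G'$, distances agree between $G$ and $G'$ on $V(G')$, and the medianness of $G$ follows from that of $G'$ by checking that any median involving $v$ resolves to one of $v, u_1, u_2$.

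The principal obstacle is the degree-$2$ step. Two things must be verified: first, that $v$ genuinely lies on an inner 4-cycle, which the end-block selection from Lemma~\ref{4corners} guarantees by placing $v$ inside a 2-connected block; and second, that re-introducing $v$ does not spoil the uniqueness of medians (no triple of $G$ acquires a second median at $v$ alongside the one inherited from $G'$). Both verifications amount to the structural argument of \cite{SoZaPr}, leveraging the already-established cube-free and $K_2\Box K_{1,3}$-free conditions to rule out the small obstructions that could otherwise manifest as duplicated medians.
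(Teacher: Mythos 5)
Your forbidden-subgraph arguments are fine and essentially coincide with the paper's (which observes in one line that any plane embedding of the cube, of $K_2\Box K_{1,3}$, or of $K_{2,3}$ contains a 4-cycle that is not an inner face, contradicting the last assertion of Lemma~\ref{4corners}). The gap is in the medianness part. Your induction reduces everything to the claim that re-attaching a degree-two vertex $v$ to $u_1,u_2$ (which share the unique common neighbour $w$ in $G'=G\setminus\{v\}$) preserves medianness, and you propose to verify this by ``checking that any median involving $v$ resolves to one of $v,u_1,u_2$,'' with the remaining obstructions ruled out by cube- and $K_2\Box K_{1,3}$-freeness. That verification is exactly the hard content of the lemma, and it does not follow from the local data you cite. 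Take $G'$ to be the domino $K_2\Box K_{1,2}$ with vertices $(i,j)$, $i\in\{0,1\}$, $j\in\{0,1,2\}$, put $u_1=(0,0)$, $u_2=(0,2)$, $w=(0,1)$, and attach a new vertex $v$ to $u_1$ and $u_2$. Then $G'$ is a median squaregraph, $w$ is the unique common neighbour of $u_1$ and $u_2$, distances on $V(G')$ are unchanged, and the resulting graph is bipartite with no induced $K_{2,3}$, no cube and no $K_2\Box K_{1,3}$ (it has only seven vertices); yet the triple $v,(1,0),(1,2)$ has no median at all, since $I(v,(1,0))\cap I(v,(1,2))=\{v\}$ while $v\notin I((1,0),(1,2))$. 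Of course this graph is not a squaregraph (the vertex $(0,1)$ would become an inner vertex of degree three), but that is precisely the point: whether the reattachment preserves medianness depends on global planar information about $G$ that your local analysis around $v$ never brings into play, and deferring it to ``the structural argument of \cite{SoZaPr}'' leaves the lemma unproved.

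The paper closes this step by a different, variational argument: assuming some triple $x,y,z$ has $I(x,y)\cap I(y,z)\cap I(z,x)=\emptyset$ (with the pairwise intersections reduced to singletons), it chooses geodesics $J(x,y)$, $J(y,z)$, $J(z,x)$ whose union bounds a region with the minimum number of inner faces, invokes Lemma~\ref{4corners} to find a degree-two corner $u\notin\{x,y,z\}$ of that region, and slides the geodesic across the unique square at $u$ to strictly decrease the face count --- a contradiction. Existence of these median points then combines with $K_{2,3}$-freeness (via Mulder's characterization) to give medianness. If you want to keep a deletion-based induction, you would have to remove an entire peripheral zone (a whole $\Theta$-class) rather than a single vertex, so that the reattachment becomes a convex expansion in Mulder's sense; single-vertex surgery is too fine an operation to interact controllably with the median property.
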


\begin{proof}
To establish that $G$ is median, suppose by
way of contradiction that $G$ contains a triplet $x,y,z$ such that
$I(x,y)\cap I(y,x)\cap I(x,z)=\emptyset.$ We can suppose without
loss of generality that each pair of intervals intersect only at
their common bounding vertex, for otherwise, if say $I(x,y)\cap I(x,z)\ne
\{x\},$ we could replace $x$ by a vertex from this intersection
while still retaining a contradiction. This implies that $x$, $y$,
and $z$ belong to a common 2-connected component of $G$, whence we
may assume that $G$ is already 2-connected. Now pick three shortest
paths $J(x,y)$, $J(y,z)$, and $J(z,x)$, such that the subgraph $G'$
defined by the cycle $C'=J(x,y)\cup J(y,z)\cup J(z,x)$ contains a
minimum number of inner faces. Since $G'$ is a
squaregraph by the second assertion of Lemma \ref{4corners},  the
boundary cycle $C'$ of $G'$ contains at least four vertices of
degree 2 (in $G'$), by the first assertion of the same lemma. Let
$u$ be such a vertex different from $x,y,z,$ say $u\in J(x,y).$
Denote by $u'$ the vertex opposite to $u$ in the unique inner face
of $G'$ containing $u$. Then replacing in $J(x,y)$ the vertex $u$ by
$u'$ results in a new shortest path $J'(x,y)$ between $x$ and $y$,
and the cycle $J'(x,y)\cup J(y,z)\cup J(z,x)$ defines a squaregraph
$G''$ having fewer inner faces than $G'.$ This contradiction
establishes that $I(x,y)\cap I(y,x)\cap I(x,z)\ne\emptyset$ for all
$x,y,z\in V.$ Now, if $G$ is not median, then it must contain an
induced $K_{2,3}$ \cite{Mu}. However, any plane embedding of $K_{2,3},K_2\Box K_{1,3},$
or the cube has a 4-cycle that is not an inner face, so from Lemma~\ref{4corners}
we infer that $G$ is $K_2\Box K_{1,3}$-free and cube-free.
\end{proof}

The Djokovi\'c relation $\Theta$ \cite{Dj} on a bipartite graph
$G=(V,E)$ is defined on the edges of $G$ by
$$uv \Theta xy \Leftrightarrow \mbox{ either } x\in W(u,v) \mbox{
and } y\in W(v,u), \mbox{ or } y\in W(u,v) \mbox{ and } x\in
W(v,u)$$ for $uv, xy \in E.$ The relation $\Theta$ is always symmetric and idempotent; it is transitive if
and only if the splits $\sigma(uv)$ are convex for all $uv \in E,$
which is equivalent to isometric embeddability of $G$ into some
hypercube \cite{Dj}. That is, $G$ is a partial cube if and only if $\Theta$
is an equivalence relation. On a median graph the Djokovi\'c relation
$\Theta$ coincides with the transitive closure $\Psi^*$ of the
relation $\Psi$ defined by
$$uv\Psi xy \Leftrightarrow \mbox{ either } uv = xy \mbox{ or } uv \mbox{ and } xy \mbox{ are opposite edges of
some 4-cycle,}$$ according to Lemma 1 of \cite{BaCh_cellular}. For
squaregraphs, ``4-cycle" can be replaced by ``inner face" in the
latter definition. For an edge $uv$ of a squaregraph $G,$ we denote
by $\Theta(uv)$ the equivalence class of $\Theta$ containing $uv.$
By $Z(uv)$ we denote the subgraph induced by the union of
$\Theta(uv)$ with all inner faces of $G$ sharing common edges with
$\Theta(uv)$  and call $Z(uv)$ the \emph{zone} of $\Theta(uv).$ Set
$P(u,v)=Z(uv)\cap W(u,v)$ and $P(v,u)=Z(uv)\cap W(v,u).$ Since $G$
is median, $P(u,v)$ and $P(v,u)$ are convex and therefore gated sets
\cite{Mu}. The convex sets $P(u,v)$ and $P(v,u)$ are isomorphic via
the matching induced by the edges from the cutset $\Theta(uv),$ and
the zone $Z(uv)$ is isomorphic to $K_2\Box P(u,v)$ \cite{Mu}.
Therefore from Lemma \ref{median} we immediately obtain the
following observation.

\begin{lemma} \label{strip} \cite{SoZaPr} For every edge $uv$ of a
finite squaregraph $G,$ the
zone $Z(uv)$ is a ladder comprising the edges from the cutset
$\Theta(uv)$ and the convex paths $P(u,v)$ and $P(v,u).$
\end{lemma}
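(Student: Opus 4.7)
The plan is to take as given the structural facts stated in the paragraph immediately before the lemma: namely, that $P(u,v)$ and $P(v,u)$ are convex (hence gated, and in particular connected) subgraphs of $G$, that the cutset $\Theta(uv)$ matches them isomorphically, and that $Z(uv) \cong K_2 \Box P(u,v)$ by the cited result of Mulder. A ladder is by definition a graph of the form $K_2 \Box P$ for some path $P$, so the entire content of the lemma reduces to proving that the convex subgraph $P(u,v)$ is a path (finite, since $G$ is finite). I would establish this using the two forbidden-subgraph conditions furnished by Lemma~\ref{median}.

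First I would bound the degrees of $P(u,v)$ by $2$. Suppose, for a contradiction, some vertex $p$ of $P(u,v)$ had three distinct neighbors $q_1, q_2, q_3$ inside $P(u,v)$. Then the subgraph of $P(u,v)$ induced by $\{p, q_1, q_2, q_3\}$ contains $K_{1,3}$, and taking the Cartesian product with the "rung" $K_2$ of the zone embeds $K_2 \Box K_{1,3}$ as a subgraph of $Z(uv) \subseteq G$, contradicting Lemma~\ref{median}. Hence every vertex of $P(u,v)$ has degree at most $2$ in $P(u,v)$, and since $P(u,v)$ is connected it must be either a path or a single cycle.

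Next I would rule out the cycle case. If $P(u,v)$ were an even cycle $C_{2k}$ (recall $G$ is bipartite), then $Z(uv) \cong K_2 \Box C_{2k}$ would sit inside $G$. For $k = 2$ this gives the $3$-cube $Q_3$, contradicting cube-freeness from Lemma~\ref{median}. For $k \ge 3$ the convex subgraph $P(u,v) \cong C_{2k}$ would itself have to be a median graph, yet two antipodal vertices of $C_{2k}$ admit two distinct shortest paths and hence have no well-defined median with any third vertex taken from the middle of one of them. Either way we reach a contradiction, so $P(u,v)$ is a (finite) path, and the lemma follows from the isomorphism $Z(uv)\cong K_2\Box P(u,v)$.

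I expect no serious obstacle here: the only point requiring care is the correct reduction of each bad case to one of the two forbidden configurations of Lemma~\ref{median}, and both reductions are immediate from the product decomposition $Z(uv) \cong K_2 \Box P(u,v)$ already quoted from Mulder. Everything else is standard median-graph bookkeeping.
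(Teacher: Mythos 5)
Your proposal is essentially the paper's own argument: the paper presents Lemma~\ref{strip} as an immediate consequence of the quoted facts (convexity of $P(u,v)$ and the isomorphism $Z(uv)\cong K_2\Box P(u,v)$) together with the forbidden subgraphs of Lemma~\ref{median}, and your write-up merely supplies the routine verification that a connected convex subgraph of maximum degree two, which can be neither a $4$-cycle (else the zone is a cube) nor a longer even cycle, must be a path. One small slip in the cycle case: your witness for $C_{2k}$ ($k\ge 3$) failing to be median is not correct --- for antipodal vertices $u,v$ and the midpoint $w$ of one arc the intersection $I(u,v)\cap I(u,w)\cap I(w,v)$ equals $\{w\}$, so that triplet does have a unique median; instead take three roughly equally spaced vertices such as $0,2,4$ in $C_6$, whose pairwise intervals have empty common intersection (or simply invoke the standard fact that every convex cycle in a median graph is a $4$-cycle).
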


\bigskip\noindent
{\bf Proof of Proposition~\ref{prop:unique_cyclic_order}:}
From Lemma~\ref{strip}, we know that the convex splits of $G$ restrict
to circular splits on $C$ when $G$ is 2-connected. To establish this implication for all
squaregraphs, we proceed by induction on the number of 2-connected
components of $G.$ Suppose that $G$ is obtained as an amalgamation
of two squaregraphs $G'$ and $G''$ along a common boundary vertex
$x.$ By induction assumption, the graphs $G'$ and $G''$ admit
circular representations $C'$ and $C''$. Let $x'$ and $x''$ be the
copies of $x$ in $C'$ and $C''$, respectively. Let $y'$ be a
neighbor of $x'$ in $C'.$  To obtain a circular representation $C$
of $G,$ we identify the two copies of $x$ in $C'$ and $C'',$ take
the circular representation $C'$ and place the remaining vertices of
$C''$ next to $x'$ but before $y'$ following the circular order of
$C''.$  To see that the traces of convex splits of $G$ on $C$ indeed
form a circular split system notice that any convex split
$\sigma(uv)$ of $G$ can be derived from the convex splits of $G'$
and $G''$ in the following way. Suppose without loss of generality
that $uv$ belongs to $G'$ and denote by $W'(u,v)$ and $W'(v,u)$ the
halfspaces defined by the edge $uv$ in $G'.$ If $x\in W'(v,u),$ then
$W(u,v)=W'(u,v)$ and $W(v,u)=W'(v,u)\cup V''$ where $V''$ denotes the vertex set of $G''$. If $x\in W'(u,v),$
then $W(u,v)=W'(u,v)\cup V''$ and $W(v,u)=W'(v,u).$ From the
construction of $C$ we infer that  the traces of $W(u,v)$ and
$W(v,u)$ on $C$ define indeed a circular split, establishing the claim in the proposition.

In the argument above, reversing $C'$ before amalgamating it with $C''$, or choosing the neighbor $y'$ of $x'$ differently, produces another cyclic order except in the cases enumerated in the proposition. It remains to prove uniqueness of the cyclic order for 2-connected squaregraphs.

In a finite 2-connected squaregraph $G$, the outer face $C$ is a simple cycle of length $2k\ge 4$. Every pair of consecutive
boundary vertices are connected by an edge separated by some split. Thus, each of the $k$ convex splits of $G$ separates some two pairs of
boundary vertices that are not distinguished by any others of the splits. Suppose that $C^*$ is a cycle (not necessarily a subgraph of $G$)
having the same vertices as $C$ but different from $C$ such that the split system of $C$ also yields circular splits on $C^*$. Now, if this
system harbored two different splits separating the same edge of $C^*$, then by the pigeon hole principle at least one edge $uv$ of
$C^*$ would not get separated by a convex split of $G$, contradicting separation of distinct points in $G$. Hence every edge of
$C^*$ is cut by exactly one split. Take an edge $ab$ of $C^*$ that is not an edge of $C$, and let $\{ A,B\}$ be the unique split of the system
separating $a$ and $b$. Since $\{ A,B\}$ is also a circular split of $C^*$, it separates a second edge, $a'b'$, of $C^*$. We may assume that
$a,a' \in A$ and $b,b' \in B$. If $a$ and $b$ were not adjacent in $G$, then we would have at least two convex splits separating $a$ and $b$,
in conflict with the uniqueness property just shown. Therefore $ab$ and, for the same reason, $a'b'$ are edges of $G$. It follows that $C^*$ is
a subgraph of $G.$

Since $ab$ is a chord of $C,$ it is a cut edge of $G$ (by virtue of Lemma~\ref{4corners}) and thus $G$ is a (gated) amalgam \cite{BaCh_survey} of 2-connected squaregraphs $G_1$
and $G_2$ along $\{ a,b\}$. Then $a'b'$ is an edge of $G_1$,
say. The convex split $\{ A,B\}$ of $G$ separating $a$ and $b$ (as well as $a'$ and $b'$) separates exactly one edge $uv$ on $C$ that is on the outer
face of $G_2$ where $u \in A$ and $v \in B$. Let $xy$ be the edge (not necessarily distinct from $ab$) opposite to $uv$ on the 4-cycle of
$G_2$ containing $uv$. Then the convex split of $G$ separating $uv$ from $xy$ also separates $uv$ from $G_1$. Since $u$ is between $a$
and $a'$ and $v$ is between $b$ and $b'$ on $C^*$, this convex split cannot restrict to a circular split on $C^*$, a contradiction.
This establishes Proposition~\ref{prop:unique_cyclic_order}.

\medskip
The second part of the preceding proof can be shortcut by observing that the subgraph $H$ induced by the boundary cycle $C$ in the
squaregraph $G$ is outerplanar.
It is well known that a 2-connected outerplanar graph has a unique Hamiltonian cycle; see e.g. Lemmas 5 and 6 of \cite{LeSt}.
Then $C$ and $C^*$, both
constituting Hamiltonian cycles of $H$ by the first part of the proof, must coincide, contrary to the assumption.
Note that, conversely, it follows (by induction) from the second part of the above proof that every bipartite,
2-connected, finite outerplanar graph can occur as the subgraph $H$ induced by the boundary of a
2-connected finite squaregraph. Obviously, the subgraph $H$ does not in general determine the squaregraph uniquely.

\section{Proofs for Section~\ref{sec:smallgen}}
\label{proofs:smallgen}

In finite cubefree median graphs -- and squaregraphs, in particular -- there is an 	ample supply of vertices of degrees at most two:

\begin{lemma}
\label{lem:far-deg2}
Let $G$ be a finite cube-free median graph, $v$ be any vertex of $G$, and $w$ be any
vertex that is maximally distant from $v$. Then $w$ has at most two
incident edges. If $G$ is a squaregraph, then $w$ and its neighbors all lie on the exterior face of $G$.
\end{lemma}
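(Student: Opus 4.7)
The plan is to handle the two assertions separately: the degree bound comes from the standard cube-forming construction for median graphs, and the boundary claim follows by direct inspection of the planar embedding.

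For the first assertion, bipartiteness of median graphs together with the maximal choice of $w$ force every neighbor of $w$ to lie at distance $d(v,w)-1$ from $v$. I would argue by contradiction: suppose $w$ has three distinct neighbors $x_1,x_2,x_3$, each at distance $d(v,w)-1$ from $v$. For every pair $i\neq j$, the median $y_{ij}:=m(v,x_i,x_j)$ lies on shortest paths from $v$ to $x_i$ and from $v$ to $x_j$, so $d(v,y_{ij})=d(v,w)-2$; since the bipartite distance $d(x_i,x_j)$ equals $2$, the vertex $y_{ij}$ is a common neighbor of $x_i$ and $x_j$ distinct from $w$. Next I would introduce $z:=m(y_{12},y_{13},y_{23})$ and check that $z$ is adjacent to each $y_{ij}$ (the parity of distances from $v$ rules out $z=y_{ij}$). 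The eight vertices $w,x_1,x_2,x_3,y_{12},y_{13},y_{23},z$ together with the twelve edges supplied by the median operation then span an induced $3$-cube, contradicting cube-freeness. Hence $w$ has at most two incident edges.

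For the second assertion, assume $G$ is a squaregraph. Because every inner vertex has degree at least four while $w$ has at most two incident edges, the vertex $w$ cannot be inner and thus belongs to the outer face. In the plane embedding, the boundary walk of the outer face enters and leaves $w$ through its (at most two) incident edges, so each such edge is traced by this walk; its other endpoint---that is, each neighbor of $w$---therefore lies on the outer face as well.

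I expect the only substantive step to be the cube construction in the first paragraph. The bipartiteness and quadrangle-condition ingredients it relies on are standard consequences of $G$ being a median graph and have already been used in Lemma~\ref{median}, so the argument reduces to verifying that the eight vertices and twelve edges listed above really do assemble into a $3$-cube, after which cube-freeness delivers the contradiction.
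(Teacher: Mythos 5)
Your argument is correct and is essentially the paper's own proof: the same three medians $m(v,x_i,x_j)$ together with their median $z$ assemble into a forbidden $3$-cube, and the boundary claim follows from the degree requirement on inner vertices (your extra remark that the neighbors of $w$ also lie on the outer face is a welcome explicit touch). The one deferred verification worth spelling out is that $y_{12},y_{13},y_{23}$ are pairwise distinct: a coincidence $y_{ij}=y_{ik}$ would give a vertex adjacent to all of $x_1,x_2,x_3$ and hence a second median of that triplet besides $w$, which is impossible.
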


\begin{proof}
Suppose to the contrary that there are three incident edges
$aw$, $bw$, and $cw$. Then, as $a$ and $b$ are at distance two apart and both are closer to $v$ than is $w$, the median $x=m(a,b,v)$ is adjacent to both $a$ and $b$ and distinct from $w$. A symmetric argument applies to the two other medians $y=m(a,c,v)$ and $z=m(b,c,v)$. However, no two of these medians can equal each other, for if two of them were equal they would be adjacent to all three of $a$, $b$, and $c$, contradicting the fact that this triplet has $w$ as its unique median. Similarly, as $x$, $y$, and $z$ are at distance two from each other, their median $m(x,y,z)$ must be adjacent to all three. But then these eight vertices $w$, $a$, $b$, $c$, $x$, $y$, $z$, and $m(x,y,z)$ would induce a cube, contradicting the assumption that the graph is cube-free. This contradiction shows that $w$ can have at most two incident edges. In a squaregraph inner vertices must have higher degree, so $w$ must be exterior.
\end{proof}

\bigskip\noindent
{\bf Proof of Proposition~\ref{prop:median_hull}:} We begin with the first assertion of the proposition, that every finite squaregraph $G$ has at least $\min\{4,\#E\}$ vertices of degree one or two. If $G$ is a path, it has $\#E+1$ vertices, all of which have degree one or two. If $G$ is a tree with exactly three vertices of degree one, then it has $\#E$ vertices of degrees one or two. For trees with more than three vertices of degree one, the claim is trivially satisfied from the bound 4.  And if it is not a tree, it has a nontrivial block $B$, which by Lemma~\ref{4corners} has at least four degree-two vertices. Each of these vertices $v$ is either of degree two in $G$ as a whole, or is an articulation point connecting $B$ to other blocks of $G.$ In the latter case, select a neighbor $w$ of $v$ outside $B$ and take a maximal interval $I(v,z)$ containing $w.$ By Lemma \ref{lem:far-deg2}, the vertex $z$, which belongs to some block $B_v$ different from $B,$ has degree at most two.
Thus, each of the degree-two vertices of $B$ is associated with a distinct vertex with degree one or two in $G$, so in this case there are at least four vertices in $G$ with degree one or two.

The next assertion of the proposition is that the vertices of degree at most three in a finite squaregraph $G$ are the articulation points of degree two or three and the endpoints of maximal convex paths.
Suppose by way of contradiction that some maximal convex
path $P$ in $G$ has an endpoint  $v$ of degree at  least 4. Let
$v'$ be the neighbor of $v$ in $P.$ Since $deg(v)\ge 4,$ $v'$ and
one of the neighbors $w$ of $v$ do not belong to a common inner face
of $G.$  Adjoining $w$ to the path $P$ we will obtain a locally
convex path (i.e., a path for which each 2-subpath is convex), which
is necessarily convex \cite{BaCh_wmg}, yielding a contradiction with
the maximality of $P.$  Therefore all endpoints of maximal convex
paths have degrees at most 3 and hence lie on the boundary~$C.$
Conversely, a vertex $u$ of degree 2 belongs to $C$ and has two
neighbors $v$ and $w$ on $C.$ If $u$ is not an articulation point, then $u$, $v$, and $w$ lie on a 4-cycle, whence $u$ is an endpoint of any maximal convex path $P$ extending the edge $uv$. Now,
let $u$ be a vertex of degree 3, necessarily belonging to $C.$ If all three edges $uv$, $uw$, $ux$ emanating from $u$ are on the boundary $C$, then at most one 4-cycle can contain two of the these edges, whence $u$ is an articulation point. Else $ux$, say, belongs to two inner faces, one containing $uv$ and the other $uw$; then any maximal convex path extending $ux$ must end at $u$. This completes the proof of the second
assertion of the proposition.

To conclude the proof, we must show that, given any vertex $v$ in $G$, it is possible to find three vertices of $X$ having $v$ as their median. If $v$ has degree one or two, it must be a member of $X$, and is the median of itself repeated three times, so we need only consider vertices $v$ with degree three or more.

First assume that $v$ is a vertex of degree at least three on the boundary of $G.$ Then one can extend a convex path along the boundary of $G$ in both directions from $v$ (continuing through 	 articulation points if necessary) to a path eventually reaching end vertices $p$ and $q$ of degree one or two, by virtue of Lemma  \ref{lem:far-deg2}. If $v$ is an articulation point, then one can traverse a convex path along the boundary in a third direction until another vertex $r$ of degree one or two is reached, so that $v$ is the median of $p,q,$ and $r.$ Hence we may assume that $v$ belongs to some 2-connected block $B$ and is not an articulation point of $G.$ Let $u$ and $w$ be the two neighbors of $v$ on the boundary of $B,$ where, say, $u$ 	is in $I(v,p)$ and $w$ in $I(v,q).$ Consider the halfspaces $H_1$ and $H_2$ containing $v$ but not $u$ or $w,$ 	respectively. If $H_1\cap H_2$ is not included in $B$ (because it harbors some articulation point of $G$), then 	choose a vertex $r$ from $(H_1\cap H_2)\setminus B$ at maximal distance to $v.$ Then $r$ has degree at most two (by virtue 	of Lemma \ref{lem:far-deg2}) such that $v$ is the median of $p,q,$ and $r.$ Consequently, $H_1\cap H_2$ is a convex subset of $B.$ If 	$H_1\cap H_2$ is a path, which is necessarily a maximal convex path with $v$ as one end vertex, then it meets $X$ in some vertex $x$ by the hypothesis, whence $v$ is the median of $p, q,$ and $x.$ Therefore we may 	finally assume that $H_1\cap H_2$ is contained in $B$ but is not a path. Pick a vertex $t$ in $H_1\cap H_2$ that has 	 neighbors in both $V\setminus H_1$ and $V\setminus H_2$ such that the distance of $t$ to $v$ is as large as possible. Then every vertex in $I(v,t)$ is adjacent to its gates in $V\setminus H_1$ and $V\setminus H_2.$ Now choose a vertex $r$ from $H_1\cap H_2$ 	such that $I(v,r)$ includes $I(v,t)$ where the distance to $v$ is as large as possible. Then $r$ has degree at 	most two within $H_1\cap H_2$ by Lemma \ref{lem:far-deg2} and can have at most one neighbor in $B$ outside $H_1\cap H_2.$ If $r$ is 	adjacent to its gate in $V\setminus H_1$, say, then $I(v,r)$ is a path, whence the degree of $r$ in $B$ (as well as in $G$) is 	two. Else, $r$ has the same degree two in $G$ as in $H_1\cap H_2.$ Hence $v$ is the median of $p, q,$ and $r$ in this 	case.

\begin{figure}[t]
\centering\includegraphics[width=2.5in]{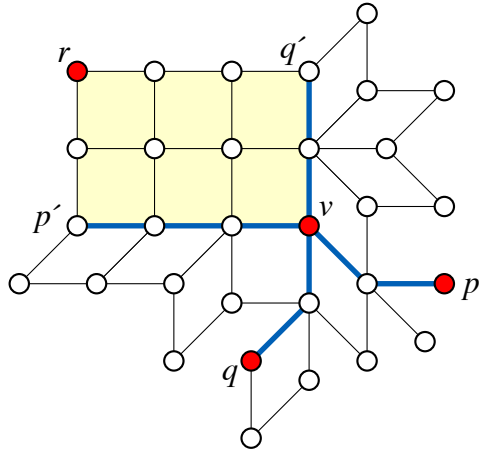}
\caption{Figure for the proof of Proposition~\ref{prop:median_hull}. The two convex paths $P'$ and $Q'$, portions of maximal convex paths $P$ and $Q$ containing points $p$ and $q$ in $X$, partition the squaregraph into two parts, one of which (shaded) contains neither $p$ nor $q$. Letting $r$ be any vertex of degree one or two within the shaded part forms a triplet $(p,q,r)$ of points in $X$ whose median is $v$.}
\label{fig:median-hull}
\end{figure}

Finally, suppose that $v$ is an inner vertex of $G$ and every maximal convex path through $v$ contains a vertex of $X$ only on one side of $v$. Let $p$ be a vertex of $X$ that belongs to one maximal convex path $P$ through $v$. There are two inner faces of $G$ that contain $v$ and are bounded by an edge of path $I(v,p)$; let $vy$ and $vy'$ be the two edges of $G$ that contain $v$ and belong to these two faces, but do not belong to the path. Let path $Q$ be a maximal convex extension of $yvy'$, and let $q$ be a vertex of $X$ belonging to $Q$, as shown in Figure~\ref{fig:median-hull}. $P$ and $Q$ intersect only at~$v$. Partition each of $P$ and $Q$ into two paths at $v$, and let $P'$ and $Q'$ be the subpaths that do not contain $p$ and $q$ respectively. Let $B$ be the block of $G$ containing $v$. Then $P'$ and $Q'$ must end at degree-three boundary vertices of $B$; for, these paths cannot contain any vertex of $X$ by assumption, and if instead one of these paths passed through an articulation point of $G$, one could (as in the proof of the first assertion of this proposition) find an associated vertex of degree one or two in another block of $G$ such that the continuation of the path from the articulation point to the associated vertex remains convex, contradicting the assumption that $v$ does not lie on any convex path between two vertices of $X$. Therefore, the union of the two paths $P'$ and $Q'$ forms a path that partitions the
squaregraph $G$ into two subgraphs that are themselves squaregraphs; let $G'$ be the one of these two subgraphs that does not contain $p$ nor $q$. By the first assertion of the proposition, $G'$ has at least four vertices of degree one or two. Three of these vertices may be $v$ and the endpoints of $P'$ and $Q'$, which do not belong to $X$, but the fourth vertex, $r$, must have degree one or two in $G$ as well and therefore must belong to $X$. As we now argue, the median of $p$, $q$, and $r$ is $v$. First, note that 	the concatenation of two convex paths is always a shortest path because their common endpoint 	serves as the mutual gate for the two convex paths.  Thus, $v$ lies on a shortest path from $p$ to $q$. Consider the gates of the vertex $r$ in $P$ and $Q.$ If these gates
belongs to the subpaths $P'$ and $Q',$ respectively, then necessarily $v$ belongs to the intervals $I(r,p)$ and $I(r,q),$ and we are done. Otherwise, if the gate of $r$ in $P$ belongs to $P\setminus P',$ then the shortest path from $r$ to the gate has to pass through $Q'$ because  $P\cup Q$ is a separator of $G.$ But then $v$ would have to be the gate, giving a contradiction.

%$R$ be any shortest path from $r$ to $v$; by convexity, $R$ lies entirely within $G'$. Because it is a shortest path, $R$ cannot itself cross any convex split of $G$ twice, and neither can the concatenation of $R$ with the convex path from $v$ to $p$ cross such a split twice. For, if it did, the zone described by Lemma~\ref{strip} for the twice-crossed split would have to cross both the path from $v$ to $p$ and the convex path $Q$ that separates $p$ from $G'$, and this zone together with the two paths $P$ and $Q$ would again surround a region with only three degree-two boundary vertices, giving a contradiction. Thus, $v$ lies on a shortest path from $p$ to $r$, and by a symmetric argument it lies on a shortest path from $q$ to $r$. Therefore, it is the median of $p$, $q$, and $r$.

\bigskip\noindent
{\bf Proof of Proposition~\ref{prop:median_hull_bis}:} The first assertion of the proposition follows in one direction from the definition 	of inner lines. As for the converse, if the intersection of two halfspaces is a path lying entirely in some 2-connected block, then this intersection equals the intersection of the zones corresponding to the respective splits and thus constitutes an inner line.

The vertices of degrees one and two in $G$ must belong to every median-generating subset $X$ of $G.$ If such a set $X$ was disjoint from the intersection of two distinct halfspaces $H_1$ and $H_2$ of $G$ that constitutes a maximal convex path $P$ of some block, then $V\setminus P=(V\setminus H_1)\cup (V\setminus H_2)$ is closed under taking medians, whence $X$ cannot median-generate $G.$

Conversely, assume that $X$ meets the criterion. Let $v$ be a boundary vertex of degree at least three in $G.$ Then as in the proof of Proposition \ref{prop:median_hull}, $v$ can be obtained as the median of three vertices from $X.$ Thus, the boundary of $B$ is obtained in one step of median generation from the vertices of degrees at most two. By Proposition \ref{prop:median_hull}, every vertex of $B$ is the median of three vertices from the boundary of $B.$ We conclude that two steps of median generation starting from $X$ suffice to retrieve the entire squaregraph $G.$

\bigskip\noindent
{\bf Proof of Theorem~\ref{theorem:npc-gen}:}
The problem belongs to NP, since we may test any potential $g$-element set $X$ by
generating medians of triplets of elements from $X$ and from the previously-generated elements until finding a subset of $G$ closed under median operations; $X$ is median-generating if this subset consists of all vertices in $G$. To prove NP-hardness, we describe a polynomial time many-one reduction from a known NP-complete problem, edge clique cover on graphs with polynomially many cliques. To be more specific, in the problem we reduce from, we are given a connected undirected graph $F$ with $n$ vertices which is guaranteed not to have any complete subgraphs on five or more vertices, and a number $k$, and we must determine whether $F$ contains a set of $k$ cliques (simplices or complete subgraphs) that together cover all of its edges. Rosgen and Stewart~\cite{RosSte-DMTCS-07} provide a reduction showing that this special case of the clique cover problem is NP-complete.

There is a canonical construction of median graphs generated from arbitrary
graphs \cite{BaVdV}: namely, for a graph $F$ the simplex graph $\kappa(F)$ has the simplices of $F$ as its vertices and pairs of (comparable) simplices differing by the presence or absence of exactly one vertex as its edges.
Given the input graph $F$ with $n$ vertices and $m$ edges, and given the number $k$, we form a graph $F'$ by adding to $F$ a new degree-one vertex adjacent to each vertex of $F$, let $G=\kappa(F')$, and let $g=n+k$. This is a polynomial time transformation: due to the requirement that $F$ have no $K_5$ subgraph, the number of vertices in $\kappa(F')$ is $O(n^4)$. We claim that $G$ has a $g$-element median-generating set $X$ if and only if $F$ can be covered by $k$ cliques.

In one direction, from a clique cover of $F$, we form a median-generating set $X$ by including one element for each of the cliques in the cover, and one element for each of the $n$ edges added to $F$ to form $F'$. Then any edge $e$ of $F$ can be generated as the median of three simplices: the simplex that covers $e$ and the two edges connecting the endpoints of $e$ to the adjacent degree-one vertices of $F'$. Any triangle of $F$ can be generated as the median of its three edges, and similarly any larger simplex can be built up as medians of smaller subsimplices. The empty simplex can be generated as the median of three disjoint edges connecting to degree-one vertices of $F'$. And any single-vertex simplex can be generated as the median of two edges that contain it and the empty simplex. Thus, $X$ is a median-generating set of the required size.

In the other direction, suppose $X$ is a median-generating set of size at most $n+k$. No edge of $G$ can be generated as the median of zero- and one-element simplices, so each edge of $F$ must be a subset of one of the simplices in $X$. But the only simplices in $F'$ that contain one of the edges incident to the newly-added degree-one vertices are those edges themselves, so each of these $n$ edges must form one of the elements of $X$. The remaining $k$ elements of $X$ must cover the remaining edges of $F$, so they form a clique cover of the required size.

This polynomial-time many-one transformation completes the desired NP-completeness proof.

\bigskip\noindent
{\bf Proof of Proposition~\ref{prop:injective-hull}:}
First observe that for any minimal parity-integer metric form $f$ on an absolute retract $G$ of bipartite graphs there exists some vertex $u$ of $G$ with $f = d_u$. Indeed, one can extend $G$ to a bipartite graph $H$ by adjoining a new vertex $z$ and connecting $z$ to each vertex $v$ of $G$ by a new path of length $f(v)$. Since the image $u$ of $z$ under a retraction from $H$ to $G$ satisfies $d_u(v) = d_G(u,v) \le d_H(z,v) = f(v)$ for all vertices $v$ of $G$, the equality $d_u = f$ holds by minimality of $f$. In particular, this holds for every finite cube-free median graph $G$ because these graphs are known to be absolute retracts of bipartite graphs (Corollary 4.5 of~\cite{BaDr86}).

Finite cube-free median graphs have an ample supply of vertices of degree less than three: expand any isometric path between two distinct vertices $u$ and $v$ to a maximal isometric path from $x$ via $u$ and $v$ to $y$. Then by Lemma~\ref{lem:far-deg2} either endpoint can have at most two neighbors in the interval $I(x,y)$ and no neighbors beyond because of maximality. Therefore the particular parity-integer metric forms $d_u$ and $d_v$ differ on the set $X$ of vertices with degrees one or two, namely $d_u(x) < d_v(x)$ and $d_u(y) > d_v(y)$, whence $||d_u - d_v||_\infty \ge d(u,v)$. The reverse inequality follows from the fact that $||d_t - d_w||_\infty \ge 1$ for every edge $tw$ of $G$. Thus, the squaregraph $G$ is isomorphic to the absolute retract of bipartite graphs generated by the metric space $(X,d|_X)$ embedded in $(\mathbb{N}^X,||\cdot||_\infty)$.

\section{Proofs for Section~\ref{sec:local-x}}
\label{proofs:local-x}

To characterize squaregraphs without reference to the plane
embedding, we use the following notation. The  {\it rim}
$R(u)$ at a vertex $u$ consists of the neighborhood $N(u)$ of
$u$ and all vertices other than $u$ adjacent to at least two vertices of $N(u).$
The {\it closed rim} $R[u]$ is the rim $R(u)$ together with its hub $u.$
A median graph is \emph{cube-free,} that is, it does not have an
induced (or isometric) 3-cube $Q_3$ exactly when it does not include
a 3-cogwheel, that is, each $R[u]$ is a median subgraph. The
vertices of a squaregraph are then classified into inner and
boundary vertices: a vertex $u$ is an inner vertex exactly when
$R[u]$ is a cogwheel. The vertex $u$ is on the boundary if and only
if either $R[u]$ is a bipartite fan, alias {\it cogfan} 	(that is,
the rim $R(u)$ induces a path) or $u$ is an articulation point of $R[u]$ (in which
case the squaregraph is not 2-connected).

\begin{lemma} \label{2connected} The following statements are equivalent for a finite
median graph $G:$

\begin{itemize}
\item[(i)] $G$ has a plane embedding as a 2-connected squaregraph;
\item[(ii)] all closed rims in $G$ are cogfans or cogwheels;
\item[(iii)] $G$ is 2-connected and cube-free such that
$K_2\Box K_{1,3}$ is not an induced (or isometric) subgraph.
\end{itemize}
\end{lemma}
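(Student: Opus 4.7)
The plan is to prove the cyclic chain (i)$\Rightarrow$(iii)$\Rightarrow$(ii)$\Rightarrow$(i), since (i)$\Rightarrow$(iii) is essentially free from Lemma~\ref{median}, while the remaining two implications are the interesting ones.

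For (i)$\Rightarrow$(iii), Lemma~\ref{median} already tells us that every finite squaregraph is a cube-free and $K_2\Box K_{1,3}$-free median graph, and 2-connectedness is part of the hypothesis of (i). So this direction is immediate.

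For (iii)$\Rightarrow$(ii), fix a vertex $u\in V(G)$ and study the structure of $R[u]$. Since $G$ is a median graph, any two distinct neighbors $v,w\in N(u)$ either are at distance 2 with a unique common neighbor $m(v,u,w)\ne u$, giving a 4-cycle $uvmw$, or have no common neighbor other than $u$. Define an auxiliary graph $\Gamma_u$ on $N(u)$ in which $v\sim w$ iff $\{v,w\}$ spans a 4-cycle with $u$. I would first prove two claims. \emph{Claim~1:} $\Gamma_u$ has maximum degree $\le 2$; for if some $v\in N(u)$ has three $\Gamma_u$-neighbors $v_1,v_2,v_3$ with corresponding 4-cycles $uvx_iv_i$, then after checking via bipartiteness and cube-freeness that no unintended edges occur among $\{u,v,v_1,v_2,v_3,x_1,x_2,x_3\}$, this set induces $K_2\Box K_{1,3}$ (with centers $u$ and $v$, leaves $v_1,v_2,v_3$ opposite to $x_1,x_2,x_3$), contradicting (iii). \emph{Claim~2:} $\Gamma_u$ is triangle-free; for if $v_1v_2v_3$ were a triangle in $\Gamma_u$ with corresponding squares through $u$, the median of the three ``opposite'' vertices $x_{12},x_{13},x_{23}$ would close a 3-cube at $u$. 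Hence $\Gamma_u$ is a disjoint union of paths and cycles. I would then use 2-connectedness to show that $\Gamma_u$ is connected: any path in $G\setminus\{u\}$ between two neighbors of $u$ traverses alternating edges in and out of $N(u)$ and thus, together with a sliding argument based on the median property, stays within a single $\Gamma_u$-component. The resulting single path or cycle is exactly the rim $R(u)$, with the ``opposite'' vertices $x_{ij}$ inserted between consecutive neighbors of $u$; so $R[u]$ is a cogfan or cogwheel, respectively. (If $\Gamma_u$ were a $k$-cycle with $k=3$, this would again give a cube; hence cogwheels have $k\ge 4$, as required.)

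For (ii)$\Rightarrow$(i), I would first harvest the three properties in (iii) from (ii). Cube-freeness holds because a 3-cube would force a 3-cogwheel rim, which is neither a cogfan (a path) nor a cogwheel ($k\ge 4$); $K_2\Box K_{1,3}$-freeness holds because its central vertex has a rim that is neither a path nor a cycle; and 2-connectedness holds because at an articulation point $u$ the rim $R(u)$ would split into the parts lying in different blocks, contradicting connectivity of a cogfan or cogwheel. With these in hand, I would construct the plane embedding directly from the local cyclic (or linear) orderings of the 4-cycles around each vertex dictated by its cogwheel (or cogfan) rim. Specifically, for each inner vertex $u$ the cogwheel $R[u]$ gives a cyclic order of the incident edges, and for each rim-path vertex the cogfan gives a linear order; I would then check that the rotation system thus defined is consistent (any two adjacent vertices $u,v$ agree on the cyclic order of the at most two 4-cycles containing the edge $uv$, by Claim~1 applied to both endpoints) and has Euler characteristic $1$, with all bounded faces being 4-cycles, so that it defines a plane squaregraph embedding of $G$.

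The main obstacle I anticipate is the last step, (ii)$\Rightarrow$(i): promoting the local rotation data at every vertex to a global plane embedding. The combinatorial content (rotation systems, consistency across edges) is standard, but the Euler-characteristic count requires care, and one has to verify that every edge lies on at most two faces and that the unbounded face is indeed bounded by a simple cycle. Lemma~\ref{4corners} (which was proved from the squaregraph definition) is not available as a black box here, so the count of vertices of degree $\le 2$ that will force the unbounded face to be a cycle must instead be extracted from the cogfan structure at the boundary vertices.
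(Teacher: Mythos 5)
Your chain (i)$\Rightarrow$(iii)$\Rightarrow$(ii)$\Rightarrow$(i) inverts the paper's chain (i)$\Rightarrow$(ii)$\Rightarrow$(iii)$\Rightarrow$(i), and this changes where the real work sits. In the paper the only substantive implication is (iii)$\Rightarrow$(i), proved by induction on the number of vertices: choose a convex split $\sigma(uv)$ with $W(u,v)$ inclusion-minimal among all halfspaces; a short convexity argument shows that $W(u,v)$ coincides with $P(u,v)=Z(uv)\cap W(u,v)$, and cube- and $K_2\Box K_{1,3}$-freeness then force $P(u,v)$ to be a convex path whose neighbors outside form an isomorphic path lying on the boundary of $G\setminus P(u,v)$; the latter is realized as a squaregraph by induction, and attaching the peeled path back along that boundary path embeds $G$. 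This sidesteps both of the places where your plan is incomplete.

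There are two genuine gaps. First, in (iii)$\Rightarrow$(ii) your Claims 1 and 2 are sound (though for Claim 1 the edges $v_ix_j$ are excluded by the $K_{2,3}$-freeness of median graphs rather than by cube-freeness), but the connectivity of $\Gamma_u$ does not follow from 2-connectedness by a routine path argument: it is exactly the kind of statement that requires a minimal-disc or minimal-face-count argument of the sort used in the proof of Lemma~\ref{median}, and ``a sliding argument based on the median property'' is a placeholder rather than a proof. Second, and more seriously, under your decomposition the entire content of the lemma is concentrated in (ii)$\Rightarrow$(i), and the plan stops at naming the obstacles: a rotation system always yields an embedding in \emph{some} orientable surface, and certifying genus zero requires the face count you defer, which in turn needs the boundary-cycle and degree-two-vertex information that, as you correctly observe, is unavailable without Lemma~\ref{4corners}. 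Since you already propose to extract the three properties of (iii) from (ii) as the first step of that direction, the cleanest repair is to abandon the rotation-system construction and prove (iii)$\Rightarrow$(i) by the halfspace-peeling induction above; your (iii)$\Rightarrow$(ii) then becomes superfluous as well, because (i)$\Rightarrow$(ii) is immediate from the classification of rims of inner and boundary vertices in a plane-embedded 2-connected squaregraph.
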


\begin{proof} From the remark preceding this result we conclude that
(i)$\Rightarrow$(ii). If the closed rims of $G$ are cogfans or
cogwheels, then $G$ cannot contain articulation points, thus $G$
is 2-connected. Moreover, the cube $Q_3$ and $K_2\Box K_{1,3}$ are
forbidden as well, showing that (ii)$\Rightarrow$(iii).  To
establish that (iii)$\Rightarrow$(i), proceed by induction.   Select
a convex split $\sigma(uv)=\{ W(u,v),W(v,u)\}$ of $G$ such that
$W(u,v)$ is minimal by inclusion among all such halfspaces. We
assert that $W(u,v)$ coincides with $P(u,v)=Z(uv)\cap W(u,v)$, i.e., every vertex of
$W(u,v)$ contains a neighbor in $W(v,u)$. Suppose by way of
contradiction that there is a vertex $x\in W(u,v)\setminus P(u,v)$.
Since $W(u,v)$ is convex, we can select $x$ to be adjacent to a
vertex $y\in P(u,v)$. Consider the convex split $\sigma(xy)$. If
$W(x,y)\subset W(u,v)$, we obtain a contradiction with the
minimality choice of $W(u,v)$. Thus the splits $\sigma(uv)$ and
$\sigma(xy)$ are incompatible. This means that $W(x,y)$ shares a
vertex with $W(v,u)$. Then necessarily there is a vertex $z\in
W(x,y)\cap P(u,v)$. Since $y$ is adjacent to $x$ and belongs to
$P(u,v)$, we deduce that $y\in I(x,z)$. Since $y\in W(y,x)$, we
obtain a contradiction with the convexity of $W(x,y)$. Hence indeed
$W(u,v)=P(u,v)$. Since $G$ is $Q_3$- and $K_2\Box K_{1,3}$-free,
necessarily $P(u,v)$ constitutes a convex path. Its neighbors
outside form an isomorphic path such that each edge of the latter
lies on the boundary of the graph $G\setminus P(u,v)$ which can be realized as  a squaregraph
by virtue of the induction hypothesis. Then $G$ is a squaregraph as
well.
\end{proof}

\begin{lemma}\label{directed-union} The following statements are equivalent for an infinite median graph $G:$
\begin{itemize}
\item[(i)] $G$ is 2-connected and cube-free as well as $K_2\Box K_{1,3}$-free;
\item[(ii)] $G$ is the directed union of convex subgraphs that are 2-connected finite squaregraphs;
\item[(iii)] all closed rims in $G$ are cogfans or cogwheels.
\end{itemize}
\end{lemma}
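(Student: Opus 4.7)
The plan is to prove the cyclic implications (ii)$\Rightarrow$(i)$\Rightarrow$(iii)$\Rightarrow$(ii), in each step reducing to the finite case already settled in Lemma \ref{2connected}. The easy direction (ii)$\Rightarrow$(i) is that cube-freeness and $K_2\Box K_{1,3}$-freeness are forbidden-finite-induced-subgraph conditions and hence pass to any directed union, while 2-connectivity of $G$ follows because any two vertices of $G$ already lie together in some 2-connected member of the family, which supplies two internally vertex-disjoint paths between them.

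For (i)$\Rightarrow$(iii), I would fix a vertex $u$ and analyse its closed rim $R[u]$. Cube-freeness makes $R[u]$ a median subgraph, which in turn inherits cube-freeness and $K_2\Box K_{1,3}$-freeness from $G$. A first step is to verify that $R[u]$ is finite: an infinite neighbourhood of $u$ would, via the quadrangle condition for bipartite median graphs together with the exclusion of $Q_3$, $K_{2,3}$, and $K_2\Box K_{1,3}$, force a forbidden configuration among the second-neighbours of $u$. A second step is to observe that 2-connectivity of $G$ prevents $u$ from being an articulation point of $R[u]$, because convexity of halfspaces would transport any $R[u]$-separation at $u$ into a $G$-separation at $u$. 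Thus $R[u]$ is a finite, 2-connected, cube-free, $K_2\Box K_{1,3}$-free median graph, and Lemma \ref{2connected} realises it as a 2-connected plane squaregraph; since its structure is pinned down by the single central vertex $u$ and its rim $R(u)$, the only possibilities are a cogfan (when $u$ lies on the outer face of $R[u]$) or a cogwheel (when $u$ is an inner vertex of $R[u]$).

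The substantive direction is (iii)$\Rightarrow$(ii). Since cogfans and cogwheels are finite and have a non-articulation centre, (iii) already forces $G$ to have finite vertex degrees and to be 2-connected. Given any finite set $S\subseteq V(G)$, I would form the convex hull $H_0=\operatorname{conv}(S)$; in any median graph the convex hull of a finite set is the intersection of the finitely many halfspaces separating some pair of points in $S$ and is therefore finite, and $H_0$ inherits cube-freeness and $K_2\Box K_{1,3}$-freeness as a convex subgraph. If $H_0$ is already 2-connected, then Lemma \ref{2connected} identifies it as a finite 2-connected squaregraph. Otherwise I would repair each bridge or articulation of $H_0$ by localising at the offending vertex, using the cogfan/cogwheel structure of $G$ at that vertex to exhibit a zone of $G$ that closes the separation, adjoining it, and retaking the convex hull (which remains finite). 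A suitable monovariant, for instance the number of edges of the current subgraph not lying in any 4-cycle, strictly decreases at each repair step, so the procedure terminates at a finite convex 2-connected squaregraph $H\supseteq S$. The family of all such $H$'s is then clearly directed and covers $G$, proving (ii).

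The principal obstacle lies in the termination of the repair procedure in (iii)$\Rightarrow$(ii): a naive patch can in principle introduce fresh 2-separations, so one must both select a monovariant that truly decreases and use the cogfan/cogwheel hypothesis at every vertex to guarantee that a suitable closing zone is available locally in $G$. A secondary technical point, occurring in (i)$\Rightarrow$(iii), is ruling out infinite-degree vertices without assuming local finiteness a priori; this is handled by combining the quadrangle condition of median graphs with the forbidden subgraphs $Q_3$ and $K_2\Box K_{1,3}$.
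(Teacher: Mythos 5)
There is a genuine gap, and it occurs at the point you yourself flag as a ``secondary technical point'': the claim that rims (equivalently, vertex degrees) can be shown to be finite. This claim is false. The lemma imposes no local finiteness on $G$, and the one-way infinite cogfan --- a centre $u$ joined to $v_0,v_1,v_2,\dots$, with $w_i$ adjacent to $v_i$ and $v_{i+1}$ for each $i$ --- is a $2$-connected, cube-free, $K_2\Box K_{1,3}$-free median graph satisfying all three conditions of the lemma in which $R[u]$ is infinite (its rim is a one-way infinite path, which is still a cogfan). Consequently your step ``verify that $R[u]$ is finite'' in (i)$\Rightarrow$(iii) cannot succeed, the reduction of that implication to the finite Lemma~\ref{2connected} collapses, and the assertion in (iii)$\Rightarrow$(ii) that cogfans and cogwheels are finite (hence that $G$ has finite vertex degrees) is likewise wrong. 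The paper's proof of (i)$\Rightarrow$(iii) avoids finiteness altogether by a purely local argument: if $R(u)$ is not a path or a cycle, some $x\in R(u)$ has three neighbours $y_1,y_2,y_3$ in $R(u)$, and then $\{u,x,y_1,y_2,y_3\}$ induces a forbidden $K_2\Box K_{1,3}$ when $x\sim u$, or a forbidden $K_{2,3}$ when $x\not\sim u$.

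The second gap is the heart of your (iii)$\Rightarrow$(ii): the ``repair procedure'' for non-$2$-connected convex hulls is not actually specified, no working monovariant is exhibited, and you concede that termination is the principal obstacle --- so this direction is not proved. The paper sidesteps the difficulty entirely by proving (i)$\Rightarrow$(ii) instead (together with the easy (iii)$\Rightarrow$(i) and (ii)$\Rightarrow$(i)) and by never letting $2$-connectivity break: starting from an inner face $H_1$, one attaches to a convex $2$-connected finite squaregraph $H_i$ a vertex $v$ adjacent to some $u\in H_i$ \emph{together with} a path $P$ from $v$ back to $H_i$ avoiding $u$ (such a path exists by $2$-connectivity of $G$); then $H_i\cup P$ is $2$-connected, its convex hull is finite because $G$ is median, $2$-connectivity is inherited by the convex hull, and Lemma~\ref{2connected} identifies $H_{i+1}$ as a finite $2$-connected squaregraph. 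You would do well to adopt this incremental construction: it replaces the unproven termination argument with a one-step invariant that is trivially maintained.
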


\begin{proof} The implication (iii)$\Rightarrow$(i) is obvious and the implication (ii)$\Rightarrow$(i) follows
from Lemma  \ref{2connected}. To show that (i)$\Rightarrow$(ii), we start with any inner face of $G$ as the
first 2-connected
convex subgraph $H_1.$ Let  $H_1\subset H_2\subset\cdots \subset H_i$ be a chain of
$i$ convex 2-connected  finite squaregraphs of $G$ and let $v$ be a vertex of $G$ outside $H_i$ which is
adjacent to a vertex $u$ of $H_i.$ Since $G$ is 2-connected, the vertex $v$ can be connected by
a path $P$ to some vertex $w\ne u$ of $H_i$ not passing via $u.$
The subgraph induced by $H_i\cup P$ is 2-connected, therefore the subgraph $H_{i+1}$ induced by its
convex hull is also 2-connected. On the other hand, since $H_i\cup P$ is finite and the graph $G$ is median,
this convex hull is also  finite. From Lemma  \ref{2connected} we infer that  $H_{i+1}$ is a 2-connected finite squaregraph.
This establishes that $G$ is the directed union of convex subgraphs that are 2-connected finite
squaregraphs.  It remains to show that (i)$\Rightarrow$(iii). Since $G$ is 2-connected, any rim $R(u)$ is a connected subgraph.
If $R(u)$ is not a cycle or a path, then some vertex $x\in R(u)$ has at least three neighbors $y_1,y_2,y_3$ in $R(u).$ If $x$ is adjacent to
$u,$ then the subgraph induced by $u,x,y_1,y_2,y_3$ is a forbidden $K_2\Box K_{1,3}.$ On the other hand, if $x$ is not adjacent to $u,$
then $y_1,y_2,$ and $y_3$ are adjacent to $u$ and we obtain a forbidden $K_{2,3}.$
\end{proof}

\begin{lemma}\label{amalgam} For a finite cube-free median graph $G$ the following
statements are equivalent:
\begin{itemize}
\item[(i)] $G$ is circular, viz. the traces of the convex splits of $G$
on the boundary $C$ form a circular split system;
\item[(ii)] $G$ does not contain $K_2\Box K_{1,3}$ as an induced (or isometric) subgraph and
does not contain an induced (or isometric) suspended cogwheel;
\item[(iii)] $G$ can
be obtained by successive amalgamation of 2-connected squaregraphs
along boundary vertices.
\end{itemize}
\end{lemma}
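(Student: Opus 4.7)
The plan is to establish the cyclic chain (ii) $\Rightarrow$ (iii) $\Rightarrow$ (i) $\Rightarrow$ (ii), using the block decomposition of $G$ and leveraging Lemma~\ref{2connected} to identify 2-connected squaregraph blocks and Proposition~\ref{prop:unique_cyclic_order} to propagate the circular order through amalgamations.

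For (ii) $\Rightarrow$ (iii), I would decompose $G$ into its blocks. Each 2-connected block $B$ is a cube-free median graph that is 2-connected and $K_2\Box K_{1,3}$-free, hence a 2-connected squaregraph by Lemma~\ref{2connected}, while trivial $K_2$ blocks represent degenerate amalgamation pieces. To show that every articulation point $c$ is a boundary vertex of each containing block, suppose instead that $c$ is inner in some block $B$, so $R_B[c]$ is a cogwheel. A neighbor $p$ of $c$ in another block has no further neighbor in $B$ by 2-connectivity of $B$, so $R_B[c] \cup \{p\}$ induces a suspended cogwheel, contradicting (ii). Hence $G$ is the iterated amalgamation of its blocks at boundary vertices.

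For (iii) $\Rightarrow$ (i), I would proceed by induction on the number of amalgamation steps. The base case, a single 2-connected squaregraph, is exactly Proposition~\ref{prop:unique_cyclic_order}. For the inductive step, write $G = G_1 \cup_x G_2$ with circular orders on $C(G_1)$ and $C(G_2)$; as in the amalgamation argument of Proposition~\ref{prop:unique_cyclic_order}, splicing $C(G_2)$ into $C(G_1)$ next to $x$ produces a cyclic order on $C(G)$ in which every convex split of $G$ still restricts to an arc, because every such split descends from a convex split of one $G_i$ by augmenting the side containing $x$ with the entire opposite factor.

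Finally, for (i) $\Rightarrow$ (ii), I would argue by contrapositive. If $G$ contains an induced $K_2\Box K_{1,3}$, its four $\Theta$-classes extend to four convex splits of $G$ whose traces on the eight subgraph vertices are one split separating the two copies of $K_{1,3}$ into 4-arcs plus three transversal splits each isolating a pair $\{(x,0),(x,1)\}$. A short case analysis shows that any cyclic order on the eight vertices making the first split circular has exactly two interface positions between the two 4-arcs, whereas the three transversal splits demand three such pairings; hence no cyclic order makes all four traces circular. Lifting this obstruction to the boundary $C$ --- each of the eight nonempty convex quadrants determined by these splits contains a boundary vertex of $G$ extracted via Lemma~\ref{lem:far-deg2} --- yields eight points of $C$ with the same non-circular pattern, contradicting (i). An induced suspended cogwheel is handled analogously: its $k+1$ spoke-plus-pendant splits force the length-$2k$ cogwheel cycle to occupy $2k$ consecutive cyclic positions, leaving nowhere to place the center and pendant without splitting one of the required 3-arcs, and the obstruction again lifts to $C$. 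The main obstacle I anticipate is the boundary-lift itself: one must produce in each quadrant an actual boundary vertex of $G$, not merely a vertex of low degree within the quadrant, which calls for a careful combination of Lemma~\ref{lem:far-deg2} with the observation that any cogwheel extension at such an extremal vertex would propagate the forbidden configuration into a region where it cannot live.
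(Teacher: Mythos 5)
Your implications (ii)$\Rightarrow$(iii) and (iii)$\Rightarrow$(i) coincide with the paper's proof: the same block decomposition plus Lemma~\ref{2connected}, the same observation that an articulation point lying in the interior of a block would create a suspended cogwheel, and the identical induction with the splice of $C''$ into $C'$ next to the amalgamation vertex (the base case being Lemma~\ref{strip}, which is also what underlies the first assertion of Proposition~\ref{prop:unique_cyclic_order}). Your terminal combinatorics for (i)$\Rightarrow$(ii) --- three straddling leaf-pairs versus only two interface positions between the two complementary $3$-arcs --- is also correct and is exactly the impossibility the paper invokes.

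The genuine gap is the ``boundary lift'' in (i)$\Rightarrow$(ii), which you correctly identify as the main obstacle but do not close, and your proposed route to it does not work. First, a preliminary point you gloss over: for the $\Theta$-classes of an \emph{induced} copy $H$ of $K_2\Box K_{1,3}$ (or of a suspended cogwheel) to extend to convex splits of $G$ with the traces you describe, you need $H$ to be isometric, indeed convex, in $G$; the paper secures this by observing that in a cube-free median graph every induced $4$-cycle, domino, and $K_2\Box K_{1,3}$ is locally convex and hence convex. Second, and more seriously, your claim that each of the eight occupied cells of the four-split arrangement contains a vertex of $C$ is false as stated: already for $G=K_2\Box K_{1,3}$ itself the two cells containing the hubs are singletons that no median-generating set need meet. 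Even restricting to the six leaf cells, each is a four-fold intersection of halfspaces, and neither the preservation of pairwise intersections on a median-generating set nor Lemma~\ref{lem:far-deg2} delivers a vertex of $C$ inside a \emph{prescribed} such cell: a vertex of degree at most two \emph{within} a convex cell need not have degree at most two in $G$, and a vertex maximally distant in $G$ from a chosen base point will not in general land in the cell you want. The paper's resolution is different and is the idea your argument is missing: since $H$ is convex it is gated, the gate map $p_H$ is a median homomorphism, hence it carries the median-generating set $C$ onto a median-generating set of $H$, which must contain every vertex of degree at most two in $H$; choosing one preimage in $C$ for each such vertex, the equivalence $x\in \widehat{W}\Leftrightarrow p_H(x)\in W$ places each preimage in exactly the right cell, and the cyclic order on these preimages then transfers to the six (or $2k+2$) distinguished vertices of $H$, where your counting argument applies. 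Without the gate-map step (or a substitute for it), the reduction to that final computation is not established.
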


\begin{proof} First observe that in a cube-free median graph any subgraph isomorphic to the
4-cycle, or the domino $K_2\Box K_{1,2}$, or $K_2\Box K_{1,3}$ is necessarily locally convex and hence
convex. Consequently, if the latter graph is forbidden as an induced subgraph, then every induced 	
cogwheel is (locally) convex and further every induced suspended cogwheel is convex.
Now suppose that $G$ is circular but had a convex subgraph $H$ isomorphic to $K_2\Box K_{1,3}$ or a
suspended cogwheel. Then the boundary $C$ of $G$ contains a subset $X$ that is bijectively 	
mapped by the gate map from $G$ to $H$ to the set $Y$ of vertices of degree at most two in $H.$
The total cyclic order restricted to $X$ would then enforce a total cyclic order on $Y$ so that
the traces on $Y$ of the halfspaces of $H$ are arcs, which however is impossible. Therefore (i)$\Rightarrow$(ii)
holds.

Conversely, if a cube-free median graph
$G$ does not contain the forbidden configurations, then from Lemma
\ref{2connected} we infer that every 2-connected component of $G$ is
a 2-connected squaregraph. Since $G$ does not contain isometric
suspended cogwheels, the 2-connected components of $G$ can only
intersect in their boundaries, thus (ii)$\Rightarrow$(iii).

From Lemma \ref{strip}, we know that (iii)$\Rightarrow$(i) holds for
2-connected squaregraphs. To establish this implication for all
squaregraphs, we proceed by induction on the number of 2-connected
components of $G.$ Suppose that $G$ is obtained as an amalgamation
of two squaregraphs $G'$ and $G''$ along a common boundary vertex
$x.$ By induction assumption, the graphs $G'$ and $G''$ admit
circular representations $C'$ and $C''$. Let $x'$ and $x''$ be the
copies of $x$ in $C'$ and $C''$, respectively. Let $y'$ be a
neighbor of $x'$ in $C'.$  To obtain a circular representation $C$
of $G,$ we identify the two copies of $x$ in $C'$ and $C'',$ take
the circular representation $C'$ and place the remaining vertices of
$C''$ next to $x'$ but before $y'$ following the circular order of
$C''.$  To see that the traces of convex splits of $G$ on $C$ indeed
form a circular split system notice that any convex split
$\sigma(uv)$ of $G$ can be derived from the convex splits of $G'$
and $G''$ in the following way. Suppose without loss of generality
that $uv$ belongs to $G'$ and denote by $W'(u,v)$ and $W'(v,u)$ the
halfspaces defined by the edge $uv$ in $G'.$ If $x\in W'(v,u),$ then
$W(u,v)=W'(u,v)$ and $W(v,u)=W'(v,u)\cup V(G'').$ If $x\in W'(u,v),$
then $W(u,v)=W'(u,v)\cup V(G'')$ and $W(v,u)=W'(v,u).$ From the
construction of $C$ we infer that  the traces of $W(u,v)$ and
$W(v,u)$ on $C$ define indeed a circular split, establishing that
(iii)$\Rightarrow$(i).
\end{proof}

\begin{lemma} If $G$ is an infinite 2-connected cube-free median graph with no induced
subgraph $K_2\Box K_{1,3}$ and no induced suspended cogwheel, then the
traces of convex splits of $G$ on the virtual boundary of $G$
(comprising the endpoints of all finite zonal paths and zonal rays of $G$)
form a circular split system.
\end{lemma}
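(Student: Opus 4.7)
The plan is to reduce the claim to the finite 2-connected case covered by Proposition~\ref{prop:unique_cyclic_order}, using the directed-union description established in Lemma~\ref{directed-union}. Write $G = \bigcup_{i\ge 1} H_i$ as a directed union of convex 2-connected finite squaregraphs with $H_i\subset H_{i+1}$ for all $i$, refining the chain if necessary so that successive $H_i$ differ by the amalgamation of a single 4-cycle along $C_i$. Each $H_i$ has a boundary cycle $C_i$ and, by Proposition~\ref{prop:unique_cyclic_order}, carries a total cyclic order $\beta_i$ on $C_i$ that is unique up to reversal and makes the traces of all convex splits of $H_i$ into arcs of $C_i$.

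The first key step is to orient the $\beta_i$ coherently. Since $H_i$ is convex in $H_{i+1}$, every convex split of $H_i$ is the restriction of a unique convex split of $H_{i+1}$, so the traces of a common pool of convex splits appear as arcs in both $\beta_i$ and $\beta_{i+1}$ when restricted to $C_i\cap C_{i+1}$. Once $|C_i\cap C_{i+1}|\ge 3$, uniqueness up to reversal forces $\beta_{i+1}|_{C_i\cap C_{i+1}}$ to coincide with either $\beta_i|_{C_i\cap C_{i+1}}$ or its reverse; I inductively flip each $\beta_{i+1}$ if necessary so that $\beta_{i+1}$ extends $\beta_i$ on the common vertices. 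Any real vertex $v$ on the virtual boundary of $G$ is an endpoint of a finite zonal border path; the local structure certifying this is already contained in some $H_{i_0}$, so from that index on $v$ remains on $C_j$ for all $j\ge i_0$, which guarantees that enough boundary vertices persist to make each orientation choice unambiguous.

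The second key step is to place the virtual endpoints of zonal rays into the cyclic order. A zonal ray $R$ eventually enters every sufficiently large $H_i$ as a finite subpath whose outward endpoint $r_i(R)$ lies on $C_i$; rays comparable by inclusion share the same sequence $(r_i(R))$ from some index on, so their common virtual endpoint may be identified with this eventual sequence of representatives. Define a ternary relation $\beta^{\ast}$ on the virtual boundary of $G$ by choosing, given any three distinct virtual boundary points, an index $i$ large enough that each of them is represented on $C_i$ (real points by themselves, virtual endpoints by $r_i(R)$), and setting $\beta^{\ast}$ to agree with $\beta_i$ on the triple of representatives. Compatibility of the orientations makes this well defined, and the four Huntington axioms transfer directly from the finite $\beta_i$.

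Finally, I verify that every convex split $\sigma$ of $G$ traces to an arc under $\beta^{\ast}$. Choose $i$ large enough that $H_i$ contains an edge in the $\Theta$-class defining $\sigma$. Then the restriction of $\sigma$ to $H_i$ is a convex split whose trace on $C_i$ is an arc in $\beta_i$, and this determines the trace on real vertices of the virtual boundary. For a virtual endpoint of a ray $R$, the discreteness of the halfspace hypergraph implies that $\sigma$ separates only finitely many pairs of vertices of $R$, so $R$ is eventually contained in a single halfspace of $\sigma$, and its virtual endpoint inherits that classification consistently with the representatives $r_j(R)$. Consequently, the trace of $\sigma$ on the virtual boundary is the coherent limit of its arc-traces on the $C_j$ and is itself an arc. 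The main obstacle I expect lies in the first step, namely ensuring that the orientations of successive $\beta_i$ can always be pinned down uniquely; this requires enough vertices of each $C_i$ to persist in later $C_j$, which is why one must refine the chain so that each extension differs by a single 4-cycle and why the stability of real boundary vertices under enlargement of $H_i$ is essential.
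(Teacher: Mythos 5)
Your overall strategy --- write $G$ as a directed union of convex 2-connected finite squaregraphs via Lemma~\ref{directed-union} and pass the cyclic orders on the boundary cycles $C_i$ to the limit --- is the same as the paper's, but the step you yourself flag as the main obstacle is a genuine gap, and the fix you propose does not work. First, the refinement ``so that successive $H_i$ differ by the amalgamation of a single 4-cycle'' while keeping every intermediate stage a \emph{convex} 2-connected squaregraph is impossible in general: take $H_i$ to be one corner square of the $2\times 2$ grid $K_{1,2}\Box K_{1,2}$ and $H_{i+1}$ the whole grid. The only convex 2-connected intermediate is a domino, and adding any single further square to a domino yields an L-shape of three squares, which is not an intersection of halfspaces and hence not convex. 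So no such chain exists, and convexity of the stages is not optional here --- you need it so that the convex splits of each stage are traces of convex splits of $G$. Second, without the refinement the comparison of $\beta_i$ and $\beta_{i+1}$ breaks down twice over: $C_i\cap C_{i+1}$ can be empty (e.g.\ the central square of a $3\times3$ block of squares has its entire boundary swallowed into the interior of the next stage), and even when it is large, Proposition~\ref{prop:unique_cyclic_order} gives uniqueness of the circularizing cyclic order only on the \emph{full} boundary of a 2-connected squaregraph, not on an arbitrary subset of it; two cyclic orders on a proper subset can both render the traced splits circular without being equal or reversed (already for four points separated by a single split).

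The missing idea, which is how the paper proceeds, is to fix the orientations globally rather than by pairwise comparison: replace each edge of $G$ by two opposite arcs and assign each arc to one incident face so that the arcs of the initial inner face $H_1$ give a counterclockwise traversal; this single choice forces every boundary cycle $C_i$ to be traversed clockwise, so all the $\beta_i$ are coherently oriented by construction and one never needs $C_i\cap C_{i+1}$ to be nonempty. With that in place, the paper defines the ternary relation on endpoints of maximal convex paths by locating, for each triple, the first $H_i$ containing all their pairwise intersections and reading off the clockwise order of their exits on $C_i$; stability for $j>i$ and the Huntington axioms then follow as in your last two paragraphs, which are essentially sound once coherence is secured. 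If you want to keep a purely combinatorial comparison argument, you would need to replace restriction to $C_i\cap C_{i+1}$ by something like the gate map from $C_{i+1}$ onto $H_i$ (as in the proof of Lemma~\ref{amalgam}), and justify that it transports circularizing cyclic orders; as written, the proof does not go through.
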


\begin{proof} By Lemma \ref{directed-union}, $G$ is the directed union
of convex 2-connected finite squaregraphs $H_1\subset H_2\subset\cdots,$
where $H_1$ is an inner face of $G.$ Replace each edge of $G$ by a pair
of opposite arcs and assign each arc to exactly one  incident
face of $G$ in such a way that the arcs assigned to the face $H_1$ define a
counterclockwise traversal of $H_1.$ Then the inner faces of each $H_i$ are also
traversed in the counterclockwise order while the outer face $C_i$ of $H_i$
is traversed in the clockwise order.

Let $P_1,P_2,$ and $P_3$ be three maximal convex paths of $G$. To define a
ternary relation between the endpoints of these paths,
we consider the first squaregraph $H_i$ which is crossed by each of the paths
$P_1,P_2,$ and $P_3$ and which contains all eventual pairwise intersections of these
paths. Let $P_k\cap C_i=\{ u'_k,u''_k\}$  ($k=1,2,3$). Then we define the ternary relation
on the vertices $u'_1,u''_1,u'_2,u''_2,u''_3,u''_3$ as induced by the clockwise
orientation of the cycle $C_i.$ For any $j>i,$ the six intersections of the paths
$P_k$ $(k=1,2,3)$ with $C_j$ induce the same ternary relation because the intersection
pattern of the  paths $P_1,P_2,$ and $P_3$ is the same in $H_i$ and $H_j$ and because
$C_i$ and $C_j$ have the same orientation. Let $\beta$
denote the resulting ternary relation on the set $C$ of endpoints and virtual endpoints of maximal
convex paths of $G.$ Then $\beta$ satisfies the Huntington's axioms because these axioms are defined
on four points and $\beta$ is a total cyclic order on each $C_i$. Thus $\beta$ is a total
cyclic order on $C.$
\end{proof}

Note that, in particular, any tree has a circular split system. In
this case it suffices to take the set $X$ of endpoints. The
characteristic property of the circular split systems derived from a
squaregraph is that the underlying circle is uniquely determined
(for an illustration, see Figure~\ref{fig:dual}). The canonical set $X$
of circular cube-free median graph determining the convex splits would consist
of all endpoints (if any) and all boundaries of squaregraph
blocks minus articulation points. The general duality between median
algebras and binary data tables (Isbell's binary messages) could
then be formulated in terms of canonical boundary subsets $X$ in the
case of circular cube-free median graph.

%\begin{lemma} \label{trace}   Two
%convex splits of a finite squaregraph  $G$ are incompatible exactly when their traces on
%any subset $X$ of $C$ which median-generates $G$  are incompatible.
%Hence the triangle-free incompatibility graphs ${\mathrm I}{\mathrm
%n}{\mathrm c}({\mathcal S}(G))$ and ${\mathrm I}{\mathrm n}{\mathrm
%c}({\mathcal S}(G)|_X)$ are isomorphic.
%\end{lemma}
%
%\begin{proof} It remains to note that by Proposition \ref{strip} every
%zone $Z(uv)$ intersects $C$ exactly in $uv$ if $uv$ is a bridge of
%$G$ and in the two terminal spokes of the ladder $Z(uv)$ otherwise.
%Thus, the trace of $\sigma(uv)$ on $C$ induces a circular split. If
%$\sigma=\{A,B\}$ and $\sigma'=\{ A',B'\}$ are incompatible splits of
%$G$ with compatible traces on $X,$ then $B\cap B'\cap X,$ say, is
%empty. By Proposition~\ref{prop:median_hull}, any vertex $z$ of $B\cap B'$ is the
%median of three vertices from $X\subseteq A\cup A'$ and therefore
%must belong to $A\cup A'$ (because $A$ and $A'$ are halfspaces),
%which is however disjoint from $B\cap B',$ a contradiction.
%\end{proof}

\bigskip\noindent
{\bf Proof of Proposition \ref{prop:forbidden}:} The implication (i)$\Rightarrow$(ii)
follows from Lemma \ref{median}. The converse implication
(ii)$\Rightarrow$(i) is a consequence of Lemmas \ref{2connected} and
\ref{amalgam}. The equivalence (ii)$\Leftrightarrow$(iii) is
obvious. Finally, the equivalence between the conditions (ii) and
(iv) follows from Lemma \ref{amalgam} and the fact mentioned in Section~\ref{sec:smallgen}
that the
incompatibility graph Inc$({\mathcal S}(G))$  of convex splits of a
squaregraph $G$ is isomorphic to the
incompatibility graph Inc$({\mathcal S}(G)|_X)$ of traces of convex
splits on any median-generating subset $X$ of $C.$

\bigskip\noindent
{\bf Proof of Proposition \ref{prop:forbidden-infinite}:} To every finite set $W$ in a median
graph $G$ add some shortest path for each pair of vertices from $W.$
Then the extension $X$ of $W$ is finite and hence its convex hull is
a finite median algebra, which is a median subgraph of $G$ because
$X$ induces a subgraph of $G.$ Then the implications
(i)$\Rightarrow$(ii)$\Rightarrow$(iii)$\Rightarrow$(i) and
(iv)$\Rightarrow$(iii) are obvious. Assume that (iv) is violated,
then select a finite subset $U$ that testifies to a forbidden
intersection pattern of the halfspaces. Moreover, for each
halfspace, add to $U$ a pair of adjacent vertices separated by that
halfspace. Then expand this extended set $W$ further to a finite set
$X$ inducing a connected subgraph of $G.$ The convex hull $F$ of $X$
is then a finite median graph where all involved halfspaces of $G$
leave their distinct traces with the same kind of forbidden
intersection pattern, that is, (iii) is violated. Moreover, suppose
there was a total cyclic order on which the system of convex splits of $G$
becomes circular. Then a finite subset $C$ of this cyclic order
would faithfully represent the intersection pattern of the
halfspaces of $F.$ However, three pairwise disjoint halfspaces would
leave a trace of three pairwise disjoint cycle segments, which
cannot be met simultaneously by a single cycle segment. Moreover, a
cyclic intersection pattern of halfspaces yields a cyclic
intersection pattern of cycle segments which cover the entire cycle,
so that there is no space for a proper segment to cover all those
segments. Therefore (v) must be violated. This shows that (v) implies (iv). 
Finally, to show that (ii) implies (v), it suffices to note that for
the directed union the finite total cyclic orders consistently go along with
it, so that we can represent the system of convex splits on the
directed union of the corresponding cyclic orders.

\section{Proofs for Section~\ref{sec:line-arrangements}}
\label{proofs:line-arrangements}

In order to prove Theorem ~\ref{theorem:infinite} we first consider the balls in finite squaregraphs.

\begin{lemma}
\label{lem:Gvr-is-square}
Let $G$ be a finite squaregraph, $v$ be any vertex of $G$, and $r$ be any nonnegative integer.
The ball $B_r(v)$ induced by the vertices that are at distance at most $r$ from $v$ in $G$ is a squaregraph.
\end{lemma}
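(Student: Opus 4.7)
The plan is to verify that $B_r(v)$ meets condition (ii) of Proposition~\ref{prop:forbidden}: it is a finite median graph containing no induced cube, no induced $K_2\Box K_{1,3}$, and no induced suspended cogwheel. Absence of the three forbidden subgraphs in $B_r(v)$ is immediate, since every induced subgraph of $B_r(v)$ is also an induced subgraph of $G$ and $G$ is a squaregraph (Lemmas~\ref{median} and~\ref{amalgam}). The real work is to show that $B_r(v)$ is itself a median graph, which decomposes into two claims: $B_r(v)$ is isometric in $G$, and its vertex set is closed under the median operation of $G$.

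For isometry, given $u, w \in B_r(v)$, set $m_1 := m(u, w, v)$. Since $m_1 \in I(u, v) \cap I(w, v)$, we have $d(v, m_1) \le \min\{d(v,u), d(v,w)\} \le r$, so $m_1 \in B_r(v)$. Any vertex on a shortest path from $u$ to $m_1$ lies in $I(u, m_1) \subseteq I(u, v)$ and hence in $B_r(v)$; likewise for the $m_1$-to-$w$ portion. Concatenating yields a shortest $u$-to-$w$ path of $G$ entirely inside $B_r(v)$, so $d_{B_r(v)}(u,w) = d_G(u,w)$.

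For median closure, let $a,b,c \in B_r(v)$ and $m := m(a,b,c)$. Embed $G$ isometrically into a hypercube so that coordinates correspond to convex splits of $G$, and for each $x \in \{a,b,c\}$ let $\Delta_x$ denote the set of coordinates on which $x$ differs from $v$. Then $d(v,x) = |\Delta_x|$ and
\[
d(v,m) \;=\; |(\Delta_a \cap \Delta_b) \cup (\Delta_b \cap \Delta_c) \cup (\Delta_a \cap \Delta_c)|.
\]
Suppose this exceeds $\max\{|\Delta_a|, |\Delta_b|, |\Delta_c|\}$. Then the three sets $(\Delta_a \cap \Delta_b) \setminus \Delta_c$, $(\Delta_a \cap \Delta_c) \setminus \Delta_b$, and $(\Delta_b \cap \Delta_c) \setminus \Delta_a$ are all non-empty, and picking one coordinate from each yields three convex splits of $G$ that are pairwise incompatible, with the four intersections required for each pair witnessed by the quadruple $v, a, b, c$. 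In any median graph, three pairwise incompatible convex splits force an induced 3-cube, contradicting the cube-freeness of $G$ supplied by Lemma~\ref{median}. Hence $d(v,m) \le \max\{d(v,a), d(v,b), d(v,c)\} \le r$, so $m \in B_r(v)$.

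Combining the two parts, $B_r(v)$ is an isometric, median-closed subgraph of $G$, and therefore a median graph; Proposition~\ref{prop:forbidden} then identifies it as a squaregraph. The main obstacle lies in the median-closure step: since balls in median graphs are not generally convex (for example, in a 4-cycle the vertex antipodal to $v$ lies on an interval between two elements of $B_1(v)$ yet is not itself in $B_1(v)$), one cannot appeal to gated-subgraph machinery, and cube-freeness must instead be invoked through the incompatibility structure of the convex splits.
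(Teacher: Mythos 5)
Your proof is correct, but it takes a genuinely different route from the paper's. The paper argues by induction on the number of vertices, working directly with the plane embedding: if $G\ne B_r(v)$, pick a vertex $w$ maximally distant from $v$; by Lemma~\ref{lem:far-deg2} it has degree at most two and lies, together with its neighbours, on the outer face, so deleting it leaves every inner vertex and every inner face of $G$ intact, hence yields a smaller squaregraph still containing $B_r(v)$, and one iterates. You instead verify the combinatorial characterization of Proposition~\ref{prop:forbidden}(ii), which requires the two nontrivial facts that $B_r(v)$ is isometric in $G$ (your argument via the median $m(u,w,v)$ and the inclusions $I(u,m_1)\subseteq I(u,v)$, $I(m_1,w)\subseteq I(v,w)$ is sound) and median-closed (your coordinate computation in the canonical hypercube embedding is sound, granting the standard fact---used implicitly in the paper, e.g.\ in the equivalence (ii)$\Leftrightarrow$(iii) of Proposition~\ref{prop:forbidden} and in the assertion of Section~\ref{sec:tree-products} that cube-free median graphs have triangle-free incompatibility graphs---that three pairwise incompatible convex splits of a median graph span a convex $3$-cube; this is the one ingredient you invoke beyond the cited lemmas, and you should cite it explicitly). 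What the paper's route buys is brevity and the reuse of Lemma~\ref{lem:far-deg2}; what yours buys is the stronger intermediate statements that balls in finite cube-free median graphs are isometric and median-closed subgraphs, and an argument that nowhere uses planarity, so it transfers directly to the setting of Proposition~\ref{prop:forbidden-infinite} and Theorem~\ref{theorem:infinite}. Your closing remark that balls are not convex, so gatedness cannot be used, correctly identifies why the median-closure step is the crux of your approach.
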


\begin{proof}
If $G=B_r(v)$, the result is clearly true. Otherwise, fix an embedding of $G$ as a squaregraph and let $w$ be a vertex of $G$ that is maximally distant from $v$; $B_r(v)\subset G\setminus\{w\}\subset G$.
By Lemma~\ref{lem:far-deg2}, each inner vertex of $G\setminus\{w\}$ is also an inner vertex of $G$ with the same set of neighbors, and each inner face of $G\setminus\{w\}$ is also an inner face of $G$, so $G\setminus\{w\}$ is a squaregraph and the result follows by induction on the number of vertices in~$G$.
\end{proof}

\begin{figure}[t]
\includegraphics[width=3in]{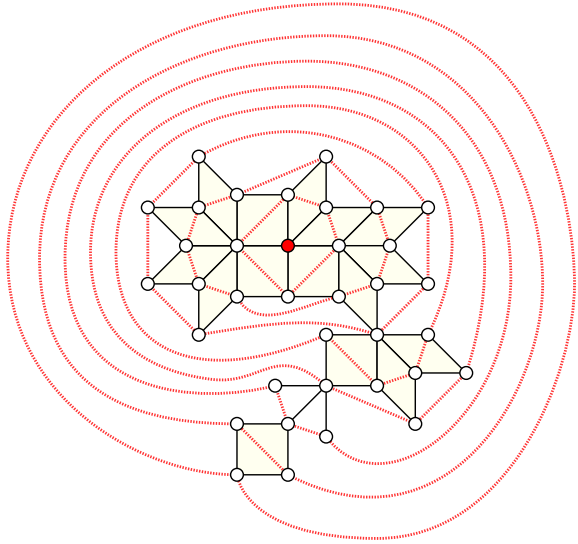}
\caption{Illustration of Lemma~\ref{lem:concentric}: for any finite squaregraph $G$ the vertices may be connected into concentric cycles according to their distances from an arbitrarily chosen starting vertex, and the cycles embedded in the plane consistently with $G$.}
\label{fig:concentric}
\end{figure}

\begin{lemma}
\label{lem:concentric}
Let $G$ be a finite squaregraph, and let $v$ be any vertex of $G$. For each positive integer $i$ let $S_i(v)$ be the set of vertices of $G$ that are at distance $i$ from $v$. Then one can find a planar supergraph $H\supset G$, and a planar embedding of $H$ consistent with the embedding of $G$, such that in $H$ each set $S_i(v)$ is connected into a cycle, and such that these cycles are embedded concentrically surrounding $v$.
\end{lemma}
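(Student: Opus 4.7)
The plan is to proceed by induction on $i$, exploiting Lemma~\ref{lem:Gvr-is-square}: each ball $B_i(v)$ is itself a finite squaregraph, embedded in the plane via the restriction of the given embedding of $G$. At stage $i$ I will extend the supergraph obtained at stage $i-1$ by adjoining a cycle through $S_i(v)$ drawn in the region immediately exterior to $B_i(v)$, so that the new cycle encloses all the cycles added at previous stages.

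As a preparatory step I would show that every $u\in S_i(v)$ lies on the outer face of $B_i(v)$ in its inherited planar embedding. Because $G$ is bipartite, the neighbors of $u$ in $G$ are at distance $i-1$ or $i+1$ from $v$, and those at distance $i+1$ are deleted in passing to $B_i(v)$. Adapting the cube-freeness argument used in the proof of Lemma~\ref{lem:far-deg2}, if $u$ had three neighbors $a,b,c$ at distance $i-1$ then the medians $m(a,b,v), m(a,c,v), m(b,c,v)$ together with their common median would complete an induced $3$-cube, contradicting Lemma~\ref{median}. Hence $u$ has at most two neighbors in $B_i(v)$; since every inner vertex of a squaregraph has degree at least four, $u$ must lie on the boundary of the outer face of $B_i(v)$.

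Next I would traverse the closed walk $W_i$ bounding the outer face of $B_i(v)$ in, say, counterclockwise order. By the preceding step every vertex of $S_i(v)$ appears at least once on $W_i$, and every vertex appearing on $W_i$ belongs to $\bigcup_{j\le i}S_j(v)$. Reading off the appearances of $S_i(v)$-vertices in the order they occur on $W_i$ yields a cyclic sequence $(a_1,\ldots,a_{s_i})$; for each $k$ I would add to $H$ a new edge joining $a_k$ to $a_{k+1\bmod s_i}$, drawn just outside the sub-arc of $W_i$ that links $a_k$ to $a_{k+1}$. Because these sub-arcs are pairwise edge-disjoint portions of the common outer boundary of $B_i(v)$, the new edges can be routed without mutual crossings and without crossing any edge of $B_i(v)$. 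The resulting closed curve is a planar cycle through $S_i(v)$ that encloses $B_{i-1}(v)$, hence the cycle through $S_{i-1}(v)$ supplied by the induction hypothesis.

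The main obstacle will be the case in which $B_i(v)$ fails to be 2-connected: then $W_i$ is only a closed walk, and it may revisit vertices, possibly including $S_i(v)$-vertices that are articulation points of $B_i(v)$. Consequently the ``cycle through $S_i(v)$'' produced by the shortcut construction can become a multigraph closed walk with parallel edges or loops, and the degenerate cases $s_i\in\{1,2\}$ must be handled individually. These pathologies can be absorbed either by interpreting ``cycle'' in the lemma as a closed walk (which is all that is needed for the application to Theorem~\ref{theorem:infinite}), or by introducing auxiliary subdivision vertices in $H$ inside the annular region between $W_{i-1}$ and $W_i$, splitting multi-edges and loops into simple edges of an ordinary cycle while preserving both planarity and concentricity.
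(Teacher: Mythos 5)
Your approach is genuinely different from the paper's: you induct on the radius $i$ and attach a whole layer-cycle at each stage, whereas the paper inducts on the number of vertices, repeatedly deleting a vertex $w$ maximally distant from $v$ (which by Lemma~\ref{lem:far-deg2} has degree at most two and lies on the outer face) and then showing in three explicit cases how to re-insert $w$ into the outermost cycle of the augmented embedding of $G\setminus\{w\}$. The paper's ordering has the advantage that when $w$ is re-inserted nothing of $G$ lies beyond it, so no routing conflicts with the rest of $G$ can arise. Your preparatory step is fine: the cube argument of Lemma~\ref{lem:far-deg2} does show that a vertex of $S_i(v)$ has at most two down-neighbors, hence degree at most two in $B_i(v)$ and is therefore on its outer face.

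The genuine gap is in the routing claim. You verify only that the new edges avoid ``any edge of $B_i(v)$,'' but $H$ must be a planar supergraph of all of $G$ with an embedding consistent with $G$'s embedding, and the region just outside $W_i$ --- exactly where you draw the stage-$i$ cycle --- is precisely where the edges of $G$ joining $S_i(v)$ to $S_{i+1}(v)$ emanate, and where the later cycles for $S_j(v)$, $j>i$, will also have to live. If a new edge is routed on the wrong side of such an up-edge at $a_k$, the stage-$i$ cycle encloses a vertex of $S_{i+1}(v)$ and concentricity fails. This is repairable --- every interior vertex of a sub-arc between consecutive appearances of $S_i(v)$-vertices is at distance $<i$ and hence has all its $G$-neighbors inside $B_i(v)$, so each sub-arc is bordered from outside by a single face of $G$ in which the shortcut edge can be drawn; one must then also fix the rotation at each $a_k$ so that its up-edges fall outside the cycle, and argue that cycles of distinct layers drawn in a common face of $G$ nest correctly --- but none of this is in your write-up, and it is the heart of the matter. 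Separately, the obstacle you flag as the main one is not actually there: a vertex $u\in S_i(v)$ cannot be an articulation point of $B_i(v)$, since no shortest path from any other vertex of $B_i(v)$ to $v$ passes through $u$ (that would force $d(u,x)\le 0$), so $B_i(v)\setminus\{u\}$ remains connected and each $S_i(v)$-vertex appears exactly once on $W_i$. The only degeneracies are $s_i\in\{1,2\}$, which the paper also tolerates (its Case 1 explicitly attaches a self-loop), so no reinterpretation of ``cycle'' is needed beyond what the paper itself uses.
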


\begin{proof}
We will use induction on the number of vertices of $G$.
Let $w$ be any vertex maximally distant from $v$, and let $G'=G\setminus\{w\}$. As above, $G'$ is a squaregraph, so by the induction hypothesis it has a plane-embedded supergraph $H'\supset G'$ with the properties described in the lemma. As we now show, we may add $w$ back to $G'$, forming $G$, and simultaneously modify $H'$ to form the desired supergraph $H$. There are three cases:

\begin{enumerate}
\item If $w$ is the only vertex of $G$ at distance $d(v,w)$ from $v$, then the outer face of $H'$ consists of the cycle connecting all vertices at distance $d(v,w)-1$, and must include all neighbors of $w$. We can extend $H'$ to $H$ by connecting $w$ to its neighbors by edges that are drawn outside this cycle and then adding a self-loop attached at $w$ that surrounds the rest of the graph including the new edges.

\item If there are other vertices of $G$ at the same distance as $w$ from $v$, and $w$ has a single neighbor $x$ in $G$, then $x$ belongs to the second-from-outermost cycle of $H'$, and there is a unique face $f$ of the embedding of $H'$ that includes both $x$ and an edge of the outer cycle into which $w$ can be added consistently with the embedding of $G$. In this case, we may form $H$ by splitting the outer edge of $f$, placing $w$ at the split point, and adding back the edge connecting $w$ to $x$ within $f$.

\item In the remaining case, there are other vertices of $G$ at the same distance as $w$ from $v$, and $w$ has two neighbors $x$ and $y$ in $G$ that belong to the second-from-outermost cycle. Let $z$ be the median of $x$, $y$, and $v$; $z$ belongs to the third-from-outermost cycle, and by Lemma~\ref{4corners} the four vertices $x$, $y$, $z$, and $w$ form a face of $G$. In the embedding of $G$ (and of $G'$) $x$ and $y$ appear consecutively among the neighbors of $z$, and therefore they also appear consecutively on their cycle. There exists a unique face $f$ of the embedding of $H'$ that contains edge $xy$ and an edge of the outer cycle; as in the previous case we may form the desired embedding of $H$ by splitting the outer cycle edge, adding $w$ at the split point, and routing the edges $wx$ and $wy$ through $f$.
\end{enumerate}
\end{proof}

Figure~\ref{fig:concentric} illustrates Lemma~\ref{lem:concentric}. Note that the lemma cannot be generalized to all planar graphs: the planar graph $K_{2,4}$, with $v$ chosen as one of its degree-two vertices, cannot be augmented to have concentric cycles like those of the lemma.

\begin{lemma}
\label{lem:locally-finite-line-arrangement} For a line arrangement $\mathcal A$ in the hyperbolic plane, the following assertions are equivalent:

\begin{itemize}
\item[(i)]  $\mathcal A$ is locally finite;
\item[(ii)]	$\mathcal A$ is countable and the cell copair hypergraph of $\mathcal A$ is discrete;
\item[(iii)]	$\mathcal A$ is countable and the dual graph of $\mathcal A$ is connected.
\end{itemize}
\end{lemma}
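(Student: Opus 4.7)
The plan is to establish the circuit of implications (i)$\Rightarrow$(ii), (i)$\Rightarrow$(iii), (iii)$\Rightarrow$(ii), and (ii)$\Rightarrow$(i), which yields the desired three-way equivalence. The implications starting from (i) follow from routine compactness arguments using local finiteness, (iii)$\Rightarrow$(ii) is bookkeeping, and the bulk of the work lies in (ii)$\Rightarrow$(i).

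For (i)$\Rightarrow$(ii), I would first observe that the hyperbolic plane is a countable union of compact disks (balls of integer radius about a fixed basepoint), each meeting only finitely many lines of $\mathcal A$ by local finiteness, so $\mathcal A$ is countable. For discreteness of the cell copair hypergraph, pick interior points $p_i$ of cells $c_i$ and join them by the hyperbolic geodesic segment $\gamma$; compactness of $\gamma$ combined with local finiteness shows that $\gamma$ meets finitely many lines, and every line separating $c_1$ from $c_2$ must cross $\gamma$. For (i)$\Rightarrow$(iii), the countability argument is the same, and for connectedness I take the same segment and perturb it generically so that it avoids the finitely many pairwise intersection points of lines it meets; then it crosses one line at a time, tracing out a walk in the dual graph from $c_1$ to $c_2$. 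For (iii)$\Rightarrow$(ii), if a walk in the dual graph has length $n$, then at most $n$ lines of $\mathcal A$ can separate its endpoints, so the hypergraph is discrete.

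The main obstacle is (ii)$\Rightarrow$(i), which I would handle by contradiction. Suppose $\mathcal A$ is countable and the cell copair hypergraph is discrete, yet some open disk $D$ is crossed by infinitely many lines $\ell_1,\ell_2,\ldots$. In the Klein model, each hyperbolic line is a chord of the boundary circle $\partial\mathbb H^2\cong S^1$, parameterized by its unordered pair of distinct endpoints. The set of lines crossing the compact disk $\overline D$ is a compact subset of the space of all lines, so I may pass to a subsequence whose endpoints converge in $S^1\times S^1$. The two limit endpoints must be distinct, for otherwise the chords would eventually lie outside a smaller concentric disk, contradicting $\ell_n\cap D\neq\emptyset$; hence the subsequence converges to a genuine hyperbolic line $\ell_\infty$ crossing $\overline D$. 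Since the union of the countable family $\mathcal A$ is a meager subset of $\mathbb H^2$, I can pick points $q_1,q_2$ on opposite sides of $\ell_\infty$ that avoid $\ell_\infty$ and every line of $\mathcal A$. For all sufficiently large $n$, $\ell_n$ is close enough to $\ell_\infty$ (uniformly on a compact piece of $\mathbb H^2$ containing the segment $q_1q_2$) to separate $q_1$ from $q_2$, so the cells containing $q_1$ and $q_2$ are separated by infinitely many lines of $\mathcal A$, contradicting discreteness.

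The hardest step is this compactness argument, specifically the extraction of a convergent subsequence in the space of hyperbolic lines and the verification that the limit is a proper line rather than a degenerate pair of coincident boundary points; once $\ell_\infty$ is in hand the contradiction is immediate. Everything else reduces to compactness of geodesic segments under local finiteness, a single generic perturbation, and a counting argument in the dual graph.
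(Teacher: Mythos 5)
Your proof is correct, but it closes the equivalence along a genuinely different route from the paper in the one nontrivial direction. The paper proves the cycle (i)$\Rightarrow$(ii)$\Rightarrow$(iii)$\Rightarrow$(i), and recovers local finiteness from \emph{connectedness of the dual graph}: a bounded region is enclosed in a polygon chosen (using countability) to avoid crossing points of $\mathcal A$; each polygon edge is crossed exactly by the lines separating the cells of its endpoints, whose number equals the dual-graph distance and is therefore finite; and every line meeting the region must cross some polygon edge. You instead prove (ii)$\Rightarrow$(i) directly from \emph{discreteness} by a compactness argument in the Klein model: infinitely many chords meeting a fixed compact disk admit a subsequence whose endpoint pairs converge in $S^1\times S^1$, the degenerate limit is excluded because near-degenerate chords leave any compact subset of the open disk, and two points chosen off all lines (Baire category, using countability here) on opposite sides of the limit chord are then separated by infinitely many lines of $\mathcal A$, contradicting discreteness. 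Both arguments are sound; yours trades the paper's combinatorial crossing count for an analytic limit argument and relocates the use of countability. Your remaining implications match the paper's in substance, and your direct (iii)$\Rightarrow$(ii) --- a dual walk of length $n$ certifies at most $n$ separating lines, since each separating line must be the unique line crossed at some step of the walk --- is a clean shortcut that the paper does not state, as it reaches (ii) from (iii) only via (i).
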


\begin{proof} (i) $\Rightarrow$ (ii): The hyperbolic plane is a countable union of balls $B_n(v)$ around any point $v$ where $n$
runs through the positive integers. Since each $B_n(v)$ meets only finitely many lines of $\mathcal A$, there are no more than
countably many lines in $\mathcal A$ altogether. The line segment between any two points $u$ and $v$ from different cells is
included in some ball and hence meets only finitely many lines from $\mathcal A$. Therefore the cell copair hypergraph of $\mathcal A$ is discrete.
	
(ii) $\Rightarrow$ (iii): The line segment $S$ between any two points $u$ and $v$ from different cells is separated by finitely many
lines from the arrangement. Since $\mathcal A$ is countable, we may assume that $u$ and $v$ are chosen so that every point on the
line segment $S$ meets at most one line from $\mathcal A$. Then removing from $S$ these individual crossing points with the lines
from $\mathcal A$ results in a finite sequence of (half-)open segments that each extend to cells such that two consecutive
cells are separated by exactly one line from $\mathcal A$. Therefore the dual graph is connected.
	
(iii) $\Rightarrow$ (i): If $R$ is a bounded region of the hyperbolic plane, surround $R$ by a polygon $P$; the assumption that $\mathcal A$ is countable
guarantees that $P$ can be chosen so that it does not pass through any crossing point of $\mathcal A$. Each edge of $P$ is crossed only by the lines
of $\mathcal A$ that separate the two endpoints of the edge and that must be crossed by any path between those endpoints; therefore, the number of
lines crossed by the edge is equal to the length of the shortest path in the graph between the vertices dual to the endpoints' cells, and is
finite. Any line of $\mathcal A$ that intersects $R$ crosses one of the finitely many edges of $P,$ so the total number of lines of $\mathcal A$
intersecting $R$ is finite.
\end{proof}

The assumption of countability in (ii) and (iii)  is necessary, as the set of all hyperbolic lines has an empty cell copair hypergraph and empty dual graph.

\begin{lemma}
\label{lem:locally-finite-triangle-free-arrangement} For a locally finite line arrangement $\mathcal A$ in the hyperbolic plane,
the following statements are equivalent:

\begin{itemize}
\item[(i)]  $\mathcal A$ is triangle-free;
\item[(ii)]	the cell copair hypergraph of $\mathcal A$ is $^*$Helly;
\item[(iii)] the dual graph of 	$\mathcal A$ is median.
\end{itemize}
\end{lemma}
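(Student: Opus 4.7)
My plan is to close the cycle (i)$\Rightarrow$(ii)$\Rightarrow$(iii)$\Rightarrow$(i). For (iii)$\Rightarrow$(i) I argue contrapositively: three pairwise crossing lines $\ell_1,\ell_2,\ell_3$ of $\mathcal A$ partition the hyperbolic plane into seven open regions and leave the sign vector opposite to the triangle interior, say $(-,-,-)$, unrealized; picking one cell of $\mathcal A$ in each of the three ``vertex'' regions of the triangle produces a triplet $u,v,w$ with signs $(+,-,-), (-,+,-), (-,-,+)$ on $\{\ell_1,\ell_2,\ell_3\}$ whose majority is the missing $(-,-,-)$, so no cell of $\mathcal A$ can serve as their median and the dual graph is not median.

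For (i)$\Rightarrow$(ii) I verify the triplet criterion of Proposition~\ref{prop:Helly*}. Fix three cells $u,v,w$; for every line $\ell$ exactly one of its open halfplanes, the majority halfplane $H_\ell^{\mu_\ell}$, contains at least two of them, so the condition reduces to $\bigcap_\ell H_\ell^{\mu_\ell}\neq\emptyset$. The key local claim is that every three such majority halfplanes already intersect: any three lines of $\mathcal A$ form a finite triangle-free sub-arrangement, whose planar dual is a finite squaregraph by the correspondence established at the start of Section~\ref{sec:line-arrangements} and hence a median graph, so the majority sign vector $(\mu_1,\mu_2,\mu_3)$ of $u,v,w$ is realized by a cell of that sub-arrangement, giving $H_1^{\mu_1}\cap H_2^{\mu_2}\cap H_3^{\mu_3}\neq\emptyset$. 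To pass from triples to the full intersection I invoke Helly's theorem in the hyperbolic plane: choose a closed hyperbolic ball $\overline B$ containing $u,v,w$; by local finiteness only finitely many lines of $\mathcal A$ meet $\overline B$, while every line disjoint from $\overline B$ has its majority halfplane already containing $\overline B$. Applying Helly to the finite family comprising $\overline B$ together with the majority halfplanes of lines meeting $\overline B$, every three of these convex sets intersect---triples of halfplanes by the claim above, and any pair of halfplanes with $\overline B$ because $|H_1\cap\{u,v,w\}|+|H_2\cap\{u,v,w\}|\ge 4$ forces at least one of $u,v,w$ to sit in $H_1\cap H_2\cap\overline B$---so they share a common point $p$. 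Since $p\in\overline B$, $p$ also lies in each far-away majority halfplane, and because all halfplanes are open $p$ avoids every line of $\mathcal A$ and therefore sits in the interior of a single cell of $\mathcal A$ with sign vector $\mu$, witnessing the triplet condition.

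The implication (ii)$\Rightarrow$(iii) is then a direct application of Proposition~\ref{prop:Mulder_Schrijver}: the cell copair hypergraph is discrete by Lemma~\ref{lem:locally-finite-line-arrangement}, separates the points because two distinct cells are divided by at least one line, and is ${}^*$Helly by hypothesis, so it is the halfspace hypergraph of a median graph on the cell set, whose edges---pairs of cells separated by exactly one split---are precisely the adjacencies of the dual graph of $\mathcal A$. The main obstacle is the Helly-type reasoning in (i)$\Rightarrow$(ii): one must verify that every three convex sets in the finite family intersect, and the nontrivial case---triples of majority halfplanes---is exactly where triangle-freeness is decisive, being bootstrapped through the already-available squaregraph/median duality applied to the three-line sub-arrangement, while the ball $\overline B$ cooperates in the Helly hypothesis through the pigeonhole observation that majority halfplanes always contain at least one of the three chosen cells.
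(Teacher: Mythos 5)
Your proof is correct and its skeleton coincides with the paper's: the same cycle (i)$\Rightarrow$(ii)$\Rightarrow$(iii)$\Rightarrow$(i), the same reduction of (i)$\Rightarrow$(ii) to the triplet criterion of Proposition~\ref{prop:Helly*} followed by planar Helly on a finite family of convex sets, the same appeal to Proposition~\ref{prop:Mulder_Schrijver} for (ii)$\Rightarrow$(iii), and the same median-of-three-cells contradiction for (iii)$\Rightarrow$(i). Two steps are handled differently. For the key fact that three pairwise intersecting majority halfplanes meet, you dualize the three-line sub-arrangement to a finite squaregraph and invoke medianness; the paper instead uses the elementary observation that three pairwise intersecting open halfplanes with empty common intersection force their three boundary lines to cross pairwise, which triangle-freeness forbids. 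Your route is legitimate (the correspondence from Section~\ref{sec:line-arrangements} is available and not circular here) but heavier than the one-line geometric argument. Likewise, you cut down to a finite family by intersecting with a ball $\overline B$, whereas the paper splits the relevant halfplanes into those containing exactly two versus all three of $u,v,w$ and replaces the latter by their single convex intersection; both exploit discreteness in the same way. One point you should make explicit in (iii)$\Rightarrow$(i): the claim that a median cell would have to carry the unrealized sign vector $(-,-,-)$ presupposes that each halfplane cell-set is convex in the dual graph, equivalently that every shortest dual path crosses each line at most once; this follows from $d(u,v)$ equalling the number of separating lines, which is exactly what the segment construction in the proof of Lemma~\ref{lem:locally-finite-line-arrangement} supplies, and the paper states and uses this before drawing the contradiction. (Also, since cells may be unbounded, $\overline B$ should be chosen to contain a point of each of $u,v,w$ rather than the cells themselves.)
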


\begin{proof} (i) $\Rightarrow$ (ii): We will apply Proposition \ref{prop:Helly*} to the cell copair hypergraph of $\mathcal A$, which is discrete by
the preceding lemma. Let $u,v,w$ be any three points of the hyperbolic plane belonging to different cells of $\mathcal A$. Consider the collection
${\mathcal D}$ of open halfplanes that are bounded by lines from $\mathcal A$ and contain at least two of $u,v,w.$ Then $\mathcal D$ partitions
into ${\mathcal D}_2$ and ${\mathcal D}_3$ of those halfplanes that contain exactly two of $u,v,w$ and all three of them, respectively.
The intersection of ${\mathcal D}_3$ contains $u,v,w$ and is convex. Since the cell copair hypergraph is discrete, ${\mathcal D}_2$
is a finite collection intersecting in pairs and hence in triplets by virtue of (i). Then the finite collection ${\mathcal D}_2\cup\{\cap {\mathcal D}_3\}$
of convex sets triplewise intersects and hence, by Helly's theorem, has a nonempty intersection (which includes a cell of ${\mathcal A}$). This proves
that the cell copair hypergraph is $^*$Helly.

(ii) $\Rightarrow$  (iii): Since the cell copair hypergraph separates the points (cells) by definition and is $^*$Helly,
it is the halfspace hypergraph of a median graph on the same vertex (cell) set. Necessarily, this median graph must be the dual
graph of the arrangement because its edges are characterized in terms of separation by a unique line.

(iii) $\Rightarrow$ (i): From the preceding lemma we know that the dual graph $G$ of $\mathcal A$ is connected such
that any two distinct cells $Y$ and $Z$ are separated by no more than finitely many lines from $\mathcal A$. These lines
and no others cross the line segment between any two points $y$ from $Y$ and $z$ from $Z,$ where we may assume that
$y$ and $z$ are chosen so that no point of the segment between $y$ and $z$ belongs to more than one line of $\mathcal A.$ As in the part (ii) $\Rightarrow$ (iii)
of the proof of Lemma \ref{lem:locally-finite-line-arrangement}, we can then extend the open subsegments between consecutive
crossing lines to cells of $\mathcal A$, which thus constitute the inner vertices of a shortest path between the vertices
$Y$ and $Z$ in the dual graph of $\mathcal A$. This, in particular, shows that every shortest path between cells
traverses a line from $\mathcal A$ at most once. Now suppose, by way of contradiction, that
$\mathcal A$ was not triangle-free, that is, there were three pairwise intersecting open halfplanes bounded by
lines from $\mathcal A$. Select a cell each within each of their pairwise intersections. By what has just been shown,
the median cell of the three selected cells belongs to all three open halfplanes, which is impossible.
\end{proof}

\bigskip\noindent
{\bf Proof of Theorem~\ref{theorem:infinite}:} (i) $\Rightarrow$ (iii): We are given a
locally finite squaregraph embedding $G$, and must show that there exists a countable chain of finite squaregraphs covering $G$.
%The condition that any bounded subset intersects finitely many
%edges implies bounded degree: in particular, any vertex $v$ can have degree no more
%than the number of edges intersecting some particular bounded subset with $v$ in its
%interior.
For each positive integer $i$, let $C_i$ be a circle
centered at the origin with radius $i$. Form the arrangement of curves
consisting of $C_i$ together with all edges of the drawing of $G$ that start
and end within $C_i$, and let $R_i$ be the bounded region complementary to
the single unbounded face of this arrangement. Define $G_i$ to be the
subgraph of $G$ induced by the vertices within $R_i$. Then the induced drawing of
$G_i$ has as its inner faces the inner faces of $G$ that are entirely
contained within $R_i$, together with one unbounded face, so its inner
faces are all quadrilaterals, and any vertex that is entirely
surrounded by inner faces has degree four or more. Thus, $G_i$ is a
finite squaregraph, and the sequence of squaregraphs formed for
increasing values of $i$ form a chain that covers $G$.

(iii) $\Rightarrow$ (ii): We are given a countable chain of finite squaregraphs
covering $G$, and must show that each fixed-radius induced subgraph $B_r(v)$ is also a squaregraph.
If the chain covers $G$, some squaregraph $G_i$ in the chain covers
all of the (finitely many) vertices and edges in $B_r(v)$. Applying Lemma~\ref{lem:Gvr-is-square}
to $G_i$ and its subgraph $B_r(v)$ shows that $B_r(v)$ is also a squaregraph.

(ii) $\Rightarrow$ (i): We are given a graph $G$  in which each fixed-radius induced subgraph
$B_r(v)$ is a finite squaregraph, and must show that $G$ has a locally finite squaregraph embedding.
We assume that $G$ is infinite, for otherwise the result follows trivially. Fix some particular vertex
$v$ of $G$. We form an infinite tree, the nodes of
which represent equivalence classes of embeddings of $B_i(v)$ for some $i$, together
with a cyclic augmentation $H_v^i$ as described by Lemma~\ref{lem:concentric}; two augmented embeddings are
considered to be equivalent if they have the same value of $i$ and the nodes in each of their cycles appear in the
same cyclic order. We connect these nodes into a tree by making the parent of a node $H_v^i$ be the node
$H_v^{i-1}$ formed by deleting the outer cycle from $H_v^i$. Each node of this tree has finite degree
(since there are finitely many ways in which one might choose a cyclic permutation of the finitely
many vertices in the next layer of $G$) but the tree has infinitely many nodes, so by K\"onig's
infinity lemma there exists an infinite path in the tree, giving us a choice of cyclic ordering
of all the layers of $G$ that is consistent with a planar embedding of $G$. To transform
this combinatorial embedding into an actual embedding of $G$, place the
vertices that are at distance $i$ from $v$ onto a circle of radius $i$ centered at the
origin, with the edges between vertices at distance $i-1$ and distance $i$
placed within an annulus between two such circles. Each of these annuli contains finitely
many vertices and edges, so the whole embedding formed in this way is locally finite as required.

(i) \& (ii) $\Rightarrow$ (iv): We are given a locally finite squaregraph embedding of
$G$ and must show that $G$ is a connected component of the dual graph of a triangle-free hyperbolic line arrangement.
It is convenient to use the Klein model of the hyperbolic plane, in which the plane is modeled
by a Euclidean unit disk and a hyperbolic line is modeled by a line segment connecting two
points on the Euclidean unit circle. We will use the embedding of $G$ to choose a cyclic order
for which the splits of $G$ form a circular split system, and form a line for each split such
that the lines have distinct endpoints and induce the same split system on the unit circle.
By Lemma~\ref{strip} and property (ii), the splits of $G$ correspond to zones in the form
of (possibly infinite) ladders. By choosing arbitrarily an orientation for each zone we may
distinguish its two ends, corresponding to the two endpoints of the Klein model line segment
we will construct to represent a hyperbolic line. The given embedding of $G$ determines a
cyclic ordering on the ends of the zones, which may be computed as follows: given three ends
of zones $a$, $b$, and $c$, choose a vertex $v$ of $G$ and a radius $r$ sufficiently large
that the ball $B_r(v)$ includes any of the quadrilaterals where two of the zones cross each
other; then each end of each zone corresponds to a boundary edge of $B_r(v)$, and the cyclic
ordering of the edges around the boundary of $B_r(v)$ determines the cyclic ordering of the
ends of zones. It is straightforward to verify by induction on $r$ that this ordering remains
unchanged for larger radii. From the cyclic ordering, we define an arrangement $\mathcal A$
as follows: order the zones of $G$ arbitrarily (e.g., by the closest distance of an edge of
the zone from some fixed vertex $v$), and choose endpoints on the unit circle of the Klein
model for each zone in this order. For the first zone, choose two diametrally opposite endpoints,
and then subsequently place each zone's endpoints consistently with the cyclic order, midway
between the endpoints of its two neighbors in the cyclic order. The result is a hyperbolic
line arrangement $\mathcal A$ with one line per zone in which the cyclic order of the infinite
ends of the hyperbolic lines is the same as the cyclic order of ends of zones. Thus, both the
splits of $G$ and the halfspaces of $\mathcal A$ form isomorphic copies of a circular split
system $\mathcal S$. This implies that two lines of $\mathcal A$ intersect
if and only if they represent two crossing zones of $G.$ Moreover, if we pick any line  $l$ of $\mathcal A$
and consider the lines of $\mathcal A$ intersecting $l,$ then they intersect $l$ in the same
order as the zone represented by $l$ is crossed by the zones represented by these lines.

We observe next that $\mathcal A$ is triangle-free. For, if $\mathcal A$ contained a triangle,
the corresponding three zones of $G$ would have to cross pairwise. But then the cycle formed
by connecting the paths on the outsides of the ladders formed by the three zones would surround
a subgraph of $G$ that is (by local finiteness and Lemma~\ref{4corners}) a finite squaregraph,
but that has only three degree-two boundary vertices (the three corners where the zones cross),
violating Lemma~\ref{4corners}.

For any vertex $v$ of $G,$ the ladder structure of zones and their convexity imply that
two zones incident to $v$ cross in $G$ if and only if they share a common inner face of $G$ incident to $v.$
Therefore, if we traverse synchronously in counterclockwise order the neighbors $w$ of $v$  and the lines
of $\mathcal A$ defined by the zones $Z(v,w)$ (by passing from one line to the next one at their intersection
point), we will traverse a convex region $R_v$ of the unit disk which is the intersection of the halfspaces  to
the left of the traversed lines (which correspond to the halfspaces of $G$ containing $v$; in fact, the intersection
of these halfspaces of $G$ is exactly $v$). We assert that $R_v$
is a face of the arrangement $\mathcal A$. Suppose by way of contradiction that $R_v$ is crossed by a line $l$ of
$\mathcal A.$ Since $\mathcal A$
is triangle-free, either $R_v$ has two supporting lines $l',l''$ of $\mathcal A$ located on both sides of $l$ or all
supporting lines of $R_v$ are located on one side of $l$ except the line $m'$ crossed by $l.$  In the first case we conclude that the zones of
$l'$ and $l''$ belong to
different halfspaces of $G$ defined by the zone of $l.$ This is possible only if this zone is incident to $v,$ i.e., if $l$ is a supporting
line of $R_v.$ In the second case, let $m''$ be the supporting line of $R_v$ intersecting $m'.$ Then the zone represented by $m''$ will separate
the zone represented by $l$ from all zones incident to $v$ except the zone represented by $m'.$ But then the ends of all those zones will define a
cyclic order different from the cyclic order of the ends of the lines representing these zones (i.e., $l$ and the supporting lines of $R_v$).
Hence for each vertex $v$ of $G,$ the region $R_v$  is indeed a face of $\mathcal A$ bordered by finitely many lines of $\mathcal A$, namely, the
lines of $\mathcal A$ representing the zones of $G$  incident to $v.$ The line $l$ representing the zone $Z(v,w)$  borders also the
face $R_w.$ Since two vertices $u$ and $v$ are adjacent in $G$
if and only if there exists a unique zone such that $u$ and $v$
belong to different halfspaces defined by this zone, from the definition of the regions $R_u$ and $R_v$ we conclude that they are separated
by a single line of $\mathcal A$ if and only if  $u$ and $v$ are adjacent. This shows that $G$ is a connected component of the dual graph of
$\mathcal A.$  If the arrangement $\mathcal A$ is locally finite, then  Lemma \ref{lem:locally-finite-line-arrangement} implies that the dual graph
of $\mathcal A$ is connected and therefore it coincides with $G.$ Finally, notice that $G$ is a median graph, as the median of any three vertices $a$, $b$, and $c$ may
be found within the finite squaregraph $B_r(a)$ (where $r={\max\{d(a,b),d(a,c)\}}$) and we have
already seen that finite squaregraphs are median graphs.

%(iv) $\Leftrightarrow$ (v)  This follows from Lemma \ref{lem:locally-finite-triangle-free-arrangement},
%where finite vertex degrees are obviously equivalent to boundedness by finitely many lines of the arrangement.
%This concludes the proof of Theorem~\ref{theorem:infinite}.

(iv) $\Rightarrow$ (i): We are given a  median graph $G$ with finite vertex degrees which is a connected component of the dual graph of
a triangle-free hyperbolic line arrangement $\mathcal A$ satisfying the condition (iv) and
we assert that $G$ has a locally finite squaregraph embedding. Each vertex $v$ of $G$ is represented in $\mathcal A$ by a
convex cell $R_v$ which is bordered by finitely many lines of $\mathcal A$.  We place a dual vertex $v$ within each such cell $R_v,$
choosing a point on each segment of the arrangement where the dual edge is to cross, and connecting
each vertex by line segments to the chosen point on each adjacent segment. Thus, each edge is
represented by a curve formed by two line segments. Each face of the dual contains part of some
line of the arrangement, and each arrangement line is crossed by a sequence of dual edges that
separate each of the crossings on the line from each other, and that separate the crossings
from infinity. Therefore, either a face contains an infinite ray from one or more of the
lines, and is unbounded, or contains a single crossing of two lines and the four dual edges
surrounding that crossing, and is a quadrilateral. This concludes the proof of Theorem~\ref{theorem:infinite}.

\bigskip
We observe that the argument that (iv) $\Rightarrow$ (i) generalizes in a straightforward way to weak pseudoline arrangements (for a definition of such an arrangement for finitely many pseudolines, see \cite[Section 11.4]{EppFaOv}; the definition extends in an obvious way to locally finite arrangements). Therefore, pseudoline arrangements carry no added generality over hyperbolic line arrangements in defining infinite squaregraphs.

\section{Proofs for Section~\ref{sec:tree-products}}
\label{proofs:tree-products}

The first lemma of this section, which characterizes the absence of $2\times 2$ grids in an arbitrary median graph,
covers the case $n = 4$ of Theorems 1 and 2 of \cite{KlKo}.

\begin{lemma} \label{2x2} For a median graph $G=(V,E),$ the following conditions are equivalent:
\begin{itemize}
\item[(i)] $G$ contains a convex $2\times 2$ grid $K_{1,2}\Box K_{1,2};$
\item[(ii)] the incompatibility graph ${\mathrm I}{\mathrm n}{\mathrm c}({\mathcal S}(G))$ includes an induced
4-cycle;
\item[(iii)] the $2\times 2$ grid is a median homomorphic
image of $G;$
\item[(iv)] $G$ includes a median subalgebra $H$ isomorphic
to the $2\times 2$ grid that cannot be expanded to a median
subalgebra isomorphic to $K_{1,2}\Box C_4.$
\end{itemize}
\end{lemma}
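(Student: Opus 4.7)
My strategy is to establish (i) $\Rightarrow$ (ii), (ii) $\Leftrightarrow$ (iii), (ii) $\Rightarrow$ (i), and the separate equivalence (i) $\Leftrightarrow$ (iv). For (i) $\Rightarrow$ (ii), the four $\Theta$-classes of a convex $F \cong K_{1,2}\Box K_{1,2}$ (two ``vertical'' $V_1, V_2$ and two ``horizontal'' $H_1, H_2$) extend to four distinct convex splits of $G$. Incompatibility of each vertical with each horizontal is witnessed by the 4-cycles of $F$; compatibility of $V_1$ with $V_2$ in $G$ (and symmetrically of $H_1, H_2$) follows from the gate map $g: G \to F$: for any $w \in G$ on the ``column 1'' side of $V_1$, the gate $g(w)$ lies in column 1 of $F$, hence in columns 1--2, placing $w$ on the ``columns 1--2'' side of $V_2$. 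For (ii) $\Rightarrow$ (iii), four splits whose incompatibility graph induces $C_4$ have (after labeling) nested halfspaces $A_1 \subsetneq A_2$ and $C_1 \subsetneq C_2$; the nine resulting cells of $V(G)$ are convex, and all are non-empty by pairwise non-emptiness from $V_i$-$H_j$ incompatibility combined with the Helly property (Helly number 2) for convex sets in a median graph. The cell-projection map is then a surjective median homomorphism onto $P_3 \Box P_3$, while (iii) $\Rightarrow$ (ii) is immediate by pulling back the four $\Theta$-classes of $P_3 \Box P_3$.

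For (ii) $\Rightarrow$ (i), I use the nine non-empty convex cells. Pick $v_{22}$ in the central cell, and for each $(i,j) \neq (2,2)$ let $v_{ij}$ be the gate of $v_{22}$ in the convex cell $(i,j)$. Because gate maps onto convex sets are median homomorphisms that preserve membership in halfspaces of the four chosen $\Theta$-classes, and because the cells are separated only by these four, an inductive argument tracking halfspace membership under iterated gating yields $d_G(v_{ij}, v_{i'j'}) = |i - i'| + |j - j'|$ for all pairs; the nine vertices $\{v_{ij}\}$ therefore induce a convex $K_{1,2}\Box K_{1,2}$ subgraph of $G$.

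For (i) $\Rightarrow$ (iv), the vertex set $H$ of a convex $K_{1,2}\Box K_{1,2}$ subgraph is trivially a $2 \times 2$ grid median subalgebra; an extension $H \cup \{w_1, w_2, w_3\} \cong K_{1,2}\Box C_4$ would force $w_i \in I_G(v_{i,1}, v_{i,3}) \subseteq H$ (since the ``fourth'' vertex $\delta$ of the $C_4$-factor satisfies $m_{C_4}(\delta, \alpha, \gamma) = \delta$ where $\alpha, \beta, \gamma$ are the embedded images of the $P_3$-factor), contradicting $w_i \notin H$. For (iv) $\Rightarrow$ (i), given a non-extendable $H$, I consider its convex hull $C = \mathrm{conv}_G(H)$, itself a median subgraph of $G$. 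By Mulder's unique prime Cartesian factorization of finite median graphs, $C$ decomposes as a product of prime median graphs; a non-path prime factor of $C$ --- in particular any $K_2 \Box K_2 = C_4$ sub-factor arising from two $K_2$ factors --- would allow extending the projection of $H$ onto that factor from a $P_3$-subalgebra to a $C_4$-subalgebra, providing the forbidden $K_{1,2}\Box C_4$-extension in $G \supseteq C$. Hence $C$ must be a product of two paths $P_{a_1} \Box P_{a_2}$, and any axis-aligned $3 \times 3$ sub-rectangle of $C$ containing the nine vertices of $H$ is a convex $K_{1,2}\Box K_{1,2}$ subgraph of $G$. The main obstacle will be this prime-factorization analysis --- specifically, verifying that any non-path prime factor of $\mathrm{conv}_G(H)$ actually yields a $K_{1,2}\Box C_4$-extension of $H$ compatible with the remaining coordinate directions --- and I also expect the gate-based distance computation in (ii) $\Rightarrow$ (i) to require careful use of gate-map composition in median graphs.
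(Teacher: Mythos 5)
Your steps (i)$\Rightarrow$(ii), (ii)$\Leftrightarrow$(iii) and (i)$\Rightarrow$(iv) are sound (the Helly-number-2 justification that all nine cells are nonempty is in fact a cleaner account of surjectivity than the paper's). The difficulty is that neither of your two routes back to (i) works, so the cycle of equivalences does not close. In (ii)$\Rightarrow$(i) you claim that the gates $v_{ij}$ of a central-cell vertex $v_{22}$ satisfy $d_G(v_{ij},v_{i'j'})=|i-i'|+|j-j'|$ because ``the cells are separated only by these four'' splits. That is false: the four splits determine the partition into cells, but two vertices in different (or the same) cells are in general separated by many further convex splits, so the cells may be far apart and have large diameter. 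Concretely, take $G=P_5\Box P_5$ with the four splits $\{x\le 1\}$, $\{x\le 4\}$, $\{y\le 1\}$, $\{y\le 4\}$; these induce a $4$-cycle in $\mathrm{Inc}({\mathcal S}(G))$, the central cell is the middle $3\times 3$ block, and the nine gates of $(3,3)$ are exactly $\{1,3,5\}\times\{1,3,5\}$, pairwise at distances $2$ and $4$. They form a median subalgebra isomorphic to the grid, but not a convex $K_{1,2}\Box K_{1,2}$ subgraph (their convex hull is all of $G$). This is precisely why the paper aims first only at the subalgebra statement (iv) --- via mutual gates of two diagonal corner cells, a metric rectangle, and a vertex of the median set of its four corners --- and only afterwards, in a separate step (iv)$\Rightarrow$(i), contracts the possibly blown-up grid to a genuine one by replacing $v_1,v_2,v_1',v_2'$ with neighbours of the central vertex $w$ on the corresponding geodesics and closing up under medians.

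Your (iv)$\Rightarrow$(i) does not repair this. Beyond the prime-factorization analysis you yourself flag as unresolved, the intended conclusion is wrong on two counts: the convex hull of a non-expandable grid subalgebra need not be a product of exactly two paths, and even when it is $P_{a_1}\Box P_{a_2}$ there need be no $3\times 3$ subrectangle containing the nine vertices of $H$ (again $H=\{1,3,5\}^2$ inside $P_5\Box P_5$ is non-expandable, its convex hull is the whole $5\times 5$ grid, and no $3\times 3$ block contains $H$). The information you need from (iv) is local, around the centre of $H$; analysing the global structure of $\mathrm{conv}_G(H)$ is a detour that does not deliver the convex unit grid. As written, the implication (ii)/(iii)/(iv)$\Rightarrow$(i) is unproved.
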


\begin{proof} We will verify the implications (i)$\Rightarrow$(ii)$\Rightarrow$(iii)$\Rightarrow$(iv)$\Rightarrow$(i).
%\begin{description}

(i)$\Rightarrow$(ii): If $G$ contains a convex $2\times 2$ grid,
then the four convex splits of $G$ separating this convex grid
evidently form an induced 4-cycle in Inc$({\mathcal S}(G)).$

(ii)$\Rightarrow$(iii): Assume that there is some induced 4-cycle in Inc$({\mathcal S}(G)).$
Then  opposite vertices in this cycle constitute a compatible pair
of splits. Either compatible pair of splits gives rise to a median
homomorphism $\varphi_i$ $(i=1,2)$ onto the 2-path $K_{1,2}$. These
maps factor through a median homomorphism $\varphi$  into the
Cartesian product of these two 2-paths. If $\varphi$ was not
surjective, then necessarily not all four incompatibilities between
the four splits could be maintained.

\begin{figure}[h]
\centering\includegraphics[width=2.5in]{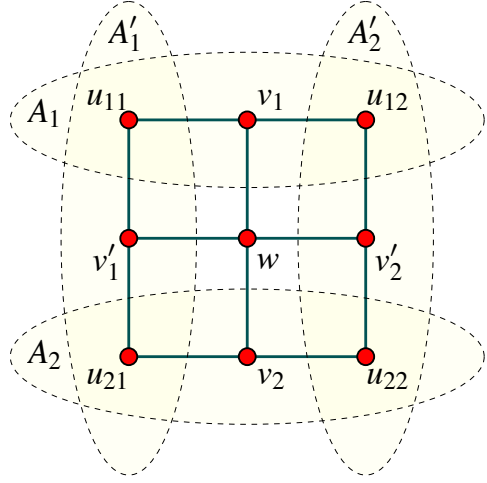}
\caption{To the proof of Lemma~\ref{2x2}.}
\label{fig:2x2}
\end{figure}

(iii)$\Rightarrow$(iv): Next assume that some median homomorphism $\varphi$ maps $G$ onto
the $2\times 2$ grid. Let $\{ A_1,B_1\}$ and $\{ A_2,B_2\}$ be the
pre-images under $\varphi$  of one compatible pair of convex splits
from the grid, and let $\{ A'_1,B'_1\}$ and $\{A'_2,B'_2\}$ be the
pre-images of the other pair, where we may assume that $A_1\cap A_2$
and $A'_1\cap A'_2$ are empty. Those four convex splits of $G$
partition $G$ into nine convex sets; see Figure~\ref{fig:2x2}. Assume that the
distance between  $A_1\cap A'_1$ and $A_2\cap A'_2$ does not exceed
the distance $q$ between $A_1\cap A'_2$ and $A_2\cap A'_1$. Let
$u_{11}$ and $u_{22}$ be mutual gates for the disjoint convex sets
$A_1\cap A'_1$ and $A_2\cap A'_2$  \cite{DrSch}. Then take the gate
$u_{21}$ of $u_{11}$ in $A_2$ and the gate $u_{12}$ of $u_{22}$ in
$A_1.$ Then both $u_{12}$ and $u_{21}$ belong to $I(u_{11},u_{22}).$
Hence the distance between $u_{12}$ of $u_{21}$ must equal $q,$ and
therefore $u_{11}, u_{12}, u_{22},$ and $u_{21}$ form a metric
rectangle with perimeter $2q.$ Consider the ``median set'' $M$ of
vertices of $G$ with minimum distance sum to the four corners of
this rectangle. Then each of the eight halfspaces of $G$ under
consideration includes exactly two corner points, whence the
nonempty intersection $B_1\cap B_2\cap B'_1\cap B'_2$ contains some
vertex $w$ from $M$ \cite{Ba_retract}. Let $v_1, v_2, v'_1$, and
$v'_2$ be the gates of $w$ in $A_1, A_2, A'_1,$ and $A'_2,$
respectively. A straightforward calculation shows that the distance
sum from $w$ to the four corner points is bounded from below by
$2q.$ Since every corner has distance sum $2q$ to the other corner
points, equality holds. Then the nine constructed vertices give rise
to nine overlapping metric rectangles and hence form a median
subalgebra $H$ isomorphic to the $2\times 2$ grid. If $H$ expanded
to a median subalgebra isomorphic to $K_{1,2}\Box C_4,$ then
compatibility of either $\{ A_1,B_1\}$ and $\{ A_2,B_2\}$ or
$\{A'_1,B'_1\}$ and $\{ A'_2,B'_2\}$ would be violated.

(iv)$\Rightarrow$(i):
Assume that $G$ includes some median subalgebra $H$ isomorphic to
the $2\times 2$ grid, with vertices denoted as in Figure~\ref{fig:2x2}, say.
Then select neighbors $x_1, x_2, x'_1,$ and $x'_2$ of $w$ on
shortest paths from the central vertex $w$ to $v_1, v_2, v'_1,$ and
$v'_2,$ respectively. The median $y_{11}$ of $x_1, x'_1$, and
$u_{11}$ is adjacent to $x_1$ and $x'_1,$ thus producing a 4-cycle.
In a clockwise fashion the other three squares are obtained,
altogether yielding a median subgraph isomorphic to the $2\times 2$
grid.
\end{proof}

In the case of cube-free median graph $G$ ``convex'' can be
substituted by ``induced'' in condition (i) of the preceding lemma,
because induced $2\times 2$ grids are necessarily convex in $G.$ For
a similar reason, a median subalgebra $H$ of $G$ isomorphic to the
$2\times 2$ grid cannot be expanded to one that is isomorphic to
$K_{1,2}\Box C_4.$

\begin{lemma}\label{2trees} For a finite squaregraph $G,$ the following conditions are equivalent:
\begin{itemize}
\item[(i)] $G$ can be embedded into the Cartesian product of two trees;
\item[(ii)] Inc$({\mathcal S}(G))$ is bipartite;
\item[(iii)] $G$ does not contain an induced (or isometric, or convex) odd cogwheel;
\item[(iv)] every inner vertex of $G$ has even degree.
\end{itemize}
\end{lemma}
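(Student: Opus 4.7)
The plan is to route the equivalences through the central pair $(ii)\Leftrightarrow(iv)$, which carries the cyclic-topology content of the lemma, while $(i)\Leftrightarrow(ii)$ and $(iii)\Leftrightarrow(iv)$ are routine by comparison. First, $(i)\Leftrightarrow(ii)$ is the Bandelt--van de Vel correspondence recalled in Section~\ref{sec:tree-products}: an isometric embedding of a median graph into a Cartesian product of $k$ trees is the same datum as a $k$-coloring of $\mathrm{Inc}({\mathcal S}(G))$, and $2$-colorability is bipartiteness.

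For $(iii)\Leftrightarrow(iv)$ I would argue locally at a single vertex. If some inner vertex $v$ of $G$ has odd degree $k\ge 5$, then the closed rim $R[v]$ (built from $v$, its neighbors $u_1,\ldots,u_k$ in the cyclic order of the embedding, and the opposite corners $w_i$ of the inner faces $v,u_i,w_i,u_{i+1}$) is an induced $k$-cogwheel, which is odd. Conversely, suppose $G$ contains an induced odd $k$-cogwheel $H$ with center $c$; each cog of $H$ is a $4$-cycle of $G$ and hence an inner face of $G$ by Lemma~\ref{4corners}. As the $k$ cogs are all inner faces incident to $c$, the cyclic order of the embedding forces the spokes of $H$ to exhaust the neighbors of $c$ and the cogs to fill all angular sectors at $c$; thus $c$ is an inner vertex of $G$ of odd degree $k$, violating (iv).

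For $(ii)\Rightarrow(iv)$, at an inner vertex $v$ of degree $k$ the splits $\sigma_1,\ldots,\sigma_k$ of the $k$ incident edges are pairwise distinct (two edges at a common vertex cannot lie in one zone, by the ladder structure of Lemma~\ref{strip}), and any two cyclically consecutive splits $\sigma_i,\sigma_{i+1}$ cross at the inner face through $v,u_i,w_i,u_{i+1}$ and are therefore incompatible. This exhibits a cycle of length $k$ in $\mathrm{Inc}({\mathcal S}(G))$; if $k$ were odd, the incompatibility graph would contain an odd cycle and fail to be bipartite.

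The main obstacle is $(iv)\Rightarrow(ii)$, which I would attack topologically on the triangle-free chord diagram $\mathcal D$ dual to $G$ constructed in Section~\ref{sec:line-arrangements}. Under this duality, chords of $\mathcal D$ correspond to convex splits of $G$, crossings of chords to inner faces, and inner cells of $\mathcal D$ to inner vertices of $G$ with the boundary chord-segment count equal to the vertex degree; so (iv) becomes the statement that every inner cell of $\mathcal D$ has an even number of sides. Assuming for contradiction that $\mathrm{Inc}({\mathcal S}(G))$ contains an odd cycle, I would fix a shortest one $c_1,\ldots,c_{2m+1}$ and form the closed polygonal curve $\gamma$ whose $i$-th segment runs along $c_{i+1}$ from $p_i=c_i\cap c_{i+1}$ to $p_{i+1}=c_{i+1}\cap c_{i+2}$. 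An Inc-chord of the cycle would split it into a strictly shorter odd cycle and a shorter even cycle, so minimality rules out such chords; hence the segments of $\gamma$ meet only at the $p_i$, and $\gamma$ is a simple curve lying in the open disk because its segments lie in chord interiors. The region $R$ bounded by $\gamma$ is then a union of entire inner cells of $\mathcal D$, and a double-count of chord-segment incidences inside $R$ yields
\[
\sum_{F\subseteq R}\#\bigl(\partial F\cap\text{chords}\bigr)\;=\;(2m+1)\;+\;2\cdot\#(\text{chord segments interior to }R),
\]
whose left-hand side is even by (iv) while the right-hand side is odd, the desired contradiction. The delicate point, where I expect the most care to be needed, is verifying that minimality of the odd cycle really forces $\gamma$ to be simple, so that $R$ is an honest union of cells and the parity bookkeeping goes through.
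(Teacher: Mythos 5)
Your proposal follows the paper's skeleton for three of the four links --- (i)$\Leftrightarrow$(ii) via the Bandelt--van de Vel correspondence, the local rim analysis for (iii)$\Leftrightarrow$(iv), and the odd closed walk in Inc$({\mathcal S}(G))$ produced by an odd-degree inner vertex --- but for the critical direction (iv)$\Rightarrow$(ii) you take a genuinely different route. The paper selects an odd cycle $C$ of Inc$({\mathcal S}(G))$ whose polygonal line $L(C)$ in the chord diagram has minimum \emph{perimeter}, observes that the enclosed region must contain two adjacent inner vertices (a single enclosed cell would be dual to an inner vertex of odd degree $2m+1$, which (iv) forbids), and uses the chord of the zone separating them to split $C$ into two cycles with smaller perimeter, one of them odd --- a descent argument. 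You instead take a \emph{shortest} odd cycle, so that chordlessness makes $\gamma$ a simple closed curve, and finish with a one-shot parity double-count over the cells enclosed by $\gamma$. Your finish is arguably cleaner, but one step of the bookkeeping is off as written: the boundary contribution to your displayed identity is the number of arrangement edges lying on $\gamma$, and this equals $2m+1$ only when no chord outside the cycle crosses $\gamma$. Minimality of the cycle length does not exclude such crossings (an external chord meeting two cycle chords need not create a shorter odd cycle), and each such crossing subdivides a macro-segment of $\gamma$ into further arrangement edges. The fix is one extra observation: any chord not in the cycle has both endpoints on the unit circle, hence outside the region $R$, and (since triangle-freeness forbids three concurrent chords) meets $\gamma$ transversally away from the corners $p_i$; therefore it crosses the simple closed curve $\gamma$ an even number of times, so the total number of arrangement edges on $\gamma$ is still congruent to $2m+1$ modulo $2$, i.e.\ odd, and the contradiction with the even left-hand side survives. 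With that sentence added (and the routine check that the $p_i$ are pairwise distinct, again by triangle-freeness), your argument is complete and constitutes a valid alternative to the paper's descent.
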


\begin{proof} Conditions (i) and (ii) are equivalent according to \cite{BaVdV}. Further, (iii) and 	
(iv) are also equivalent. If $G$ includes some odd $k$-cogwheel, then Inc$({\mathcal S}(G))$ 	
contains an induced odd $k$-cycle, thus proving (ii)$\Rightarrow$(iii). To prove the converse 	
we may assume that $G$ is 2-connected. If $G$ has some inner vertex $w$ of odd 	degree $k,$
then Inc$({\mathcal S}(R[w]))$ is an odd $k$-cycle. Suppose by way of contradiction  that all
inner vertices of $G$ have even degrees but Inc$({\mathcal S}(G))$ 	contains an induced
odd cycle $C$. Consider a chord diagram representation of the squaregraph  $G$ (within the unit disk) provided by
Theorem~\ref{theorem:infinite}. The chords representing two incompatible splits (i.e., adjacent vertices) from $C$ intersect
in a single point and each such chord has two such intersection points. The segments having these
points as endpoints together define a closed nonintersecting
polygonal line $L(C)$.  Suppose that among all odd cycles of
Inc$({\mathcal S}(G))$, the closed polygonal line $L(C)$ of the selected cycle $C$ has the smallest perimeter.
Since $G$ does not contain odd wheels, the region of the plane bounded by $L(C)$ contains two adjacent inner vertices $u,v$ of $G.$
The chord representing the zone
$Z(uv)$ crosses the chords representing two zones that participate in the cycle $C.$ As a result, that chord (separating $uv$) together
with $C$ gives rise to two cycles $C_1$ and $C_2$ of  Inc$({\mathcal S}(G))$ for which both $L(C_1)$ and $L(C_2)$ have smaller perimeter than $L(C).$
Since one of these cycles is odd, we obtain a contradiction with the choice of the cycle $C.$
\end{proof}

\bigskip\noindent
{\bf Proof of Theorem~\ref{theorem:five_trees}:} We have mentioned in Section~\ref{sec:smallgen}
that the
incompatibility graph Inc$({\mathcal S}(G))$  of convex splits of a
squaregraph $G$ is triangle-free and it is isomorphic to the
incompatibility graph Inc$({\mathcal S}(G)|_X)$ of traces of convex
splits on any median-generating subset $X$ of $C.$ By Theorem~\ref{theorem:infinite},
Inc$({\mathcal S}(G)|_X)$ is isomorphic to the
intersection graph of chords in the circle representation of
${\mathcal S}(G)|_X.$  By a result of \cite{Ko}, a triangle-free
circle graph is 5-colorable, thus the graph Inc$({\mathcal S}(G))$
is 5-colorable as well. Therefore, by a result of \cite{BaVdV}, the
squaregraph $G$ is isometrically embeddable into the Cartesian
product of 5 trees. If a squaregraph $G$ does not contain induced
$2\times 2$ grids, then by Lemma \ref{2x2} the graph Inc$({\mathcal S}(G))$ (and
therefore the underlying circle graph) is $C_4$-free. By a result of
\cite{Ag1}, Inc$({\mathcal S}(G))$ is 3-colorable, whence $G$ is
embeddable into the product of 3 trees. Lemma \ref{2trees} covers the case where $G$
can be embedded into the product of 2 trees.

To complete the proof, assume that $G_0$ is a 2-connected squaregraph with $r$ vertices of
degree $>5$ such that $G_0$ cannot be embedded into the Cartesian
product of four trees. We will construct a sequence $G_0,\ldots,G_r$
of squaregraphs preserving 2-connectivity and this embedding
property such that each $G_i$ is a median homomorphic image of
$G_{i+1}$ $(i=0,\ldots,r-1)$ and has at most $r-i$ vertices of
degree $>5.$ Having constructed $G_i,$ assume that $u$ is a vertex
of degree $h>5.$ If $u$ is an inner vertex, then we can select a
convex 2-path $P_u$ in the closed rim $R[u]$ such that the degrees of
$u$ in the two components of $R(u)\setminus P_u$ are $<h-3.$ If $u$ is on the
boundary, one can select a neighbor $v$ of $u$ such that $P_u=uv$
has the analogous property for cogfan  $R[u].$ In either case,
extend $P_u$ to a minimal convex path $P$ that separates $G_i.$ Then
$G_i$ is the amalgam of smaller squaregraphs $G'_i$ and $G''_i$ along $P.$ In either constituent the degree of $u$ is $<h-1.$ Now,
expand $G_i$ along $P$ by introducing a new zone. In the expansion
$G_{i+1}$ the degrees of vertices do not grow when passing from
$G_i$ to $G_{i+1}$. In the case of $u,$ the degree is $<h.$ Thus,
after at most $r$ expansion steps, no vertices of degrees $>5$ are
left.

\bigskip\noindent
{\bf Proof of Theorem~\ref{theorem:five_trees_infinite}:} Let $G$  be a median graph not containing
any induced cube, $K_2\Box K_{1,3},$ or suspended cogwheel. To prove
that $G$ is isometrically embeddable into the Cartesian product of
at most five trees it suffices to show that the incompatibility
graph Inc$({\mathcal S}(G))$  of convex splits of $G$ is
5-colorable. According to a result of De Bruijn and Erd\"os (see
Theorem 1 of the book by Jensen and Toft \cite{JeTo}) an infinite
graph is $k$-colorable if and only if all its finite subgraphs are
$k$-colorable. Consider any finite subgraph $H$ of Inc$({\mathcal
S}(G)).$ For each pair $\sigma_1=\{ A_1,B_1\}$ and $\sigma_2=\{
A_2,B_2\}$ of incompatible splits of $H$ pick a vertex  in each of
the four nonempty intersections $A_1\cap A_2,A_1\cap B_2,B_1\cap
A_2,$ and $B_1\cap B_2.$ Denote by $G'$ the finite median graph
induced by the convex hull of the selected vertices.  Since $G'$ is
a subgraph of $G,$ it does not contain any induced cube, $K_2\Box
K_{1,3},$ or suspended cogwheel. According to Lemma \ref{amalgam},
$G'$ is a finite squaregraph. Theorem~\ref{theorem:five_trees} implies that Inc$({\mathcal
S}(G'))$ is 5-colorable. Since $H$ is an induced subgraph of
Inc$({\mathcal S}(G')),$ we deduce that $H$ is 5-colorable as well.
The corresponding assertions for 3- or 2-colorabilty follow similarly
from 	Theorem~\ref{theorem:five_trees} together with Lemmas \ref{2x2} and \ref{2trees}.
This establishes Theorem~\ref{theorem:five_trees_infinite}.

\section{\'{E}pilogue}
\label{sec:conclusion}

Two-connected squaregraphs constitute an interesting class of planar
graphs with a rich structural theory. In particular, the
coordinatization with no more than five trees makes a number of algorithmic
problems particularly tractable for them.
Another equally attractive feature is that any finite 2-connected squaregraph $G$ is fully determined by the metric on its boundary cycle $C$, from which it is obtained as the minimal extension of $C$ to an absolute retract of bipartite graphs, which effectively constitutes a coordinatization by paths with respect to the supremum norm. Now, it is natural to ask
how the injective hull of a 2-connected squaregraph $G$ with
boundary cycle $C$ looks like and how it is determined. Necessarily,
it includes the vertex set of $G$ and must turn every 4-cycle into a
solid square, that is, it encompasses the geometric realization of
$G$ \cite{BaVdV}. It is then not difficult to see that this
geometric realization is in fact injective and thus constitutes the
injective hull (alias tight span) $T(C,d)$ of the boundary cycle $C$
with respect to the distances in $G.$ This also follows directly
from Theorem 1 of \cite{DrHuMo_Bune}. Within this injective hull,
the (at most five) tree factors extend to solid trees (i.e.,
dendrons) and the discrete boundary cycle extends to the solid
boundary circle. The median hull of the boundary circle yields the
whole space $T(C,d).$ In this way, 2-connected squaregraphs arise as
the 1-skeletons of particular Manhattan orbifolds \cite{Epp_orbi}.
The passage from $C$ to $G$ and to the square complex $T(C,d)$ can
be investigated in a more general framework, and this will be the
topic of subsequent papers.

\section*{Acknowledgements}

Work of V. Chepoi was supported in part by the ANR grant BLAN06-1-138894 (projet
OPTICOMB). Work of D. Eppstein was supported in part by NSF grant 0830403 and by the
Office of Naval Research under grant N00014-08-1-1015.

\raggedright
\bibliographystyle{amsplain}
\bibliography{squaregraphs}

\end{document}